\newcommand{\norm}[1]{\ensuremath{\left\|#1\right\|}}
\newcommand{\tnorm}[1]{{\left\vert\kern-0.25ex\left\vert\kern-0.25ex\left\vert #1 
		\right\vert\kern-0.25ex\right\vert\kern-0.25ex\right\vert}}
\newcommand{\avg}[1]{\{ \hspace{-0.1cm}\{#1\} \hspace{-0.1cm}\}}
\newcommand{\jump}[1]{\ensuremath{[\![#1]\!]} }
\crefname{hypothesis}{Hypothesis}{Hypotheses}
\crefname{ALC@unique}{Line}{Lines}
\DeclareMathOperator*{\esssup}{ess\,sup}
\colorlet{texcscolor}{blue!50!black}
\colorlet{texemcolor}{red!70!black}
\colorlet{texpreamble}{red!70!black}
\colorlet{codebackground}{black!25!white!25}
\lstdefinestyle{siamlatex}{%
	style=tcblatex,
	texcsstyle=*\color{texcscolor},
	texcsstyle=[2]\color{texemcolor},
	keywordstyle=[2]\color{texemcolor},
	moretexcs={cref,cref,maketitle,mathcal,text,headers,email,url},
}
\DeclareTotalTCBox{\code}{ v O{} }
{ 
	fontupper=\ttfamily\color{black},
	nobeforeafter,
	tcbox raise base,
	colback=codebackground,colframe=white,
	top=0pt,bottom=0pt,left=0mm,right=0mm,
	leftrule=0pt,rightrule=0pt,toprule=0mm,bottomrule=0mm,
	boxsep=0.5mm,
	#2}{#1}
		\newcommand*{\bdiv}{%
			\nonscript\mskip-\medmuskip\mkern5mu%
			\mathbin{\operator@font div}\penalty900\mkern5mu%
			\nonscript\mskip-\medmuskip
		}
\patchcmd\newpage{\vfil}{}{}{}
	\title{Non-conforming structure preserving finite element method for doubly diffusive flows on bounded Lipschitz domains \thanks{\funding{This work was supported by the SERB-CRG India (Grant Number : CRG/2021/002569).}}
	}
\author{Jai Tushar\thanks{Center for Computation and Technology, Louisiana State University, Baton Rouge, USA 
		(\email{jai.tushar@lsu.edu}).}
	\and Arbaz Khan\thanks{Department of Mathematics, Indian Institute of Technology, Roorkee, India 
		(\email{arbaz@ma.iitr.ac.in}, \email{maniltmohan@ma.iitr.ac.in}).}
	\and Manil T. Mohan\footnotemark[3]}
\begin{document}
\maketitle
\date{\today}
\begin{abstract}
	We study a stationary model of doubly diffusive flows with temperature-dependent viscosity on bounded Lipschitz domains in two and three dimensions. A new well-posedness and regularity analysis of weak solutions under minimal assumptions on domain geometry and data regularity are established. A fully non-conforming finite element method based on Crouzeix-Raviart elements, which ensures locally exactly divergence-free velocity fields is explored. Unlike previously proposed schemes, this discretization enables to establish uniqueness of the discrete solutions. We prove the well-posedness of the discrete problem and derive a priori error estimates. An accuracy test is conducted to verify the theoretical error decay rates  in flow, Stokes and Darcy regimes on convex and non-convex domains, and a benchmark test of flow in a porous cavity is conducted, comparing the proposed method with existing literature.
\end{abstract}

\begin{keywords} Doubly diffusive convection, weak solution, regularity, mixed finite element method, {\it apriori} error analysis\end{keywords}
\begin{AMS} 
	65N30, 76S05, 35B65  
\end{AMS}

\section{Introduction}\label{Intro}
Doubly diffusive flows describe single or multiphase fluid flows driven by two different density gradients with varying rates of diffusion and are important in understanding the evolution of various natural and artificial systems. For example, when particles settle through a stable temperature or salinity gradient, they can drive an instability known as sedimentary fingering convection, which explains salt fingering occurrences responsible for mixing and transporting nutrients and salts in the ocean and has significant implications in maintaining Earth’s climate (see for reference \cite{MR3621138, oschlies2003salt}). Such flows are also observed in applications such as petroleum extraction, crystal growth, food processing and many others (see \cite{balla2015soret, MR3342441, MR1004641}). In this article, we study a stationary model of such flows governed by a nonlinear system coupling the incompressible Navier-Stokes equations with temperature and concentration-dependent advection-diffusion equations:
\begin{align}\label{P:GE}
	\left\{
	\begin{aligned}
		\boldsymbol{K}^{-1} \boldsymbol{u} + (\boldsymbol{u} \cdot \nabla) \boldsymbol{u} - \boldsymbol{\bdiv}(\nu(T) \nabla \boldsymbol{u}) + \nabla p &= \boldsymbol{F}(\boldsymbol{y}) \;\; \mbox{in} \;\; \Omega, \\
		\bdiv\hspace{0.04cm}\boldsymbol{u} &= 0 \;\; \mbox{in} \;\; \Omega, \\
		-\boldsymbol{\bdiv}\hspace{-0.1cm}(\boldsymbol{D} \nabla \boldsymbol{y}) + (\boldsymbol{u} \cdot \nabla) \boldsymbol{y} &= 0 \;\; \mbox{in} \;\; \Omega, \\
		\boldsymbol{y} = \boldsymbol{y}^D, \;\; \boldsymbol{u} &= \boldsymbol{0} \;\; \mbox{on} \;\; \Gamma.
	\end{aligned}
	\right.
\end{align}
Here $\boldsymbol{u}$ denotes the fluid velocity, $p$ stands for the pressure field, $\boldsymbol{y}:= (T, S)^\top$, $S$ represents the concentration of a certain species within this fluid, and $T$ denotes the temperature. The domain $\Omega \subset \mathbb{R}^d \; (d = 2, 3)$ is  bounded with Lipschitz boundary $\Gamma$. The temperature-dependent viscosity function is denoted by $\nu>0$, $\boldsymbol{K}(x) > 0$ is the permeability matrix, $\boldsymbol{F}(\boldsymbol{y})$ is a given function modelling buoyancy, and $\boldsymbol{D}$ is a $2 \times 2$  constant matrix of the thermal conductivity and solutal diffusivity coefficients, possibly with cross-diffusion terms.

	\begin{remark}\label{rem:third-eqn-notation}
		For our convenience, in the third equation of \eqref{P:GE} we use the condensed vector notation
		$ \boldsymbol{y}:=(T,S)^\top \; \mbox{and} \; \boldsymbol{D}\in\mathbb{R}^{2\times 2}$. 
		The differential operators $\nabla$ and $\bdiv$ act component-wise on vector-valued functions and row-wise on matrix-valued fluxes.
		Therefore, the third equation of \eqref{P:GE} is equivalent to the coupled scalar advection--diffusion system
		$$
		\left\{
		\begin{aligned}
			-\bdiv\left(D_{11}\nabla T + D_{12}\nabla S \right) + \boldsymbol{u} \cdot \nabla T &= 0 \quad \mbox{in } \Omega,\\
			-\bdiv\left(D_{21}\nabla T + D_{22}\nabla S \right) + \boldsymbol{u} \cdot \nabla S &= 0 \quad \mbox{in} \Omega,
		\end{aligned}
		\right.
		$$
		with the boundary condition $\boldsymbol{y}=\boldsymbol{y}^D$ on $\Gamma$, i.e., $T=T^D$ and $S=S^D$ on $\Gamma$.
\end{remark}

\subsection{Assumptions}\label{Assum}
Throughout this article, we make the following assumptions on the governing equation \eqref{P:GE}:

	\smallskip
	
	\noindent\emph{Boundary data regularity.} The boundary data satisfies $\boldsymbol{y}^D = (T^D, S^D)^\top \in [H^{1/2}(\Gamma)]^2$.
	
	\smallskip
	
	\noindent\emph{Lipschitz continuity and uniform boundedness of the kinematic viscosity.} The viscosity $\nu(\cdot)$ satisfies
	$$
	|\nu(T_1) - \nu(T_2)| \leq \gamma_{\nu} |T_1 - T_2|
	\quad \mbox{and} \quad
	\nu_1 \leq \nu(T) \leq \nu_2,
	$$
	for all $T_1,T_2,T\in\mathbb{R}$, where $\gamma_{\nu}\geq 0$, $\nu_1,\nu_2$ are positive constants and $|\cdot|$ denotes the Euclidean norm in $\mathbb{R}^d$.
	
	\smallskip
	
	\noindent\emph{Lipschitz continuity of the buoyancy term.} There exist positive constants $\gamma_F, C_F$ such that
	$$
	|\boldsymbol{F}(\boldsymbol{y}_1) - \boldsymbol{F}(\boldsymbol{y}_2)| \leq \gamma_F |\boldsymbol{y}_1 - \boldsymbol{y}_2|
	\quad \mbox{and} \quad
	|\boldsymbol{F}(\boldsymbol{y})| \leq C_F |\boldsymbol{y}|,
	$$
	for all $\boldsymbol{y}_1,\boldsymbol{y}_2,\boldsymbol{y}\in\mathbb{R}^2$.
	
	\smallskip
	
	\noindent\emph{Uniform positive definiteness of permeability matrix.} The permeability matrix $\boldsymbol{K}$ is a $d\times d$ matrix with measurable coefficients which is assumed to be symmetric and uniformly positive definite; hence, its inverse satisfies
	$\boldsymbol{v}^\top \boldsymbol{K}^{-1}(x) \boldsymbol{v} \geq \alpha_1 |\boldsymbol{v}|^2$
	for all $\boldsymbol{v} \in \mathbb{R}^d$ and $x \in \Omega$, for a constant $\alpha_1 > 0$.
	
	\smallskip
	
	\noindent\emph{Positive definiteness of diffusion matrix.} The constant matrix $\boldsymbol{D}$ is assumed to satisfy the positive definiteness condition (though not necessarily symmetric), that is,
	$\boldsymbol{s}^\top \boldsymbol{D} \boldsymbol{s} \geq \alpha_2 |\boldsymbol{s}|^2$
	for all $\boldsymbol{s} \in \mathbb{R}^2$, for a constant $\alpha_2 > 0$.

\subsection{Related works} To analyse the proposed problem, the first step is to study the well-posedness of  \eqref{P:GE} under suitable assumptions.
The solvability of stationary Navier-Stokes equations on bounded domains with smooth boundary has been studied in the classical works of \cite{temam2001navier,Temam_1995}, and the case of bounded polygonal domains with Lipschitz boundary has been addressed in the works of \cite{Nicaise, MD_LectureNotes} and references therein.   Since \eqref{P:GE} is an extension of the classical Bousinessq-type equation, we now discuss the results in the literature regarding them. The solvability of both the stationary and transient Bousinessq PDEs with temperature-dependent viscosity and thermal conductivity is credited to the works of \cite{lorca1996stationary, MR1675260}. More specifically in the context of \eqref{P:GE},  a nonlinear energy stability theory is established  in \cite{guo1995double} and more recently, the existence of a weak solution, and a uniqueness result under strong regularity assumptions on the velocity, temperature and concentration was shown in \cite{burger2019h}. 

\subsection{Main contributions and outline}\label{Main_Contributions}
The well-posedness of the system in domains with smooth boundaries is relatively well-understood. However, challenges arise when addressing minimal regularity assumptions on both data and geometry. In order to prove the uniqueness of weak solutions (a kind of weak-strong uniqueness), \cite{burger2019h} used  a strong assumption that the divergence free fluid velocity $\boldsymbol{u} \in \boldsymbol{W}^{1,\infty}(\Omega)$ and $\boldsymbol{y}\in[L^{\infty}(\Omega)]^2$.  This is valid under the regularity assumptions (see Chapter II, Section 1.3, Proposition 1.1 in \cite{temam2001navier}) on the domain and the data $\boldsymbol{F}(\boldsymbol{y}) \in \boldsymbol{H}^1(\Omega)$, which dictate that the maximum regularity of strong solution $\boldsymbol{u}$ is $\boldsymbol{H}^3(\Omega)$. However, when the domain has a Lipschitz boundary only ( or when the data is in a weaker space (for example, $\boldsymbol{L}^2(\Omega)$), the analysis in \cite{burger2019h} is not applicable. We deal with the existence of solutions  by using a Faedo-Galerkin approximation technique in Section \ref{Sec:StateExistence}, and the uniqueness of weak solution of the governing equation is proved using minimal regularity of $\boldsymbol{u}$ on  certain class of bounded Lipschitz domains under a small data assumption (see Section \ref{Sec:RegularityState}, Theorem \ref{StateUniqueness} and Remark \ref{smalldata}).

\smallskip

We prove regularity results (see Theorem \ref{Regularity}) for the weak solutions of \eqref{P:GE} on a certain class of bounded Lipschitz domains in two and three dimensions (see Section \ref{Sec:RegularityState}). The significant difficulties lie in estimating the nonlinear terms $(\boldsymbol{u} \cdot \nabla) \boldsymbol{u}$, $\boldsymbol{\bdiv}\hspace{-0.04cm}(\nu(T) \nabla \boldsymbol{u}) $ and $(\boldsymbol{u} \cdot \nabla) \boldsymbol{y}$. They are handled by proving the regularity in a less regular space and then using it as a stepping stone to reach the desired regularity. The techniques and results presented are of independent interest to other problems with a similar nature of nonlinearity. For example Bousinessq model problems. 

\smallskip

Owing to their attractive features a popular choice of finite elements for this class of problems in the literature is the H(div)-conforming family \cite{oyarzua2014exactly}. However, proving the uniqueness of the corresponding discrete system  remains open \cite{burger2019h,oyarzua2014exactly}, which is due to the difficulty in controlling an augmented penalty dependent-norm. To overcome this we explore a fully non-conforming finite element discretization of \eqref{P:GE} based on the lowest order Crouzeix-Raviart (CR) finite element \cite{SB_CRreview} and piecewise constant spaces in Section \ref{Sec:nc:S}.  This discretization is ``strucutre preserving" in the sense that it produces locally exactly divergence-free velocity approximations, which are of particular importance in ensuring that solutions remain locally conservative as well as energy stable \cite{DS_LDG}. We note that, under higher regularity assumption on the continuous solution as mentioned above,  the choice of CR-DG finite element pair to discretize the problem is sufficient to prove the uniqueness of the discrete problem. However, the subsequent discrete analysis will inherit this assumption. On the contrary, our minimal-regularity framework allows to establish  discrete results under milder assumptions on data, geometry and continuous solution. The well-posedness of the discrete equation is studied using discrete lifting \cite{m2an_discretelifting} and fixed point  arguments in Theorems \ref{ncExistenceState} and \ref{ncUniqueness}. In Section \ref{NA:apriori:State}, we derive  \textit{a priori} error estimates under minimal regularity (see Theorem \ref{NA_State}) for the proposed equation. Finally in Section \ref{NE}, we perform a numerical experiment to test the convergence of the proposed scheme in the flow, Stokes and Darcy regimes, and conduct a benchmark test comparing the proposed scheme with the existing literature.

\subsection{Notations}
Let $\Omega \subset \mathbb{R}^d \; (d = 2, 3)$ be a bounded domain with Lipschitz boundary $\Gamma$. We use the following notations throughout this article. The usual $r^{\mathrm{th}}$ integrable Lebesgue measurable function spaces are denoted by $L^{r}(\Omega)$ with the norm $\|f\|_{L^r(\Omega)}=\left(\int_{\Omega}|f(x)|^rdx\right)^{1/r},$ for $f\in L^r(\Omega), \; r \in [1,\infty)$. For $r = \infty$, $L^{\infty}(\Omega)$ is the space of all essentially bounded measurable functions on $\Omega$ with the norm $\norm{f}_{\infty,\Omega} = \esssup\limits_{x \in \Omega} (|f(x)|), $ for $f \in L^{\infty}(\Omega)$ and for $r = 2$ we denote the norm by $\norm{\cdot}_{0,\Omega}$. The space of square integrable Lebesgue measurable functions with zero mean is defined as $$L_0^2(\Omega) := \left\{f\in L^2(\Omega): \int_{\Omega} f(x) \; dx = 0\right\},$$
and we define $(f,g):=\int_{\Omega}f(x)g(x)dx,$ for $f,g\in L^2(\Omega)$,  the usual inner product in $L^2(\Omega)$. The space of $C^{\infty}$-functions with compact support contained in $\Omega$ is denoted by $\left[C_0^{\infty}(\Omega)\right]^d$. Sobolev spaces are denoted by the standard notation $W^{k,r}(\Omega)$  with the norm
\begin{align*}
	\|u\|_{W^{k,r}(\Omega)}&=\left(\sum\limits_{|\alpha|\leq k}\int_{\Omega}|D^{\alpha}u(x)|^rdx\right)^{1/r},\;\mbox{if}\;  r \in [1,\infty),\\
	\norm{u}_{W^{k,\infty}(\Omega)} &= \max_{|\alpha| \leq k} \left\{\esssup\limits_{x \in \Omega} |D^{\alpha} u(x)|\right\}, \; \mbox{if} \; r = \infty,
\end{align*}
for $u\in W^{k,r}(\Omega)$, $k\in\mathbb{N}$ and $\alpha$ is the multi-index. Specifically, for $r = 2,$ we use the notation $W^{k,2}(\Omega) = H^{k}(\Omega)$ with the corresponding norm denoted by $\norm{\cdot}_{k,\Omega}.$ The fractional Sobolev spaces will also be denoted in an analogous way (see \cite{HitchikersGuideFractionalSobolev} for more details).  The vector valued functions in dimension $d$ are denoted by bold face and we define 
$$\boldsymbol{H}_0^1(\Omega) := \left\{\boldsymbol{v} \in \boldsymbol{H}^1(\Omega) : \gamma_0(\boldsymbol{v}) = \boldsymbol{0} \; \mbox{ on } \; \Gamma \right\},$$ where $\gamma_0 : \boldsymbol{H}^1(\Omega) \longrightarrow \boldsymbol{H}^{1/2}(\Gamma)$ is the trace operator, and the norm on $\boldsymbol{H}^{1/2}(\Gamma)$ is defined as,
$$\norm{\boldsymbol{g}}_{1/2,\Gamma} = \inf \big\{\norm{\boldsymbol{v}}_{1,\Omega} : \boldsymbol{v} \in \boldsymbol{H}^1(\Omega) \;\; \mbox{and} \;\; \gamma_0(\boldsymbol{v}) = \boldsymbol{g}\big\}.$$
The dual space of $\boldsymbol{H}_0^1(\Omega)$ is denoted by $\boldsymbol{H}^{-1}(\Omega)$ with the following norm:
$$\norm{\boldsymbol{u}}_{-1,\Omega} := \sup_{0 \neq \boldsymbol{v} \in \boldsymbol{H}_0^1(\Omega)} \frac{\langle \boldsymbol{u}, \boldsymbol{v} \rangle}{~~\norm{\boldsymbol{v}}_{1,\Omega}},$$
where $\langle \cdot, \cdot \rangle$  denotes the duality pairing between $\boldsymbol{H}_0^1(\Omega)$ and $\boldsymbol{H}^{-1}(\Omega).$	
We will also need the following vector-valued Hilbert space
\begin{align*}
	\boldsymbol{H}(\bdiv; \Omega) &:= \left\{\boldsymbol{v} \in \boldsymbol{L}^2(\Omega) : \bdiv\hspace{0.04cm}\boldsymbol{v} \in L^2(\Omega)\right\},
\end{align*}
endowed with the norm $\norm{\boldsymbol{w}}^2_{\bdiv, \Omega} := \norm{\boldsymbol{w}}^2_{0,\Omega} + \norm{\bdiv\hspace{0.04cm}\boldsymbol{w}}^2_{0,\Omega};$ the outward normal on $\Gamma$ is denoted by $\boldsymbol{n}_{\Gamma}$.
The duality product between a space $\boldsymbol{V}$ and its dual $\boldsymbol{V}'$ is denoted by $\langle \boldsymbol{f},\boldsymbol{v} \rangle$, where $\boldsymbol{f} \in \boldsymbol{V}'$ and $\boldsymbol{v} \in \boldsymbol{V}$. Throughout the article $C$ will denote a generic positive constant. 
\section{The functional setting}\label{CtsStateEq}
The variational formulation of the governing equation \eqref{P:GE} is obtained by testing against suitable functions and integrating by parts, and can be formulated as follows:\\

Find $(\boldsymbol{u},p,\boldsymbol{y}) \in \boldsymbol{H}_0^1(\Omega) \times L_0^2(\Omega) \times [H^1(\Omega)]^2$ satisfying $\boldsymbol{y} = \boldsymbol{y}^D$ on $\Gamma$ and 
\begin{align}\label{P:S}
	\left\{
	\begin{aligned}
		a(\boldsymbol{y}; \boldsymbol{u}, \boldsymbol{v}) + c(\boldsymbol{u}, \boldsymbol{u}, \boldsymbol{v}) + b(\boldsymbol{v},p) &= d(\boldsymbol{y},\boldsymbol{v}) \;\; \forall \;\; \boldsymbol{v} \in \boldsymbol{H}_0^1(\Omega), \\
		b(\boldsymbol{u},q) &= 0 \;\; \forall \;\; q \in L_0^2(\Omega), \\
		a_{\boldsymbol{y}}(\boldsymbol{y},\boldsymbol{s}) + c_{\boldsymbol{y}}(\boldsymbol{u},\boldsymbol{y},\boldsymbol{s}) &= 0 \;\; \forall \;\; \boldsymbol{s} \in [H_0^1(\Omega)]^2. 
	\end{aligned}
	\right.
\end{align}

The linear and nonlinear forms in \eqref{P:S} are defined as follows. 
	The bilinear form $a(\cdot;\cdot,\cdot): [H^1(\Omega)]^2 \times \boldsymbol{H}_0^1(\Omega) \times \boldsymbol{H}_0^1(\Omega) \longrightarrow \mathbb{R}$ is defined by
	$$
	a(\boldsymbol{y}; \boldsymbol{u}, \boldsymbol{v}) := (\boldsymbol{K}^{-1} \boldsymbol{u}, \boldsymbol{v}) + (\nu(\boldsymbol{y}) \nabla \boldsymbol{u}, \nabla \boldsymbol{v}),
	$$
	where $\nu(\boldsymbol{y})$ is understood as the kinematic viscosity depending only on the first component of the vector $\boldsymbol{y}$.
	The trilinear form $c(\cdot,\cdot,\cdot): \boldsymbol{H}_0^1(\Omega) \times \boldsymbol{H}_0^1(\Omega) \times \boldsymbol{H}_0^1(\Omega) \longrightarrow \mathbb{R}$ is defined by
	$$
	c(\boldsymbol{w},\boldsymbol{u},\boldsymbol{v}) := ((\boldsymbol{w} \cdot \nabla) \boldsymbol{u}, \boldsymbol{v}).
	$$
	The bilinear form $b(\cdot,\cdot): \boldsymbol{H}_0^1(\Omega) \times L_0^2(\Omega) \longrightarrow \mathbb{R}$ is defined by
	$$
	b(\boldsymbol{v},q) := - (q, \bdiv\hspace{0.04cm} \boldsymbol{v}).
	$$
	The linear form $d(\cdot,\cdot): \boldsymbol{H}^1(\Omega) \times \boldsymbol{H}_0^1(\Omega) \longrightarrow \mathbb{R}$ is defined by
	$$
	d(\boldsymbol{s},\boldsymbol{v}) := (\boldsymbol{F}(\boldsymbol{s}),\boldsymbol{v}).
	$$
	The bilinear form $a_{\boldsymbol{y}}(\cdot,\cdot): [H^1(\Omega)]^2 \times [H^1(\Omega)]^2 \longrightarrow \mathbb{R}$ is defined by
	$$
	a_{\boldsymbol{y}}(\boldsymbol{y},\boldsymbol{s}) := (\boldsymbol{D} \nabla\boldsymbol{y}, \nabla \boldsymbol{s}).
	$$
	The trilinear form $c_{\boldsymbol{y}}(\cdot,\cdot,\cdot) : \boldsymbol{H}_0^1(\Omega) \times [H^1(\Omega)]^2 \times [H^1(\Omega)]^2 \longrightarrow \mathbb{R}$ is defined by
	$$
	c_{\boldsymbol{y}}(\boldsymbol{v},\boldsymbol{y},\boldsymbol{s}) := ((\boldsymbol{v}\cdot\nabla)\boldsymbol{y},\boldsymbol{s}).
	$$

\subsection{Properties of the operators}\label{Sec:SP}
Due to the assumptions on the governing equation, the following boundedness properties hold for all $\boldsymbol{u}, \boldsymbol{v} \in \boldsymbol{H}_0^1(\Omega)$, $q \in L^2(\Omega)$, and $\boldsymbol{y}, \boldsymbol{s} \in [H^1(\Omega)]^2:$
\begin{align}
	|a(\boldsymbol{y};\boldsymbol{u},\boldsymbol{v})| &\leq \norm{\boldsymbol{K}^{-1}}_{\infty,\Omega} \norm{\boldsymbol{u}}_{0,\Omega} \norm{\boldsymbol{v}}_{0,\Omega} + \norm{\nu(\boldsymbol{y})}_{\infty,\Omega} \norm{ \nabla \boldsymbol{u}}_{0,\Omega} \norm{ \nabla \boldsymbol{v}}_{0,\Omega} \nonumber\\
	&\leq \max\left\{\norm{\boldsymbol{K}^{-1}}_{\infty,\Omega}, \nu_2\right\} \left(\norm{\boldsymbol{u}}_{0,\Omega} \norm{\boldsymbol{v}}_{0,\Omega} + \norm{ \nabla \boldsymbol{u}}_{0,\Omega} \norm{ \nabla \boldsymbol{v}}_{0,\Omega}\right) \nonumber\\
	&\leq C_a\|\boldsymbol{u}\|_{1,\Omega}\|\boldsymbol{v}\|_{1,\Omega}, \label{Cty:a}\\
	|a_{\boldsymbol{y}}(\boldsymbol{y},\boldsymbol{s})| &\leq \hat{C}_a \norm{\nabla \boldsymbol{y}}_{0,\Omega} \norm{\nabla \boldsymbol{s}}_{0,\Omega}, \label{Cty:ay} \\
	|b(\boldsymbol{v},q)| &\leq \sqrt{d} \norm{q}_{0,\Omega} \norm{\nabla\boldsymbol{v}}_{0,\Omega},\label{Cty:b} \\
	|d(\boldsymbol{y},\boldsymbol{v})| &\leq C_{F} \norm{\boldsymbol{y}}_{0,\Omega} \norm{\boldsymbol{v}}_{0,\Omega}. \label{Cty:d} 
\end{align}

Standard Sobolev embeddings indicate:
\begin{align}\label{H1embedding}
	\begin{aligned}
			&\mbox{For}\; r' \in [1,\infty) \; \mbox{if} \; d = 2 \;  \mbox{and} \; r' \in [1, 6] \;  \mbox{if} \; d = 3, \; \mbox{there exists} \; C_{r'_d} > 0 \\ &\mbox{such that} \; \norm{\boldsymbol{w}}_{L^{r'}(\Omega)} \leq C_{r'_d} \norm{\boldsymbol{w}}_{1,\Omega}, \; \mbox{for all} \; \boldsymbol{w} \in \boldsymbol{H}^1(\Omega).
		\end{aligned}
\end{align}

Then taking $\boldsymbol{u}, \boldsymbol{v}, \boldsymbol{w} \in \boldsymbol{H}^1(\Omega)$ and $\boldsymbol{y},\boldsymbol{s} \in [H^1(\Omega)]^2$, and applying the Sobolev embedding \eqref{H1embedding} along with H\"older's inequality, provide 
\begin{align}
	|c(\boldsymbol{w},\boldsymbol{u},\boldsymbol{v})| &\leq \norm{\boldsymbol{w}}_{L^6(\Omega)} \norm{\nabla \boldsymbol{u}}_{0,\Omega} \norm{\boldsymbol{v}}_{L^3(\Omega)} \leq C_{6_d}  C_{3_d} \norm{\boldsymbol{w}}_{1,\Omega} \norm{\nabla \boldsymbol{u}}_{0,\Omega} \norm{\boldsymbol{v}}_{1,\Omega}, \label{Cty:c}\\
	|c_{\boldsymbol{y}}(\boldsymbol{w},\boldsymbol{y},\boldsymbol{s})| &\leq \norm{\boldsymbol{w}}_{L^6(\Omega)} \norm{\nabla \boldsymbol{y}}_{0,\Omega} \norm{\boldsymbol{s}}_{L^3(\Omega)}\nonumber \\
	&\leq C_{6_d} \norm{\boldsymbol{w}}_{1,\Omega}  \norm{\nabla \boldsymbol{y}}_{0,\Omega} \norm{\boldsymbol{s}}_{L^3(\Omega)} \leq C_{6_d}C_{3_d} \norm{\boldsymbol{w}}_{1,\Omega}  \norm{\nabla \boldsymbol{y}}_{0,\Omega} \norm{\boldsymbol{s}}_{1, \Omega}.\label{Cty:cy}  
\end{align}

The following  fractional form of the Gagliardo-Nirenberg  inequality (see (2.6) in \cite{kumar2021large}, \cite{FractionalGagliardoNirenberg}) is used in the sequel:
Fix $1 \leq r_2, r_1 \leq \infty $ and $d \in \mathbb{N}. $ Let $\theta \in \mathbb{R} $ and $l \in \mathbb{R}^+ $ such that
\begin{align*}
	\begin{aligned}
		\frac{1}{p_1} = \frac{l}{d} + \theta \left(\frac{1}{r_1} - \frac{k}{d}\right) + \frac{1-\theta}{r_2}, \  \frac{l}{k} \leq \theta \leq 1, 
	\end{aligned}
\end{align*}
then we have
\begin{align}\label{FractionalGagliardoNirenberg}
	\begin{aligned}
		\|D^{l} \boldsymbol{u}\|_{L^{p_1}(\Omega)} \leq C_{gn} \|D^{k} \boldsymbol{u}\|^{\theta}_{L^{r_1}(\Omega)} \norm{\boldsymbol{u}}_{L^{r_2}(\Omega)}^{1-\theta} \; \forall \; \boldsymbol{u} \in \boldsymbol{W}^{k,r_1}(\Omega).
	\end{aligned}
\end{align}
For all $0 < \delta < 1/2$ and $\theta = 1$, the above inequality gives 
\begin{align}
	\begin{aligned}
		\mbox{for} \; d = 2,\;  \norm{\nabla \boldsymbol{u}}_{
			L^{4/(1-2\delta)}(\Omega)}  &\leq C_{gn} \norm{\boldsymbol{u}}_{3/2 + \delta,\Omega}, \label{GN}\\
		\mbox{for} \; d = 3,\;  \norm{\nabla \boldsymbol{u}}_{L^{3/(1-\delta)}(\Omega)}  &\leq C_{gn} \norm{\boldsymbol{u}}_{3/2 + \delta,\Omega}. 
	\end{aligned}
\end{align}

Using \eqref{GN}, one can get the following bound for $\frac{1}{p_1} + \frac{1}{p_2} = \frac{1}{2}$ ($p_1 = \frac{4}{1-2\delta}, 2<p_2=\frac{4}{1+2\delta}<4$ for $d=2$ and $p_1 = \frac{3}{1-\delta}, 3<p_2=\frac{6}{1+2\delta}<6$ for $d=3$),
\begin{align}
	|a(\boldsymbol{y}_a;\boldsymbol{u},\boldsymbol{u}) - a(\boldsymbol{y}_b;\boldsymbol{u},\boldsymbol{u})| &= |((\nu(\boldsymbol{y}_a)-\nu(\boldsymbol{y}_b)) \nabla \boldsymbol{u}, \nabla \boldsymbol{u})| \nonumber\\
	&\leq \gamma_{\nu} \norm{\boldsymbol{y}_a-\boldsymbol{y}_b}_{L^{p_2}(\Omega)} \norm{\nabla \boldsymbol{u}}_{L^{p_1}(\Omega)} \norm{\nabla \boldsymbol{u}}_{0,\Omega} \nonumber\\
	&\leq \gamma_{\nu} C_{{p_2}_{d}} \norm{\boldsymbol{y}_a-\boldsymbol{y}_b}_{1,\Omega} \norm{\nabla \boldsymbol{u}}_{L^{p_1}(\Omega)} \norm{\nabla \boldsymbol{u}}_{0,\Omega} \nonumber\\
	&\leq \gamma_{\nu} C_{{p_2}_{d}} C_{gn} \norm{\boldsymbol{y}_a-\boldsymbol{y}_b}_{1,\Omega} \norm{ \boldsymbol{u}}_{3/2 + \delta,\Omega} \norm{\nabla \boldsymbol{u}}_{0,\Omega}. \label{Cty:a1ma2}
\end{align}
Next, the  bilinear forms $a(\cdot;\cdot,\cdot)$ (for a fixed temperature) and $a_{\boldsymbol{y}}(\cdot,\cdot)$ are coercive, that is,
\begin{align}
	a(\cdot;\boldsymbol{v},\boldsymbol{v}) &\geq \min \left\{\alpha_1,\nu_1\right\} \left(\norm{\boldsymbol{v}}^2_{0,\Omega} + \norm{\nabla \boldsymbol{v}}^2_{0,\Omega}\right)
	=\alpha_a \norm{\boldsymbol{v}}^2_{1,\Omega} \;\; \forall \;\; \boldsymbol{v} \in \boldsymbol{H}_0^1(\Omega),\label{Coer:a}\\
	a_{\boldsymbol{y}}(\boldsymbol{y},\boldsymbol{y}) &\geq \alpha_2 \norm{\nabla \boldsymbol{y}}^2_{0,\Omega} \;\; \forall \;\; \boldsymbol{y} \in [H^1(\Omega)]^2. \label{Coer:ay}
\end{align}
Let $\boldsymbol{X}$ denote the kernel of $b(\cdot,\cdot)$, namely,
$$\boldsymbol{X} := \left\{\boldsymbol{v} \in \boldsymbol{H}_0^1(\Omega): b(\boldsymbol{v},q) =  0\; \forall \; q \in L_0^2(\Omega)\right\} = \left\{\boldsymbol{v} \in \boldsymbol{H}_0^1(\Omega) : \bdiv\hspace{0.04cm}\boldsymbol{v} = 0 \;\; \mbox{in} \;\; \Omega\right\}.$$
For all $\boldsymbol{w} \in \boldsymbol{X}$ and $\boldsymbol{v} \in \boldsymbol{H}_0^1(\Omega)$ an application of the integration by parts leads to
\begin{align}
	c(\boldsymbol{w},\boldsymbol{v},\boldsymbol{v}) &:= \sum_{i,j = 1}^{d} \int_{\Omega} w_i \frac{\partial v_j}{\partial x_i} v_j \; dx = \frac{1}{2} \sum_{i,j=1}^{d} \int_{\Omega} w_i \frac{\partial (v_j)^2}{\partial x_i} \; dx \nonumber\\
	&= \frac{1}{2} \sum_{j=1}^{d} \left[ - \int_{\Omega} \bdiv\hspace{0.04cm}\boldsymbol{w} (v_j)^2 \; dx + \int_{\Gamma} \boldsymbol{w}\cdot\boldsymbol{n}_{\Gamma} (v_j)^2 \; ds\right] = 0. \label{Prop:c1}
\end{align}
Replacing $\boldsymbol{v}$ by $\boldsymbol{u} + \boldsymbol{v}$ and using \eqref{Prop:c1} leads to
\begin{eqnarray}
	c(\boldsymbol{w}, \boldsymbol{u}, \boldsymbol{v}) = -c(\boldsymbol{w}, \boldsymbol{v}, \boldsymbol{u}). \label{Prop:c2}
\end{eqnarray}
Moreover, 
\begin{eqnarray}\label{Prop:c3}
	c(\boldsymbol{u}, \boldsymbol{v}, \boldsymbol{w}) = ((\nabla \boldsymbol{v})^\top \boldsymbol{w}, \boldsymbol{u}),
\end{eqnarray}
for all $\boldsymbol{w}\in \boldsymbol{X},\boldsymbol{u}, \boldsymbol{v}\in \boldsymbol{H}_0^1(\Omega)$.
For all $\boldsymbol{s} \in [H_0^1(\Omega)]^2$, results analogous to \eqref{Prop:c1}, \eqref{Prop:c2} and \eqref{Prop:c3} hold for $c_{\boldsymbol{y}}(\boldsymbol{w}, \boldsymbol{s},\boldsymbol{s})$. 
\section{Well-posedness and regularity}\label{Sec:Well-posedness}
\subsection{Existence of solutions}\label{Sec:StateExistence}
We first state the ``de Rham Theorem" (Theorem IV.2.4, \cite{boyer2012mathematical} and \cite{temam2001navier}), which will be used to prove an equivalence result required in the well-posedness analysis of the governing equation.
\begin{theorem}\label{deRham}
	Let $\Omega$ be a connected, bounded, Lipschitz domain in $\mathbb{R}^d$. Let $f$ be an element in $(H^{-1}(\Omega))^d$, such that for any function $\phi \in (C^{\infty}_0(\Omega))^d$ satisfying $\bdiv\hspace{0.04cm}\phi = 0$, we have $\langle f, \phi \rangle$ $= 0$. Then, there exists a unique function $p \in L_0^2(\Omega)$ such that $f = \nabla p$. 
\end{theorem}
\begin{lemma}\label{State:equivalence}
	If $(\boldsymbol{u},p,\boldsymbol{y}) \in \boldsymbol{H}_0^1(\Omega) \times L_0^2(\Omega) \times [H^1(\Omega)]^2$ solves \eqref{P:S}, then $(\boldsymbol{u},\boldsymbol{y}) \in \boldsymbol{X} \times [H^1(\Omega)]^2$ satisfies $\boldsymbol{y}|_{\Gamma} = \boldsymbol{y}^D$ and
	\begin{align}\label{P:Sred}
		\left\{
		\begin{aligned}
			a(\boldsymbol{y};\boldsymbol{u},\boldsymbol{v}) + c(\boldsymbol{u}, \boldsymbol{u}, \boldsymbol{v}) - d(\boldsymbol{y}, \boldsymbol{v}) &= 0 \;\; \forall \;\; \boldsymbol{v} \in \boldsymbol{X},\\
			a_{\boldsymbol{y}}(\boldsymbol{y},\boldsymbol{s}) + c_{\boldsymbol{y}}(\boldsymbol{u}, \boldsymbol{y}, \boldsymbol{s}) &= 0 \;\; \forall \;\; \boldsymbol{s} \in [H_0^1(\Omega)]^2.
		\end{aligned}
		\right.
	\end{align}
	Conversely, if $(\boldsymbol{u},\boldsymbol{y}) \in \boldsymbol{X} \times [H^1(\Omega)]^2$ is a solution of the reduced problem, then there exists a $p \in L_0^2(\Omega)$ such that $(\boldsymbol{u},p,\boldsymbol{y})$ is a solution of \eqref{P:S}.
\end{lemma}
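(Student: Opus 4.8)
The plan is to run the classical De~Rham--type argument that identifies a mixed Stokes-type formulation with its divergence-free reduction; the only genuine analytic input is the inf-sup (LBB) property of the bilinear form $b(\cdot,\cdot)$ on a bounded Lipschitz domain.

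For the forward direction, assume $(\boldsymbol{u},p,\boldsymbol{y})$ solves \eqref{P:S}. The second equation of \eqref{P:S} together with the characterization $\boldsymbol{X}=\{\boldsymbol{v}\in\boldsymbol{H}_0^1(\Omega):\bdiv\boldsymbol{v}=0\}$ recalled above gives $\boldsymbol{u}\in\boldsymbol{X}$. Restricting the first equation of \eqref{P:S} to test functions $\boldsymbol{v}\in\boldsymbol{X}$ annihilates the pressure term, since $b(\boldsymbol{v},p)=-(p,\bdiv\boldsymbol{v})=0$, and one recovers the first line of \eqref{P:Sred}; the third line of \eqref{P:S} is already the second line of \eqref{P:Sred}, and the trace condition $\boldsymbol{y}|_{\Gamma}=\boldsymbol{y}^D$ is untouched. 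Hence $(\boldsymbol{u},\boldsymbol{y})$ solves \eqref{P:Sred}.

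For the converse, let $(\boldsymbol{u},\boldsymbol{y})\in\boldsymbol{X}\times[H^1(\Omega)]^2$ solve \eqref{P:Sred} for some $\boldsymbol{U}\in\boldsymbol{L}^r(\Omega)$. Define $\mathcal{F}(\boldsymbol{v}):=d(\boldsymbol{y},\boldsymbol{v})+(\boldsymbol{U},\boldsymbol{v})_{r,r'}-a(\boldsymbol{y};\boldsymbol{u},\boldsymbol{v})-c(\boldsymbol{u},\boldsymbol{u},\boldsymbol{v})$ for $\boldsymbol{v}\in\boldsymbol{H}_0^1(\Omega)$. Using the continuity bounds of Lemma~\ref{ContinuityProp} for $a$, $c$, $d$, and the Sobolev embedding $\boldsymbol{H}_0^1(\Omega)\hookrightarrow\boldsymbol{L}^{r'}(\Omega)$ (valid under \eqref{r_range}) to control the pairing $(\boldsymbol{U},\boldsymbol{v})_{r,r'}$, one checks $\mathcal{F}\in\boldsymbol{H}^{-1}(\Omega)$, and by the first equation of \eqref{P:Sred} it vanishes on $\boldsymbol{X}=\ker b$. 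Since $b$ satisfies the inf-sup condition on $\boldsymbol{H}_0^1(\Omega)\times L_0^2(\Omega)$ --- equivalently, $\bdiv:\boldsymbol{H}_0^1(\Omega)\to L_0^2(\Omega)$ is surjective with a bounded right inverse --- the map $q\mapsto b(\cdot,q)$ is an isomorphism of $L_0^2(\Omega)$ onto the polar set $\boldsymbol{X}^{\circ}\subset\boldsymbol{H}^{-1}(\Omega)$; as $\mathcal{F}\in\boldsymbol{X}^{\circ}$, there is a unique $p\in L_0^2(\Omega)$ with $b(\boldsymbol{v},p)=\mathcal{F}(\boldsymbol{v})$ for all $\boldsymbol{v}\in\boldsymbol{H}_0^1(\Omega)$. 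This is precisely the first line of \eqref{P:S}; the second line holds because $\boldsymbol{u}\in\boldsymbol{X}$, and the third is the second line of \eqref{P:Sred}, so $(\boldsymbol{u},p,\boldsymbol{y})$ solves \eqref{P:S}.

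The proof has no real obstacle beyond invoking the inf-sup property of $b$ on a general bounded Lipschitz domain (Bogovskii operator / Ne\v{c}as inequality) and verifying that $\mathcal{F}$ is bounded on $\boldsymbol{H}_0^1(\Omega)$; the latter is where the admissible range \eqref{r_range} enters, being exactly the condition making the control pairing continuous. Both implications also use, as already noted before the statement, that $\boldsymbol{X}$ coincides with the solenoidal subspace of $\boldsymbol{H}_0^1(\Omega)$, so that ``$b(\boldsymbol{u},q)=0$ for all $q\in L_0^2(\Omega)$'' and ``$\boldsymbol{u}\in\boldsymbol{X}$'' are interchangeable.
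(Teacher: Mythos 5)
Your proof is correct and is the standard argument: restriction to the divergence-free kernel for the forward direction, and recovery of the pressure via the inf-sup property of $b$ (De Rham's theorem / surjectivity of $\bdiv:\boldsymbol{H}_0^1(\Omega)\to L_0^2(\Omega)$) applied to the residual functional, whose boundedness is exactly where \eqref{r_range} enters. The paper itself gives no proof here but defers to the cited Lemma 3.2 of \cite{oc_ddf_I}, which proceeds along the same lines, so there is no substantive difference in approach.
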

\begin{proof}
	One way is trivial using the definition of $\boldsymbol{X}$. Conversely, using Theorem \ref{deRham}, there exists  a unique function $p$ such that $\boldsymbol{f} = \nabla p$. In other words, there exists a unique $p$ such that $\langle \boldsymbol{f},\boldsymbol{v} \rangle =  \langle \nabla p, \boldsymbol{v} \rangle = - (p, \bdiv\hspace{0.04cm}\boldsymbol{v})$ for all $\boldsymbol{v} \in \boldsymbol{H}_0^1(\Omega)$. Now, choosing $\langle \boldsymbol{f},\boldsymbol{v} \rangle = -a(\boldsymbol{y};\boldsymbol{u},\boldsymbol{v}) - c(\boldsymbol{u}, \boldsymbol{u}, \boldsymbol{v}) + d(\boldsymbol{y}, \boldsymbol{v})$ and applying the above consequence leads to \eqref{P:S}.    
\end{proof}

\vspace{0.25cm}
A lifting argument is used to deal with the non-homogeneous Dirichlet data appearing in the advection-diffusion equations. We write $\boldsymbol{y}$ as $\boldsymbol{y} = \boldsymbol{y}_0 + \boldsymbol{y}_1$, where $\boldsymbol{y}_0 \in [H^1_0(\Omega)]^2$ and $\boldsymbol{y}_1$ is such that
\begin{eqnarray}\label{lifting}
	\boldsymbol{y}_1 \in [H^1(\Omega)]^2 \;\; \mbox{with} \;\; \boldsymbol{y}_1|_{\Gamma} = \boldsymbol{y}^D.
\end{eqnarray}
We need the following intermediate result to prove the energy estimates (see \cite{lorca1996stationary, burger2019h}).
\begin{lemma}\label{IntermediateLemma}
	If $\boldsymbol{y}^D \in [H^{1/2}(\Gamma)]^2$, then for any $\epsilon > 0$ 	and $1 \leq r \leq 6$ if $d = 3$ and $1 \leq r < \infty$ if $d = 2$, there exists an extension $\boldsymbol{y}_1 \in [H^1(\Omega)]^2$ of $\boldsymbol{y}^D$ with $\norm{\boldsymbol{y}_1}_{L^r(\Omega)} < \epsilon$.
\end{lemma}

\begin{lemma}\label{EnergyEst:State}
	Let $(\boldsymbol{u},\boldsymbol{y})$ be a solution to the reduced problem \eqref{P:Sred}. Then, there exists positive constants $C_{\boldsymbol{u}}$ and $C_{\boldsymbol{y}}$ such that 
	\begin{align}
		\norm{\boldsymbol{u}}_{1,\Omega} &\leq C_{\boldsymbol{u}} \norm{\boldsymbol{y}^D}_{1/2,\Gamma} := M_{\boldsymbol{u}}, \label{M_u}\\
		\norm{\nabla \boldsymbol{y}_0}_{0,\Omega} &\leq C_{\boldsymbol{y}} \norm{\boldsymbol{y}^D}_{1/2,\Gamma}, \nonumber\\
		\norm{\boldsymbol{y}}_{1,\Omega} &\leq C_{\boldsymbol{y}} \norm{\boldsymbol{y}^D}_{1/2,\Gamma} := M_{\boldsymbol{y}}. \label{M_y}
	\end{align}
\end{lemma}
\begin{proof}
	In \eqref{P:Sred}, take $\boldsymbol{v} = \boldsymbol{u}$ and $\boldsymbol{s} = \boldsymbol{y}_0$,
	\begin{align}
		a(\boldsymbol{y}, \boldsymbol{u}, \boldsymbol{u}) + c(\boldsymbol{u}, \boldsymbol{u}, \boldsymbol{u}) - d(\boldsymbol{y}, \boldsymbol{u}) &= 0, \label{EnergyEstState:eq1}\\
		a_{\boldsymbol{y}}(\boldsymbol{y}_0+\boldsymbol{y}_1, \boldsymbol{y}_0) + c_{\boldsymbol{y}}(\boldsymbol{u}, \boldsymbol{y}_0+\boldsymbol{y}_1, \boldsymbol{y}_0) &= 0. \label{EnergyEstState:eq2}
	\end{align} 
	Using the coercivity properties \eqref{Coer:a}, \eqref{Coer:ay},  properties of the trilinear form and the boundedness  estimates \eqref{Cty:a}-\eqref{Cty:d} in \eqref{EnergyEstState:eq1} and \eqref{EnergyEstState:eq2}, we get
	\begin{align}
		\alpha_a \norm{\boldsymbol{u}}_{1,\Omega} &\leq C_F  \left(\norm{\boldsymbol{y}_0}_{0,\Omega} + \norm{\boldsymbol{y}_1}_{0,\Omega}\right) \label{EnergyEstState:eq3}\\
		&\leq  C_F \left( C_P \norm{\nabla \boldsymbol{y}_0}_{0,\Omega} + \norm{\boldsymbol{y}_1}_{0,\Omega}\right), \nonumber	\\
		\alpha_2 \norm{\nabla \boldsymbol{y}_0}_{0,\Omega} &\leq C_{6_d}\norm{\boldsymbol{u}}_{1,\Omega} \norm{\boldsymbol{y}_1}_{L^3(\Omega)} + \hat{C}_a \norm{\nabla\boldsymbol{y}_1}_{0,\Omega}. \label{EnergyEstState:eq4}
	\end{align}
	Putting \eqref{EnergyEstState:eq4} in \eqref{EnergyEstState:eq3}, we  derive 
	\begin{eqnarray*}
		\norm{\boldsymbol{u}}_{1,\Omega} \left(1 - \frac{C_F C_P C_{6_d}}{\alpha_a \alpha_2} \norm{\boldsymbol{y}_1}_{L^3(\Omega)}\right) \leq \frac{C_F }{\alpha_a}\max\left\{\frac{ \hat{C}_a C_P}{ \alpha_2},1 \right\}\norm{\boldsymbol{y}_1}_{1,\Omega}.
	\end{eqnarray*}
	Now choosing $\frac{C_F C_P C_{6_d}}{\alpha_a \alpha_2} \norm{\boldsymbol{y}_1}_{L^3(\Omega)} < C_{\epsilon} < 1$ and applying Lemma \ref{IntermediateLemma}, we obtain 
	\begin{align}
		\norm{\boldsymbol{u}}_{1,\Omega} &\leq \frac{C_F}{\alpha_a (1-C_{\epsilon})}\max\left\{\frac{ \hat{C}_a C_P}{ \alpha_2},1 \right\} \norm{\boldsymbol{y}_1}_{1,\Omega}  \nonumber\\
		&\leq \max\left\{\frac{C_F}{\alpha_a (1-C_{\epsilon})}\max\left\{\frac{ \hat{C}_a C_P}{ \alpha_2},1 \right\} \right\} \norm{\boldsymbol{y}_1}_{1,\Omega} \nonumber\\
		&\leq C_{\boldsymbol{u}} \norm{\boldsymbol{y}^D}_{1/2, \Gamma}. \nonumber
	\end{align}\
	Putting the bounds of $\boldsymbol{u}$ in \eqref{EnergyEstState:eq4} and simplifying, we deduce the following:
	\begin{align*}
		\norm{\nabla \boldsymbol{y}_0}_{0,\Omega} &\leq \max \left\{\frac{1}{\alpha_2}\left(\frac{1}{1-C_{\epsilon}}\right), \frac{C_{\epsilon}}{C_P \alpha_2}\right\} \norm{\boldsymbol{y}_1}_{1,\Omega} \leq C_{\boldsymbol{y}}\norm{\boldsymbol{y}^D}_{1/2,\Gamma},
	\end{align*}
	which completes the proof. 
\end{proof}

\begin{definition}[Weak solution]
	The pair  of functions $(\boldsymbol{u}, \boldsymbol{y}) \in \boldsymbol{X} \times \left[H^1(\Omega)\right]^2$ is called \emph{weak solution} to the system \eqref{P:Sred}, if for $\boldsymbol{y}^D \in \left[H^{1/2}(\Gamma)\right]^2$ and $(\boldsymbol{v}, \boldsymbol{s}) \in \boldsymbol{X} \times \left[H_0^1(\Omega)\right]^2$, $(\boldsymbol{u}, \boldsymbol{y})$ satisfies \eqref{P:Sred} with $\boldsymbol{y}|_{\Gamma} = \boldsymbol{y}^D.$
\end{definition}

Now we prove the existence of a weak solution. 
\begin{theorem}\label{StateExistence}
	Let the assumptions in Section \ref{Assum} on the governing equation hold. For every $\boldsymbol{y}^D \in [H^{1/2}(\Gamma)]^2,$ there exists a lifting $\boldsymbol{y}_1 \in [H^1(\Omega)]^2$ of $\boldsymbol{y}^D$ satisfying \eqref{lifting} such that the problem \eqref{P:Sred} has a \emph{weak solution} $(\boldsymbol{u},\boldsymbol{y}=\boldsymbol{y}_0 + \boldsymbol{y}_1) \in \boldsymbol{X} \times [H^1(\Omega)]^2$.
\end{theorem}
\begin{proof}
	Let $\boldsymbol{v}^k = \boldsymbol{v}^k(\cdot),$ $k\in\mathbb{Z}^+$ (resp. $\boldsymbol{s}^k = \boldsymbol{s}^k(\cdot)$) be smooth functions and  the set $\left\{\boldsymbol{v}^k\right\}_{k=1}^{\infty}$ (resp.  $\left\{\boldsymbol{s}^k\right\}_{k=1}^{\infty}$) be an orthogonal basis of $\boldsymbol{H}_0^1(\Omega)$ (resp. $[H_0^1(\Omega)]^2$) and orthonormal basis of $\boldsymbol{L}^2(\Omega)$ (resp. $[L^2(\Omega)]^2$). Let us also consider the finite-dimensional spaces $\boldsymbol{X}_n = \mbox{span}\left\{\boldsymbol{v}^1, \ldots , \boldsymbol{v}^n \right\}$ and $\boldsymbol{Y}_n = \mbox{span}\left\{\boldsymbol{s}^1, \ldots , \boldsymbol{s}^n\right\},$ for a fixed $n \in \mathbb{Z}^+$ with the following $n^{\mathrm{th}}$ approximate solution of \eqref{P:Sred},
	$$ \boldsymbol{u}^n(x) = \sum_{k=1}^{n} c_{n,k} \; \boldsymbol{v}^k(x) \;\;\; \mbox{and} \;\;\; \boldsymbol{y}_0^n(x) = \sum_{k=1}^{n} d_{n,k} \; \boldsymbol{s}^k(x),$$ 
	such that
	\begin{eqnarray}
		a(\boldsymbol{y}_0^n+\boldsymbol{y}_{1_n}; \boldsymbol{u}^n, \boldsymbol{v}^j) + c(\boldsymbol{u}^n,\boldsymbol{u}^n,\boldsymbol{v}^j) - d(\boldsymbol{y}_0^n+\boldsymbol{y}_{1_n}, \boldsymbol{v}^j) = 0, \label{FGStateEq1} \\
		a_{\boldsymbol{y}}(\boldsymbol{y}_0^n+\boldsymbol{y}_{1_n},\boldsymbol{s}^j) + c_{\boldsymbol{y}}(\boldsymbol{u}^n,\boldsymbol{y}_0^n+\boldsymbol{y}_{1_n},\boldsymbol{s}^j) = 0, \label{FGStateEq2}
	\end{eqnarray}	
	for $1 \leq j \leq n$, where ${\boldsymbol{y}_1}_n=\boldsymbol{\pi}_n\boldsymbol{y}_1:=\sum\limits_{k=1}^n(\boldsymbol{y}_1,\boldsymbol{s}^k)\boldsymbol{s}^k$. Moreover, 
	\begin{align}\label{y1_proj}
		\|{\boldsymbol{y}_1}_n\|_{1,\Omega}\leq \|{\boldsymbol{y}_1}\|_{1,\Omega} \;\; \mbox{and} \;\; \|{\boldsymbol{y}_1}_n-{\boldsymbol{y}_1}\|_{1,\Omega}\to 0 \; \mbox{as} \; n \to\infty.
	\end{align}
	The above equations denote a system of $2n$ nonlinear equations with unknowns $c_{n,k}$ and $d_{n,k}$. We denote the scalar product associated with $\boldsymbol{X}_n$ and $\boldsymbol{Y}_n$ by $\left[\cdot, \cdot\right] := \left(\nabla \cdot, \nabla \cdot\right)$. Moreover, the notation
	$\left[\cdot\right]$ denotes the norm on $\boldsymbol{X}_n$ and $\boldsymbol{Y}_n$, which is induced by $\boldsymbol{H}_0^1(\Omega)$ and $[H_0^1(\Omega)]^2$, respectively. Now we define the map $\boldsymbol{P}_n := \boldsymbol{P}_n^X \times \boldsymbol{P}_n^Y$ such that
	\begin{align}
		\left[\boldsymbol{P}_n^X(\boldsymbol{u}), \boldsymbol{v}\right] &= \left(\nabla \boldsymbol{P}_n^X(\boldsymbol{u}), \nabla \boldsymbol{v}\right)
		= a(\boldsymbol{y}; \boldsymbol{u},\boldsymbol{v}) + c(\boldsymbol{u}, \boldsymbol{u}, \boldsymbol{v}) - d(\boldsymbol{y}, \boldsymbol{v}), \nonumber\\
		\left[\boldsymbol{P}_n^Y(\boldsymbol{y}_0), \boldsymbol{s}\right] &= \left(\nabla \boldsymbol{P}_n^Y(\boldsymbol{y}_0), \nabla \boldsymbol{s}\right) 
		= a_{\boldsymbol{y}}(\boldsymbol{y}_0, \boldsymbol{s}) + c_{\boldsymbol{y}}(\boldsymbol{u}, \boldsymbol{y}_0, \boldsymbol{s}) - a_{\boldsymbol{y}}(\boldsymbol{y}_1, \boldsymbol{s}) - c_{\boldsymbol{y}}(\boldsymbol{u}, \boldsymbol{y}_1, \boldsymbol{s}). \nonumber
	\end{align}
	The norm on $\boldsymbol{X}_n \times \boldsymbol{Y}_n$ is defined by $\norm{(\boldsymbol{u}, \boldsymbol{y}_0)}_{1,\Omega} = \norm{\nabla\boldsymbol{u}}_{0,\Omega} + \norm{\nabla\boldsymbol{y}_0}_{0,\Omega}$. Now we show the boundedness  of $\boldsymbol{P}_n$ using the boundedness estimates \eqref{Cty:a}-\eqref{Cty:d}, Sobolev's embedding \eqref{H1embedding} and the consequence of Poincar\'e's inequality as follows:
	\begin{align*}
		\left[\boldsymbol{P}_n^X(\boldsymbol{u}), \boldsymbol{v}\right] &\leq  \left(C_a \norm{\boldsymbol{u}}_{1,\Omega} + C_{6_d} C_{3_d} \norm{\boldsymbol{u}}_{1,\Omega} \norm{\nabla \boldsymbol{u}}_{0,\Omega} \right.\nonumber + C_F \left( \norm{\boldsymbol{y}_0}_{0,\Omega}+\norm{\boldsymbol{y}_1}_{0,\Omega} \right) \left. \right) \norm{\nabla \boldsymbol{v}}_{0,\Omega}, \\	\left[\boldsymbol{P}_n^Y(\boldsymbol{y}_0), \boldsymbol{s}\right] &\leq \left(\hat{C}_a \norm{\nabla \boldsymbol{y}_0}_{0,\Omega} + C_{6_d} C_{3_d} \norm{\boldsymbol{u}}_{1,\Omega} \norm{\nabla \boldsymbol{y}_0}_{0,\Omega} 
		+ \hat{C}_a \norm{\nabla \boldsymbol{y}_1}_{0,\Omega} \right. \\& \left.\quad + C_{6_d} C_{3_d} \norm{\boldsymbol{u}}_{1,\Omega} \norm{\nabla \boldsymbol{y}_1}_{0,\Omega}\right) \norm{\nabla \boldsymbol{s}}_{0,\Omega} . \nonumber
	\end{align*}
	Thus, for $\boldsymbol{\tilde{z}}=(\boldsymbol{u},\boldsymbol{y}_0)$ and $\boldsymbol{\tilde{w}}=(\boldsymbol{v},\boldsymbol{s}),$ the uniform boundedness of $\boldsymbol{P}_n$ follows, since 
	\begin{align*}
		|\left(\boldsymbol{P}_n(\boldsymbol{\tilde{z}}), \boldsymbol{\tilde{w}}\right)| &\leq \left(C_a \norm{\boldsymbol{u}}_{1,\Omega} + C_{6_d} C_{3_d} \norm{\boldsymbol{u}}_{1,\Omega} \norm{\nabla \boldsymbol{u}}_{0,\Omega} \right.\nonumber + C_F \left( \norm{\boldsymbol{y}_0}_{0,\Omega} +\norm{\boldsymbol{y}_1}_{0,\Omega}\right) \nonumber\\&\quad\left. + \hat{C}_a \norm{\nabla \boldsymbol{y}_0}_{0,\Omega} + C_{6_d} C_{3_d} \norm{\boldsymbol{u}}_{1,\Omega} \norm{\nabla \boldsymbol{y}_0}_{0,\Omega}  \right. \nonumber\\&\quad\left.+ \hat{C}_a \norm{\nabla \boldsymbol{y}_1}_{0,\Omega} + C_{6_d} C_{3_d} \norm{\boldsymbol{u}}_{1,\Omega} \norm{\nabla \boldsymbol{y}_1}_{0,\Omega}\right) (\norm{\nabla \boldsymbol{s}}_{0,\Omega} + \norm{\nabla \boldsymbol{v}}_{0,\Omega}).
	\end{align*}
	We now prove the coercivity of the map by using Lemma \ref{EnergyEst:State} as follows:
	\begin{align*}
		\left[\boldsymbol{P}_n^X(\boldsymbol{u}), \boldsymbol{u}\right] &\geq \alpha_a \norm{\boldsymbol{u}}^2_{1,\Omega} - C_F \norm{\boldsymbol{y}}_{1,\Omega} \norm{\boldsymbol{u}}_{1,\Omega} \nonumber\\
		&\geq \alpha_a \norm{\boldsymbol{u}}^2_{1,\Omega} - C_F M_{\boldsymbol{y}} M_{\boldsymbol{u}}.
	\end{align*}
	It follows that $\left[\boldsymbol{P}_n^X(\boldsymbol{u}), \boldsymbol{u}\right] > 0$ for  $\left[\boldsymbol{u}\right] = \norm{\boldsymbol{u}}^2_{1,\Omega} = \kappa_1$ , and $\kappa_1$ sufficiently large: more precisely,
	$$\kappa_1 > \left\{\frac{1}{\alpha_a} C_F M_{\boldsymbol{y}} M_{\boldsymbol{u}} \right\}.$$ Similarly, we have
	\begin{align*}
		\left[\boldsymbol{P}_n^Y(\boldsymbol{y}_0), \boldsymbol{y}_0\right] &\geq \alpha_2 \norm{\nabla \boldsymbol{y}_0}^2_{0,\Omega} - \hat{C}_a \norm{\boldsymbol{y}_1}_{1,\Omega} \norm{\nabla \boldsymbol{y}_0}_{0,\Omega} - \hat{C}_v \norm{\boldsymbol{u}}_{1,\Omega} \norm{\nabla \boldsymbol{y}_1}_{1,\Omega} \norm{\boldsymbol{y}_0}_{1,\Omega} \\
		&\geq \alpha_2 \norm{\nabla \boldsymbol{y}_0}^2_{0,\Omega} - \hat{C}_a \norm{\boldsymbol{y}^D}_{1/2,\Gamma} C_{\boldsymbol{y}} \norm{\boldsymbol{y}^D}_{1/2,\Gamma} - C_{6_d} C_{3_d} M_{\boldsymbol{u}} \norm{\boldsymbol{y}^D}_{1/2,\Gamma} \norm{\nabla \boldsymbol{y}_0}_{0,\Omega},
	\end{align*}
	which implies that $[\boldsymbol{P}_n^Y(\boldsymbol{y}_0), \boldsymbol{y}_0] > 0$ for $\left[\boldsymbol{y}_0\right] = \norm{\nabla \boldsymbol{y}_0}^2_{0,\Omega} = \kappa_2$, and $\kappa_2$ sufficiently large: more precisely
	$$\kappa_2 > \left\{\frac{1}{\alpha_2} \left(\norm{\boldsymbol{y}^D}_{1/2,\Gamma} C_{\boldsymbol{y}} \norm{\boldsymbol{y}^D}_{1/2,\Gamma}  \left(C_{6_d} C_{3_d} M_{\boldsymbol{u}} + \hat{C}_a\right)\right)\right\}.$$ 
	
	From the coercivity of $\boldsymbol{P}_n^X$ and $\boldsymbol{P}_n^Y$ and the definition of $\boldsymbol{P}_n$, we get that $\left[\boldsymbol{P}_n(\tilde{\boldsymbol{w}}), \tilde{\boldsymbol{w}}\right] > 0$ for sufficiently large  $\left[\tilde{\boldsymbol{w}}\right] = \kappa_1 + \kappa_2 > 0$. Then there exists a solution $(\boldsymbol{u}^n, \boldsymbol{y}_0^n + {\boldsymbol{y}_1}_n)$ of \eqref{FGStateEq1}-\eqref{FGStateEq2} such that $\boldsymbol{P}_n(\boldsymbol{u}^n, \boldsymbol{y}_0^n + {\boldsymbol{y}_1}_n) = 0$ (see Lemma 1.4 of Chapter II in \cite{temam2001navier}). 
	
	Now in what follows we show the passage to the limit. Multiply \eqref{FGStateEq1} and \eqref{FGStateEq2} by $c_{n,k}$ and $d_{n,k}$, respectively and sum over $1 \leq k \leq n$ to obtain 
	\begin{eqnarray*}
		a(\boldsymbol{y}_0^n+\boldsymbol{y}_{1_n}; \boldsymbol{u}^n, \boldsymbol{u}^n) - d(\boldsymbol{y}_0^n+\boldsymbol{y}_{1_n}, \boldsymbol{u}^n) = 0, \\
		a_{\boldsymbol{y}}(\boldsymbol{y}_0^n+\boldsymbol{y}_{1_n},\boldsymbol{y}_0^n) + c_{\boldsymbol{y}}(\boldsymbol{u}^n,\boldsymbol{y}_0^n+\boldsymbol{y}_{1_n},\boldsymbol{y}_0^n) = 0.
	\end{eqnarray*}		
	Following the same steps as in the proof of Lemma \ref{EnergyEst:State}, we can get analogous uniform bounds on $\norm{\boldsymbol{u}^n}_{1,\Omega}$ and $\norm{\nabla \boldsymbol{y}_0^n}_{0,\Omega}$. Thus, using the Banach-Alaoglu Theorem (see for reference \cite[Theorem A.51]{leoni2017first}), we can extract subsequences $(\boldsymbol{u}^{n_k})_{k\in\mathbb{Z}^+}$ and $(\boldsymbol{y}_0^{n_k})_{k\in\mathbb{Z}^+}$ such that
	\begin{align}\label{weak convergence}
		\boldsymbol{u}^{n_k} \rightharpoonup \boldsymbol{u} \;\; \mbox{ in } \;\; \boldsymbol{H}_0^1(\Omega)   \;\; \mbox{ and } \;\;  \boldsymbol{y}_0^{n_k} \rightharpoonup \boldsymbol{y} \;\; \mbox{ in } \;\; [{H}_0^1(\Omega)]^2 \;\; \mbox{ as } \;\; k \rightarrow \infty.
	\end{align}
	Moreover, since $\boldsymbol{H}_0^1(\Omega)$ is compactly embedded in $\boldsymbol{L}^2(\Omega)$, we can extract subsequences $(\boldsymbol{u}^{n_{k_j}})_{j\in\mathbb{Z}^+}$ and $(\boldsymbol{y}_0^{n_{k_j}})_{j\in\mathbb{Z}^+}$ such that
	\begin{align}\label{strong convergence}
		\boldsymbol{u}^{n_{k_j}} \longrightarrow \boldsymbol{u} \;\; \mbox{ in } \;\; \boldsymbol{L}^2(\Omega) \mbox{ and } \;\; \boldsymbol{y}_0^{n_{k_j}} \longrightarrow \boldsymbol{y}_0 \;\; \mbox{ in } \;\; [{L}^2(\Omega)]^2 \;\; \mbox{ as } \;\; j \rightarrow \infty,
	\end{align}
	and a.e. convergence along a further subsequence. 
	Now using the above convergences, we first show that $c(\boldsymbol{u}^{n_{k_j}}, \boldsymbol{u}^{n_{k_j}}, \boldsymbol{v}) \longrightarrow c(\boldsymbol{u}, \boldsymbol{u}, \boldsymbol{v})$ for all $\boldsymbol{v}\in \boldsymbol{H}_0^1(\Omega)$ as follows:
	\begin{align*}
		&	|c(\boldsymbol{u}^{n_{k_j}}, \boldsymbol{u}^{n_{k_j}}, \boldsymbol{v}) - c(\boldsymbol{u}, \boldsymbol{u}, \boldsymbol{v})| \nonumber\\&= |c(\boldsymbol{u}^{n_{k_j}}, \boldsymbol{v}, \boldsymbol{u}^{n_{k_j}} - \boldsymbol{u}) - c(\boldsymbol{u}^{n_{k_j}} - \boldsymbol{u}, \boldsymbol{v},\boldsymbol{u})|  \nonumber\\
		&\leq  \norm{\boldsymbol{u}^{n_{k_j}}}_{L^4(\Omega)} \norm{\nabla \boldsymbol{v}}_{0,\Omega} \norm{\boldsymbol{u}^{n_{k_j}}-\boldsymbol{u}}_{L^4(\Omega)} + \norm{\boldsymbol{u}^{n_{k_j}}-\boldsymbol{u}}_{L^4(\Omega)}  \norm{\nabla \boldsymbol{v}}_{0,\Omega} \norm{\boldsymbol{u}}_{L^4(\Omega)} \nonumber\\
		&\leq \begin{cases}
			2^{1/4} \; C_{4_d} \norm{\boldsymbol{u}^{n_{k_j}}}_{1,\Omega} \norm{\nabla \boldsymbol{v}}_{0,\Omega} \left(\norm{\boldsymbol{u}^{n_{k_j}} - \boldsymbol{u}}^{1/2}_{0,\Omega} \norm{\nabla (\boldsymbol{u}^{n_{k_j}} - \boldsymbol{u})}^{1/2}_{0,\Omega} \right)\\
			~~~~~+ 2^{1/4} \; C_{4_d} \norm{\boldsymbol{u}}_{1,\Omega} \norm{\nabla \boldsymbol{v}}_{0,\Omega} \left(\norm{\boldsymbol{u}^{n_{k_j}} - \boldsymbol{u}}^{1/2}_{0,\Omega} \norm{\nabla (\boldsymbol{u}^{n_{k_j}} - \boldsymbol{u})}^{1/2}_{0,\Omega} \right) \quad \mbox{when}\; d=2, \\
			2^{1/2} \; C_{4_d} \norm{\boldsymbol{u}^{n_{k_j}}}_{1,\Omega} \norm{\nabla \boldsymbol{v}}_{0,\Omega} \left(\norm{\boldsymbol{u}^{n_{k_j}} - \boldsymbol{u}}^{1/4}_{0,\Omega} \norm{\nabla (\boldsymbol{u}^{n_{k_j}} - \boldsymbol{u})}^{3/4}_{0,\Omega} \right)\\
			~~~~~+ 2^{1/2} \;C_{4_d} \norm{\boldsymbol{u}}_{1,\Omega} \norm{\nabla \boldsymbol{v}}_{0,\Omega} \left(\norm{\boldsymbol{u}^{n_{k_j}} - \boldsymbol{u}}^{1/4}_{0,\Omega} \norm{\nabla (\boldsymbol{u}^{n_{k_j}} - \boldsymbol{u})}^{3/4}_{0,\Omega} \right) \quad \mbox{when}\; d=3,
		\end{cases}\\ 		 
		&\longrightarrow 0 \;\; \mbox{ as } \;\; j \rightarrow \infty.
	\end{align*}
	Secondly, the following also holds for all  $\boldsymbol{v} \in \boldsymbol{C}_0^{\infty}(\Omega)$,
	\begin{align*}
		&|(\nu(\boldsymbol{y}_0^{n_{k_j}} + \boldsymbol{y}_{1_n}^{k_j}) \nabla \boldsymbol{u}^{n_{k_j}}, \nabla \boldsymbol{v}) - (\nabla \boldsymbol{u}, \nu(\boldsymbol{y}_0 + \boldsymbol{y}_1), \nabla \boldsymbol{v})| \\
		&\quad\leq  |(\nabla \boldsymbol{u}^{n_{k_j}}, (\nu(\boldsymbol{y}_0^{n_{k_j}} + \boldsymbol{y}_{1_n}^{k_j})- \nu (\boldsymbol{y}_0 + \boldsymbol{y}_1)) \nabla \boldsymbol{v})| + |(\nabla (\boldsymbol{u}^{n_{k_j}} - \boldsymbol{u}), \nu (\boldsymbol{y}_0 + \boldsymbol{y}_1) \nabla \boldsymbol{v})| \longrightarrow 0.
	\end{align*}

	Indeed, the second term goes to zero by the weak convergence given in \eqref{weak convergence} and the first term goes to zero since $\left(\nabla \boldsymbol{u}^{n_{k_j}}\right)_{j\in\mathbb{Z}^+}$ is uniformly bounded,  \eqref{y1_proj} and the following convergence:
	\begin{align*}
		\norm{\left(\nu (\boldsymbol{y}_0^{n_{k_j}} + \boldsymbol{y}_{1_n}^{k_j}) - \nu (\boldsymbol{y}_0 + \boldsymbol{y}_1) \right) \nabla \boldsymbol{v}}_{0,\Omega} &\leq \norm{\left(\nu (\boldsymbol{y}_0^{n_{k_j}} + \boldsymbol{y}_{1_n}^{k_j}) - \nu (\boldsymbol{y}_0 + \boldsymbol{y}_1) \right)}_{0,\Omega} \norm{\nabla \boldsymbol{v}}_{\infty,\Omega} \\
		&\leq \gamma_\nu \big(\|\boldsymbol{y}_0^{n_{k_j}} - \boldsymbol{y}_0\|_{0,\Omega} + \|\boldsymbol{y}_{1_n}^{k_j} - \boldsymbol{y}_1\|_{0,\Omega}\big) \norm{\nabla \boldsymbol{v}}_{\infty,\Omega} \longrightarrow 0.
	\end{align*}
	Similar logic holds for passing the limit to other terms of \eqref{FGStateEq1}-\eqref{FGStateEq2} through the subsequence $(n_{k_j})$ as $j \rightarrow \infty$.
\end{proof}

\subsection{Regularity of solutions}\label{Sec:RegularityState}
In this section, we discuss the regularity of solutions of the governing equation \eqref{P:GE} on a class of Lipschitz domains. 
Following section 2 in \cite{MoniqueDaugeRegularity},  we denote by $\mathcal{O}_2(\mathbb{R}^2)$ and $\mathcal{O}_3(\mathbb{R}^3)$ for some  class of domains in two and three dimensions, respectively. The domain  $\mathcal{O}_2(\mathbb{R}^2)$ consists of all curvilinear polygons, possibly with cracks but without turning points: $\Omega$ belongs to $\mathcal{O}_2(\mathbb{R}^2)$ if and only if it satisfies
\begin{itemize}
	\item[(i)] $\Omega$ is bounded and connected.
	\item[(ii)] The boundary of $\Omega$ consists of finite number of smooth closed arcs $\Gamma_1,\ldots , \Gamma_N,\ \Gamma_{N+1} = \Gamma_1$.
	\item[(iii)] $\Omega$ is locally diffeomorphic to a neighbourhood of zero in a plane sector $\Gamma_{A_j},$ where $A_j$ and $A_{j+1}$ are the ends of $\Gamma_j$. 
\end{itemize}
$\Omega$ belongs to $\mathcal{O}_3(\mathbb{R}^3)$ if and only if 
\begin{itemize}
	\item[(i)] $\Omega$ is bounded and connected.
	\item[(ii)] At each point $x$ of its ``stretched boundary'', $\Omega$ is locally diffeomorphic to a neighbourhood of zero in one of the following three kind of domains:
	\subitem (a) A half-space: then $x$ is a regular point.
	\subitem (b) A dihedron isomorphic to $\mathbb{R} \times \Gamma_x$, with $\Gamma_x$ a plane sector with an opening $\omega_x$ different from $\pi$: then $x$ belongs to an edge.
	\subitem (c) A cone $\Gamma_x$ with vertex zero, such that its intersection $G_x$ with $\mathcal{S}^2$ belongs to $\mathcal{O}_2(\mathcal{S}^2)$, the class of curvilinear polygons on unit sphere $\mathcal{S}^2$ of $\mathbb{R}^3$: then $x$ is a vertex.   
\end{itemize}

Now, invoking Theorem 5.5 and Theorem 9.20 in \cite{MoniqueDaugeRegularity}, we state  in Lemma \ref{RegularityStokes}, the regularity result proved for the following Stokes problem:
\begin{align}\label{StokesProblem}
	\left\{
	\begin{aligned}
		- \Delta \boldsymbol{u} + \nabla p &= \boldsymbol{\mathcal{F}} \;\;  \;\; \mbox{ in } \;\; \Omega,  \\ \bdiv\hspace{0.04cm}\boldsymbol{u} &= 0 \;\; \mbox{ in } \;\; \Omega, \\ \boldsymbol{u} &= \boldsymbol{0} \;\; \mbox{ on } \;\; \Gamma,
	\end{aligned}\right.
\end{align}
where $\boldsymbol{\mathcal{F}}$ is an external forcing. 
\begin{lemma}\label{RegularityStokes}
	Let $\Omega$ be of the class $\mathcal{O}^2(\mathbb{R}^2)$ or $\mathcal{O}^3(\mathbb{R}^3)$, the weak solution to the Stokes problem \eqref{StokesProblem} satisfies $(\boldsymbol{u}, p) \in \left[\boldsymbol{H}_0^1(\Omega) \cap \boldsymbol{H}^{3/2 + \varkappa}(\Omega)\right] \times H^{1/2 + \varkappa}(\Omega)$ for $\varkappa \in (-\frac{1}{2}, 0) \cup (0, \frac{1}{2})$ given the data $\boldsymbol{\mathcal{F}} \in \boldsymbol{H}^{-1/2 + \varkappa}(\Omega)$. 	
\end{lemma}  
The regularity result in Lemma \ref{RegularityStokes} holds for different examples of domains depending on $\varkappa$, which in turn depends on the interior angle of the re-entrant corners (see for reference \cite{MD_LectureNotes}). In three dimensions, in particular, $\varkappa \in \big(-\frac{1}{2},0\big)$ covers (see (1.6) in \cite{MoniqueDaugeRegularity}),
\begin{align}
	&\mbox{any domain in $\mathcal{O}_3(\mathbb{R}^3)$ with corners,} \label{ExDomain1}
\end{align}
and, $\varkappa \in \big(-\frac{1}{2}, 0\big) \cup (0, 0.044]$ holds for
\begin{align}\label{Exdomain2}
	\begin{aligned}
		&\Omega = Q_1 \backslash Q_2 \; \mbox{where} \; Q_1 \; \mbox{and} \; Q_2 \; \mbox{are two rectangular parallelepipeds} \\
		&\mbox{with the same axes (see (1.7), \cite{MoniqueDaugeRegularity})}. 
	\end{aligned}
\end{align}
Furthermore, we also state a regularity result for elliptic equations on bounded Lipschitz domains in $\mathbb{R}^d$ in the following lemma (see for reference  \cite{EllipticEquationsOnPolyhedralDomains}, \cite{Nicaise} and \cite{SavareRegularity}).

\begin{lemma}\label{RegularityElliptic}
	Let $\Omega$ be of  class $\mathcal{O}^2(\mathbb{R}^2)$ or $\mathcal{O}^3(\mathbb{R}^3)$. Then under the assumption of real valued, uniformly positive definite and Lipschitz continuous $A$, for every $\varkappa \in (-\frac{1}{2},0) \cup (0,\frac{1}{2})$, if 
	$$f \in H^{-1/2 + \varkappa} (\Omega),  \ g \in H^{1 + \varkappa}(\Gamma),$$
	the non-homogeneous Dirichlet problem
	$$- \bdiv\hspace{-0.06cm}(A \nabla y )= f \;\; \mbox{in} \;\; \Omega, \;\; y = g \;\; \mbox{on} \;\; \Gamma,$$
	admits a unique solution $y \in H^{3/2 + \varkappa}(\Omega)$ and the following estimate holds:
	\begin{align}\label{EllipticReg_Est}
		\|y\|_{3/2+\varkappa,\Omega} \leq C \left(\|f\|_{-1/2+\varkappa,\Omega} + \|g\|_{1+\varkappa,\Gamma}\right). 
	\end{align}
\end{lemma}

Now we are in a position to prove the main regularity result for the system  \eqref{P:GE}.  As $\nu(\cdot)$ is globally Lipschitz continuous, $\nu(\cdot)$ is differentiable a.e. by Rademacher's theorem (see for reference Chapter 5, Theorem 6, \cite{evans2022partial})and we denote the derivative by $\nu_T(\cdot)$ which is a.e. bounded by $\gamma_{\nu}$.  
\begin{theorem}\label{Regularity}
	Let $\boldsymbol{y}^D \in \left[H^{1 + \delta}(\Gamma)\right]^2, $  for $\delta \in (0,\frac{1}{2})$ be given.  Then for any domain $\Omega$ of the class $\mathcal{O}^2(\mathbb{R}^2)$ or $\mathcal{O}^3(\mathbb{R}^3)$, the weak solution to \eqref{P:GE} satisfies  $$(\boldsymbol{u}, p, \boldsymbol{y}) \in \left[\boldsymbol{H}_0^1(\Omega) \cap \boldsymbol{H}^{3/2+\delta}(\Omega)\right] \times \left[L^2_0(\Omega) \cap H^{1/2+\delta}(\Omega)\right] \times \left[H^{3/2+ \delta}(\Omega)\right]^2$$ and 
	\begin{align}\label{M}
		\norm{\boldsymbol{u}}_{3/2+\delta,\Omega} + \norm{\boldsymbol{y}}_{3/2+\delta,\Omega} \leq M,
	\end{align}
	where, $M$ depends on the data $\|\boldsymbol{y}^D\|_{1+\delta,\Gamma}$ and is defined in \eqref{explicitbound_u_2D} and \eqref{exlplicitbound_u_3D_B} for two and three dimensions, respectively.
\end{theorem}
\begin{proof}
	\underline{Idea of the proof:} One can rewrite the system \eqref{P:GE} in $\Omega$ as 
	\begin{align}\label{Reg_1}
		\left\{
		\begin{aligned}
			- \nu(T) \Delta\boldsymbol{u}+ \nabla p &= \boldsymbol{F}(\boldsymbol{y}) - \boldsymbol{K}^{-1} \boldsymbol{u} - (\boldsymbol{u} \cdot \nabla) \boldsymbol{u} + (\nabla \nu(T) \cdot \nabla) \boldsymbol{u}=: \tilde{\boldsymbol{\mathcal{F}}} ,\\
			\bdiv\hspace{0.04cm}\boldsymbol{u} &= 0 , \\
			-\boldsymbol{\bdiv}\hspace{-0.1cm}(\boldsymbol{D} \nabla \boldsymbol{y}) &= -(\boldsymbol{u} \cdot \nabla) \boldsymbol{y}=:\tilde{\boldsymbol{f}}, \\
			\boldsymbol{y} = \boldsymbol{y}^D, \;\; \boldsymbol{u} &= \boldsymbol{0} \;\; \mbox{on} \;\; \Gamma. 
		\end{aligned}
		\right.
	\end{align}
	The idea is to show that $\tilde{\boldsymbol{\mathcal{F}}} \in \boldsymbol{H}^{-1/2 + \delta}(\Omega)$, $\tilde{\boldsymbol{f}} \in \left[H^{-1/2 + \delta} (\Omega)\right]^2$, so that we can use the regularity results available in Lemmas \ref{RegularityStokes} and \ref{RegularityElliptic} to reach our desired regularity result. Observe that in Lemma \ref{RegularityStokes}, the authors have developed the regularity of the Stokes problem on Lipschitz domains for $\nu(T) = 1,$ and we were not able to find any relevant reference in the literature which addresses the case of variable coefficients. To address this we use $\nu_1 \leq \nu(T) \leq \nu_2$ and rewrite \eqref{Reg_1} as follows:
	\begin{align}\label{Reg_2}
		\left\{
		\begin{aligned}
			-\Delta\boldsymbol{u}+ \nabla \tilde{p} &= \left(\frac{1}{\nu(T)}\right)\left(\boldsymbol{F}(\boldsymbol{y}) - \boldsymbol{K}^{-1} \boldsymbol{u} - (\boldsymbol{u} \cdot \nabla) \boldsymbol{u} + (\nabla \nu(T) \cdot \nabla) \boldsymbol{u} - p \frac{\nu_T(T) \nabla T}{\nu(T)}\right)=: {\boldsymbol{\mathcal{F}}} ,\\
			\bdiv\hspace{0.04cm}\boldsymbol{u} &= 0 , \\
			-\boldsymbol{\bdiv}\hspace{-0.1cm}(\boldsymbol{D} \nabla \boldsymbol{y}) &= -(\boldsymbol{u} \cdot \nabla) \boldsymbol{y}=:{\boldsymbol{f}}, \\
			\boldsymbol{y} = \boldsymbol{y}^D, \;\; \boldsymbol{u} &= \boldsymbol{0} \;\; \mbox{on} \;\; \Gamma,
		\end{aligned}
		\right.
	\end{align}		
	where $\tilde{p} := \frac{p}{\nu(T)} - m,$ with $m$ being the mean value of $\frac{p}{\nu(T)} $  so that $\tilde{p} \in L_0^2(\Omega)$.  
	\smallskip
	In two dimensions, we first prove that $\boldsymbol{f} \in [H^{-1/2 + \delta}(\Omega)]^2$ in \textit{step 1} followed by $\boldsymbol{\mathcal{F}} \in \boldsymbol{H}^{-1/2 + \delta}(\Omega)$ in \textit{step 2} which uses the regularity result derived in the previous step to handle the term $(\nabla \nu(T) \cdot \nabla) \boldsymbol{u}.$ The same approach fails in three dimensions. In this case, the major difficulties lie in estimating the nonlinear terms $(\boldsymbol{u} \cdot \nabla) \boldsymbol{y}, (\boldsymbol{u} \cdot \nabla) \boldsymbol{u}$ and $(\nabla \nu(T) \cdot \nabla) \boldsymbol{u}$ and the main idea to deal with them is to prove the regularity of the solution  in a less regular space $(\boldsymbol{u}, p, \boldsymbol{y}) \in [\boldsymbol{H}_0^1(\Omega) \cap \boldsymbol{H}^{3/2-\delta}(\Omega)] \times L^2_0(\Omega)\cap  H^{1/2-\delta}(\Omega) \times [ H^{3/2-\delta}(\Omega)]^2$  by showing that $\boldsymbol{\mathcal{F}} \in \boldsymbol{H}^{-1/2 - \delta}(\Omega)$, $\boldsymbol{f} \in [H^{-1/2 - \delta} (\Omega)]^2$ and $\boldsymbol{y}^D \in [H^{1 - \delta}(\Gamma)]^2$. Then we use this as a stepping stone to reach our goal. We also observe that in order to prove the regularity: $\boldsymbol{u} \in \boldsymbol{H}^{3/2-\delta}(\Omega)$ in three dimensions, more than $[H^{3/2 - \delta}(\Omega)]^2$ regularity is needed for $\boldsymbol{y}$, this is due to the nonlinearity in the diffusion coefficient.  Keeping these observations in mind, we divide the proof into two parts, providing a  clear exposition in two and three dimensions, separately. In the first step, we prove the regularity of the solution variable $\boldsymbol{y}$  followed by the regularity of $\boldsymbol{u}$.
	
	\smallskip
	
	\underline{Part I:} Two dimensions. 
	
	\smallskip
	
	\underline{Step 1} [Regularity of $\boldsymbol{y}$ ($[ H^{3/2+\delta}(\Omega)]^2$).]  To begin we recall the  fractional Leibniz rule \cite{AnconaLiebnitz}	(see Remark  \ref{rem:Leibniz}) with $\frac{1}{p_1} + \frac{1}{p_2} = \frac{1}{2}$ and $\frac{1}{q_1} + \frac{1}{q_2} = \frac{1}{2}$, we find 
	
	\begin{align}
		\norm{\left(\boldsymbol{u} \cdot \nabla\right) \boldsymbol{y}}_{{-1/2 + \delta},\Omega} &= \norm{\nabla \cdot \left(\boldsymbol{u} \otimes \boldsymbol{y}\right)}_{{-1/2 + \delta},\Omega}\leq  C \norm{\boldsymbol{u} \otimes \boldsymbol{y}}_{{1/2 + \delta},\Omega} \nonumber\\
		&\leq  C \left(\norm{\boldsymbol{u}}_{W^{1/2 + \delta, p_1}(\Omega)} \norm{\boldsymbol{y}}_{L^{p_2}(\Omega)} + \norm{\boldsymbol{u}}_{L^{q_1}(\Omega)}  \norm{\boldsymbol{y}}_{W^{1/2+\delta,q_2}(\Omega)}\right). \label{RegEq1}
	\end{align}		
	
	Now we apply the fractional Gagliardo-Nirenberg inequality \eqref{FractionalGagliardoNirenberg} for $\theta =k= 1, p_1 = 4/(1+2\delta) = q_2$ along with \eqref{H1embedding} for $p_2 = 4/(1-2\delta) = q_1$, to get
	\begin{align}
		\norm{\left(\boldsymbol{u} \cdot \nabla\right) \boldsymbol{y}}_{{-1/2 + \delta},\Omega} \leq C \norm{\boldsymbol{u}}_{1,\Omega} \norm{\boldsymbol{y}}_{1,\Omega}. \label{RegEq2_2d}
	\end{align}
	
	Next invoking Lemma \ref{EnergyEst:State} and Theorem \ref{StateExistence}, we infer that $\norm{\boldsymbol{f}}_{-1/2+\delta,\Omega}$ is finite.  Thus an application of Lemma \ref{RegularityElliptic} yields  $\boldsymbol{y} \in \left[H^{3/2 + \delta}(\Omega)\right]^2$.  Let us denote by $M_{\boldsymbol{y}^D, \boldsymbol{U}} := \norm{\boldsymbol{y}^D}_{1/2,\Gamma} + \norm{\boldsymbol{y}^D}_{1+\delta,\Gamma}$, then using the above bounds, Lemma \ref{EnergyEst:State} and \eqref{EllipticReg_Est}, we obtain
	\begin{align}\label{explicitbound_y_2D}
		\norm{\boldsymbol{y}}_{3/2+\delta,\Omega} \leq \norm{\boldsymbol{f}}_{-1/2+\delta,\Omega} \leq C_{l_1}\; M^2_{\boldsymbol{y}^D,\boldsymbol{U}}, 
	\end{align}
	where the positive constant $C_{l_1}$ depends on $C_{\boldsymbol{u}}, C_{\boldsymbol{y}}$ and the embedding constants used. 
	\smallskip

	\underline{Step 2} [Regularity of $\boldsymbol{u}$ ($\boldsymbol{H}^{3/2+\delta}(\Omega)$).] Navier-Stokes nonlinearity can be dealt with analogous to Step 1 and $\nu_1 \leq \nu(T) \leq \nu_2$ yielding
	$$\norm{(1/\nu(T))(\boldsymbol{u} \cdot \nabla) \boldsymbol{u}}_{-1/2 + \delta, \Omega} \leq C \norm{\boldsymbol{u}}^2_{1,\Omega}.$$
	The embedding $\boldsymbol{L}^2(\Omega) \hookrightarrow {\boldsymbol{H}^{-1/2 + \delta}(\Omega)}$ and  assumption on the buoyancy term gives
	\begin{align*}
		&\norm{(1/\nu(T))\boldsymbol{K}^{-1} \boldsymbol{u}}_{-1/2+\delta,\Omega} \leq \frac{C}{\nu_1} \norm{\boldsymbol{K}^{-1}}_{L^{\infty}(\Omega)} \norm{\boldsymbol{u}}_{0,\Omega} \leq C \norm{\boldsymbol{u}}_{1,\Omega},\\
		&\norm{(1/\nu(T))F(\boldsymbol{y})}_{-1/2+\delta,\Omega} \leq  \frac{C_F}{\nu_1}  \norm{\boldsymbol{y}}_{0,\Omega} \leq C \norm{\boldsymbol{y}}_{1,\Omega}.
	\end{align*}
	Using the Lipschitz continuity of $\nu(T)$,  \eqref{FractionalGagliardoNirenberg} and H\"{o}lder's inequality, one can derive
	\begin{align}
		\norm{(1/\nu(T))(\nabla \nu(T) \cdot \nabla) \boldsymbol{u}}_{-1/2 + \delta, \Omega} &\leq \frac{1}{\nu_1} \norm{\nu_{T}(T) (\nabla T \cdot \nabla) \boldsymbol{u}}_{-1/2+\delta,\Omega} \\
		&= \frac{1}{\nu_1} \norm{D^{-1/2+\delta} (\nu_{T}(T)(\nabla T \cdot \nabla) \boldsymbol{u})}_{0,\Omega}\nonumber\\
		&\leq C \norm{ D^{1/2-\delta} (D^{-1/2+\delta} (\nu_{T}(T)(\nabla T \cdot \nabla) \boldsymbol{u}))}_{L^{4/(3-2\delta)}(\Omega)} \nonumber\\&\leq C \norm{\nu_T(T)}_{L^{\infty}(\Omega)} \norm{(\nabla T \cdot \nabla) \boldsymbol{u}}_{L^{4/(3-2\delta)}(\Omega)}\nonumber\\
		&\leq C \norm{\boldsymbol{u}}_{W^{1,p_1}(\Omega)} \norm{T}_{W^{1,p_2}(\Omega)}, \label{eqReg_4_2d}
	\end{align}
	with $ \frac{1}{p_1} + \frac{1}{p_2} = \frac{3-2\delta}{4}.$ Now choosing $p_1 = 2$ and applying \eqref{FractionalGagliardoNirenberg} yields
	\begin{align*}
		\norm{(1/\nu(T))(\nabla \nu(T) \cdot \nabla) \boldsymbol{u}}_{-1/2 + \delta, \Omega} &\leq C \norm{\boldsymbol{u}}_{1,\Omega} \|T\|_{W^{1,4/(1-2\delta)}(\Omega)} \leq C \norm{\boldsymbol{u}}_{1,\Omega} \norm{T}_{3/2+\delta,\Omega}  \\
		&\leq C \norm{\boldsymbol{u}}_{1,\Omega} \norm{\boldsymbol{y}}_{3/2+\delta,\Omega},  
	\end{align*}
	and the right hand side is finite by Step 1. Furthermore, similar to \eqref{eqReg_4_2d} we have the following:
	\begin{align*}
		\norm{p \frac{\nu_T(T) \nabla T}{(\nu(T))^2}}_{-1/2+\delta,\Omega} \leq \frac{\norm{\nu_T(T)}_{L^{\infty}(\Omega)}}{\nu_1^2} \norm{p \nabla T}_{-1/2+\delta,\Omega} \leq C \norm{p}_{0,\Omega} \norm{T}_{3/2+\delta,\Omega}.
	\end{align*}
	Now invoking   Lemma \ref{EnergyEst:State} and \eqref{explicitbound_y_2D} gives
	\begin{align}\label{explicitbound_u_2D}
		\norm{\boldsymbol{u}}_{3/2+\delta,\Omega} \leq \norm{\boldsymbol{\mathcal{F}}}_{-1/2+\delta,\Omega} \leq C_{l_3} \left(M_{\boldsymbol{y}^D} + M_{\boldsymbol{y}^D}^2 + M_{\boldsymbol{y}^D}^3\right).
	\end{align}
	which completes the proof in two dimensions. 
	
	\smallskip
	
	\underline{Part II:} Three dimensions.
	
	\smallskip
	
	\underline{Step 1} [Regularity of $\boldsymbol{y}$ ($[ H^{3/2+\delta}(\Omega)]^2$).] Choosing $p_1 = 3/(1-\delta) = q_2$ and $p_2 = 6/(1+2\delta) = q_1$ in \eqref{RegEq1} and applying \eqref{FractionalGagliardoNirenberg} with $l=0$,  $r_1=2$ and $\theta = 1$ yields \eqref{RegEq2_2d}, which is finite due to Lemma \ref{EnergyEst:State} and Theorem \ref{StateExistence}. Thus, $\boldsymbol{y} \in [H^{3/2-\delta}(\Omega)]^2.$ Let us fist consider  $0<\delta\leq\frac{1}{4}$. Then following steps used to obtain \eqref{RegEq1}
	\begin{align}
		\norm{\left(\boldsymbol{u} \cdot \nabla\right) \boldsymbol{y}}_{{-1/2 + \delta},\Omega} \leq  C \left(\norm{\boldsymbol{u}}_{W^{1/2 + \delta, p_1}(\Omega)} \norm{\boldsymbol{y}}_{L^{p_2}(\Omega)} + \norm{\boldsymbol{u}}_{L^{q_1}(\Omega)} \norm{\boldsymbol{y}}_{W^{1/2+\delta,q_2}(\Omega)}\right), \label{RegEq2_3D}
	\end{align}
	where, $\frac{1}{p_1} + \frac{1}{p_2} = \frac{1}{2}$ and $\frac{1}{q_1} + \frac{1}{q_2} = \frac{1}{2}$. Choosing $p_1 = \frac{3}{1+\delta}, p_2 = \frac{6}{1-2\delta}, q_1 = 6, q_2 = 3$ in \eqref{RegEq2_3D} and invoking the fractional Sobolev embedding (see for reference Theorem 4.57 in \cite{FractionalSobolevEmbeddings}),
	\begin{align}\label{FractionalSobolevEmbedding}
		\begin{aligned}
			\mbox{ if } \; l r_1 < d, \; \mbox{ then } \;  \boldsymbol{W}^{l, r_1}(\Omega) \hookrightarrow \boldsymbol{L}^{p_2}(\Omega) \;  \mbox{ for every} \; p_2 \leq d r_1 / (d - l r_1),
		\end{aligned}
	\end{align}
	with $l = \frac{3}{2} - \delta$ and $r_1 = 2$, we deduce that the embedding holds for $p_2 \in (0, \infty)$ and 
	\begin{align}\label{40}
		\norm{\left(\boldsymbol{u} \cdot \nabla\right) \boldsymbol{y}}_{-1/2+\delta,\Omega} &\leq C \left( \norm{\boldsymbol{u}}_{1,\Omega} \norm{\boldsymbol{y}}_{L^{6/(1-2\delta)}(\Omega)} +  \norm{\boldsymbol{u}}_{1,\Omega}  \norm{\boldsymbol{y}}_{1+\delta,\Omega}\right) \nonumber\\
		&\leq C \left( \norm{\boldsymbol{u}}_{1,\Omega} \norm{\boldsymbol{y}}_{3/2-\delta,\Omega} +  \norm{\boldsymbol{u}}_{1,\Omega}  \norm{\boldsymbol{y}}_{3/2-\delta,\Omega}\right),
	\end{align}
	since $0<\delta\leq\frac{1}{4}$. The right hand side of the above inequality 	is finite, so that  $\boldsymbol{y} \in [H^{3/2+\delta}(\Omega)]^2$. 
	Let us use the previous result to  establish  regularity results for $\delta\in(\frac{1}{4},\frac{1}{2})$. By the previous result, we know that $\boldsymbol{y} \in [H^{3/2+\delta_a}(\Omega)]^2$, for all $\delta_a\in(0,\frac{1}{4}]$. For $\delta\in(0,\frac{1}{2})$ and  $\delta_b\in(\frac{1}{4},\frac{1}{2})$,  we consider 
	\begin{align*}
		\norm{\left(\boldsymbol{u} \cdot \nabla\right) \boldsymbol{y}}_{-1/2+\delta_b,\Omega} &\leq  C \big(\norm{\boldsymbol{u}}_{W^{1/2+\delta_b, 3/(1+\delta_b)}(\Omega)} \norm{\boldsymbol{y}}_{L^{6/(1-2\delta_b)}(\Omega)} \\
		&\quad+  \norm{\boldsymbol{u}}_{L^{3/(1-(\delta_b - \delta_a))}(\Omega)}  \norm{\boldsymbol{y}}_{W^{1/2+\delta_b,6/(1+2(\delta_b-\delta_a))}(\Omega)}\big) \nonumber\\
		&\leq C \left(\norm{\boldsymbol{u}}_{1,\Omega} \norm{\boldsymbol{y}}_{3/2-\delta,\Omega} + \norm{\boldsymbol{u}}_{1,\Omega}  \norm{\boldsymbol{y}}_{3/2+\delta_a, \Omega}\right).
	\end{align*}	
	Thus, combining the above bounds, we obtain $\boldsymbol{y} \in [H^{3/2 + \delta}(\Omega)],$ for $\delta\in(0,\frac{1}{2})$. Furthermore, similar to Step 1 of Part I, we have the following dependence on data in three dimensions:
	\begin{align}
		\norm{\boldsymbol{y}}_{3/2-\delta,\Omega} \leq C_{l_5}\; M^2_{\boldsymbol{y}^D} \; \mbox{and} \;
		\norm{\boldsymbol{y}}_{3/2+\delta,\Omega} \leq C_{l_6}  \left(M^3_{\boldsymbol{y}^D} + M^4_{\boldsymbol{y}^D}\right) := \hat{M}_{\boldsymbol{y}^D}. \label{explicitbound_y_3D}
	\end{align}
   \smallskip
	\underline{Step 2} [Regularity of $\boldsymbol{u}$ ($\boldsymbol{H}^{3/2-\delta}(\Omega)$).] The Navier-Stokes nonlinearity can be dealt by using the same exponents as in Step 1 of Part II yielding
	$$\norm{(1/\nu(T))(\boldsymbol{u} \cdot \nabla) \boldsymbol{u}}_{-1/2- \delta, \Omega} \leq C\|\boldsymbol{u}\otimes\boldsymbol{u}\|_{1/2-\delta,\Omega}\leq C\|\boldsymbol{u}\|_{W^{1/2-\delta,3/(1-\delta)}(\Omega)}\|\boldsymbol{u}\|_{L^{6/(1+2\delta)}(\Omega)}\leq C \norm{\boldsymbol{u}}^2_{1,\Omega}.$$
	In order to estimate $\norm{(1/\nu(T))(\nabla \nu(T) \cdot \nabla) \boldsymbol{u}}_{-1/2 - \delta, \Omega}$, we use an iterative technique. We first consider $\frac{1}{2(k+1)}\leq\delta<\frac{1}{2}$,  and  show that $\boldsymbol{u}\in\boldsymbol{H}^{3/2-k\delta}(\Omega),$	for $k=1,2,\ldots.$  Then by using this regularity, we prove that $\boldsymbol{u}\in\boldsymbol{H}^{3/2-(k-1)\delta}(\Omega),$ and we continue this procedure for $(k-1)$ times to finally obtain $\boldsymbol{u}\in\boldsymbol{H}^{3/2-\delta}(\Omega).$ Then by taking sufficiently large $k$, one can obtain this result for $0<\delta<\frac{1}{2}$.   Similar to \eqref{eqReg_4_2d}, one can derive the following for $k = 1, 2, 3, \ldots,$ in three dimensions
	\begin{align}\label{eq*_1}
		\norm{(1/\nu(T))(\nabla \nu(T) \cdot \nabla) \boldsymbol{u}}_{-1/2 - k\delta, \Omega} \leq C \norm{\boldsymbol{u}}_{W^{1,p_1}(\Omega)} \norm{T}_{W^{1,p_2}(\Omega)}, \;\; \mbox{with} \;\; \frac{1}{p_1} + \frac{1}{p_2} = \frac{2+k\delta}{3}.
	\end{align}
	Choosing $p_1 = 2$ which implies $p_2 = 6/(1+2 k \delta)$ and applying \eqref{FractionalGagliardoNirenberg} gives
	\begin{align}\label{eq!!}
		\norm{(1/\nu(T)) (\nabla \nu(T) \cdot \nabla) \boldsymbol{u}}_{-1/2 - k \delta, \Omega} \leq C \norm{\boldsymbol{u}}_{1,\Omega} \norm{T}_{2-k\delta,\Omega} \leq C \norm{\boldsymbol{u}}_{1,\Omega} \norm{T}_{3/2+\delta,\Omega},
	\end{align}
	provided $\frac{1}{2}\leq(k+1)\delta$. 	Handling the other bounds as in Step 2 of Part I and using Lemma \ref{EnergyEst:State} and additional regularity of $\boldsymbol{y}$ obtained in Step 1, we infer that $\norm{\boldsymbol{\mathcal{F}}}_{-1/2-\delta,\Omega}$ is finite and thus $\boldsymbol{u} \in \boldsymbol{H}^{3/2-k\delta}(\Omega),$ for $k = 1, 2, \dots.$ Using this regularity of $\boldsymbol{u}$ and following the same steps, we obtain
	\begin{align}\label{eq*_2}
		\norm{(1/\nu(T)) (\nabla \nu(T) \cdot \nabla) \boldsymbol{u}}_{-1/2 - (k-1)\delta, \Omega} \leq C \norm{\boldsymbol{u}}_{W^{1,p_1}(\Omega)} \norm{T}_{W^{1,p_2}(\Omega)}, \;\; \mbox{with} \;\; \frac{1}{p_1} + \frac{1}{p_2} = \frac{2+(k-1)\delta}{3}.
	\end{align}
	Choosing $p_1 = \frac{3}{1+k\delta}$ which implies $p_2 = \frac{3}{1-\delta}$ and applying \eqref{FractionalGagliardoNirenberg} provides 
	\begin{align}\label{eq!!_1}
		\norm{(1/\nu(T)) (\nabla \nu(T) \cdot \nabla) \boldsymbol{u}}_{-1/2 - (k-1)\delta, \Omega} \leq C \norm{\boldsymbol{u}}_{3/2-k\delta,\Omega} \norm{T}_{3/2+\delta,\Omega}.
	\end{align}
	Tackling the other bounds analogously, we can readily obtain $\boldsymbol{u} \in \boldsymbol{H}^{3/2 - (k-1) \delta}(\Omega)$. Now we claim that $\boldsymbol{u} \in \boldsymbol{H}^{3/2-(k-2)\delta}(\Omega)$ using the regularity of $\boldsymbol{u}$ that we have  just obtained. Indeed, similar to \eqref{eq*_2}, we have
	\begin{align}\label{eq*_3}
		\norm{(1/\nu(T)) (\nabla \nu(T) \cdot \nabla) \boldsymbol{u}}_{-1/2 - (k-2)\delta, \Omega} \leq C \norm{\boldsymbol{u}}_{W^{1,p_1}(\Omega)} \norm{T}_{W^{1,p_2}(\Omega)}, \;\; \mbox{with} \;\; \frac{1}{p_1} + \frac{1}{p_2} = \frac{2+(k-2)\delta}{3}.
	\end{align}
	Choosing $p_1 = \frac{3}{1+(k-1)\delta}$ which implies $p_2 = \frac{3}{1-\delta}$ and applying \eqref{FractionalGagliardoNirenberg} leads to
	$$\norm{(1/\nu(T)) (\nabla \nu(T) \cdot \nabla) \boldsymbol{u}}_{-1/2 - (k-2)\delta, \Omega} \leq C \norm{\boldsymbol{u}}_{3/2-(k-1)\delta,\Omega} \norm{T}_{3/2+\delta,\Omega}.$$
	Repeating this procedure $(k-1)$ times will subsequently lead the desired regularity of $\boldsymbol{u}$, that is, $\boldsymbol{u} \in \boldsymbol{H}^{3/2-\delta}(\Omega)$. Furthermore, 
	\begin{align*}
		\norm{p \frac{\nu_T(T) \nabla T}{(\nu(T))^2}}_{-1/2-\delta,\Omega} \leq \frac{\norm{\nu_T(T)}_{L^{\infty}(\Omega)}}{\nu_1^2} \norm{p \nabla T}_{-1/2-\delta,\Omega},
	\end{align*}
	where $\norm{p \nabla T}_{-1/2-\delta,\Omega}$ is handled analogously to $\norm{(\nabla \nu(T) \cdot \nabla) \boldsymbol{u}}_{-1/2 - \delta, \Omega}$ using the iterative technique.
	Rest of the bounds can be handled similar to Step 2 of Part I. Moreover, the following also holds due to Lemma \ref{EnergyEst:State} and \eqref{explicitbound_y_3D}
	\begin{align}\label{explicitbound_u_3D}
		\norm{\boldsymbol{u}}_{3/2-\delta,\Omega} \leq \norm{\boldsymbol{\mathcal{F}}}_{-1/2-\delta,\Omega} \leq C_{l_7} \left(M_{\boldsymbol{y}^D} + M_{\boldsymbol{y}^D}^2 + M_{\boldsymbol{y}^D} \hat{M}^k_{\boldsymbol{y}^D}\right) =: \tilde{M}_{\boldsymbol{y}^D}.
	\end{align}
    \smallskip
	\underline{Step 3} [Regularity of $\boldsymbol{u}$  ($\boldsymbol{H}^{3/2+\delta}(\Omega)$).] The Navier-Stokes nonlinearity is handled in a similar manner to Step 1 of Part II (see the first term of \eqref{40}) by using the extra regularity of $\boldsymbol{u}$ derived in the previous step
	$$\norm{(1/\nu(T))(\boldsymbol{u} \cdot \nabla) \boldsymbol{u}}_{-1/2 + \delta, \Omega} \leq C \norm{\boldsymbol{u}}_{1,\Omega} \norm{\boldsymbol{u}}_{3/2-\delta,\Omega},$$ for $\delta\in(0,\frac{1}{2})$. For $\delta_a\in(0,\frac{1}{4})$, similar to \eqref{eqReg_4_2d}, we can derive the following in three dimensions
	\begin{align}\label{3d_deltaA}
		\norm{(1/\nu(T)) (\nabla \nu(T) \cdot \nabla) \boldsymbol{u}}_{-1/2 + \delta_a, \Omega} \leq C \norm{\boldsymbol{u}}_{W^{1,p_1}(\Omega)} \norm{T}_{W^{1,p_2}(\Omega)}, \;\; \mbox{with} \;\; \frac{1}{p_1} + \frac{1}{p_2} = \frac{2-\delta_a}{3}.
	\end{align}
	Choosing $p_1 = 3/(1+ \delta_a)$ and applying \eqref{FractionalGagliardoNirenberg} with $l = 1, \theta = 1$ yields
	\begin{align}\label{42}
		\norm{\nabla \boldsymbol{u}}_{L^{3/(1+\delta_a)}(\Omega)} \leq C \norm{\boldsymbol{u}}_{3/2 - \delta_a, \Omega}.
	\end{align}
	Now using \eqref{FractionalGagliardoNirenberg} with $l=1$, $\theta = 1$ and $p_2 = 3/(1-2\delta_a)$, we get
	\begin{align}\label{43}
		\norm{\nabla T}_{L^{3/(1-2\delta_a)}(\Omega)} \leq C \norm{T}_{3/2 + 2\delta_a,\Omega}, 
	\end{align}
	where $2 \delta_a$ belongs to $(0,\frac{1}{2})$. Putting the bounds \eqref{42} and \eqref{43} in \eqref{3d_deltaA} results to 
	\begin{align}
		\norm{(1/\nu(T)) (\nabla \nu(T) \cdot \nabla) \boldsymbol{u}}_{-1/2 + \delta_a, \Omega} &\leq C \norm{\boldsymbol{u}}_{3/2 - \delta_a, \Omega} \norm{T}_{3/2 + 2 \delta_a,\Omega}<+\infty.
	\end{align}
	From the above bounds, \eqref{explicitbound_y_3D}, and \eqref{explicitbound_u_3D} analogous to the previous steps, one can conclude that $\boldsymbol{u} \in \boldsymbol{H}^{3/2 + \delta_a}(\Omega),$ for $\delta_a\in(0,\frac{1}{4})$ and 
	\begin{align}\label{explicitbound_u_3D_A}
		\norm{\boldsymbol{u}}_{3/2+\delta_a,\Omega}  \leq C_{l_8} \left(M_{\boldsymbol{y}^D} \tilde{M}_{\boldsymbol{y}^D}+\tilde{M}_{\boldsymbol{y}^D} \hat{M}_{\boldsymbol{y}^D} + M_{\boldsymbol{y}^D}\right) =: \bar{M}_{\boldsymbol{y}^D}.
	\end{align}
	Now, for $\delta_b\in[\frac{1}{4},\frac{1}{2})$, proceeding similarly results in
	\begin{align}\label{3d_deltaB}
		\norm{(1/\nu(T))  (\nabla \nu(T) \cdot \nabla) \boldsymbol{u}}_{-1/2 + \delta_b, \Omega} \leq C \norm{\boldsymbol{u}}_{W^{1,p_1}(\Omega)} \norm{T}_{W^{1,p_2}(\Omega)}, \;\; \mbox{with} \;\; \frac{1}{p_1} + \frac{1}{p_2} = \frac{2-\delta_b}{3}.
	\end{align}
	Choosing $p_1 = 3/(1-\delta_a), p_2 = 3/(1-(\delta_b - \delta_a))$, and applying \eqref{FractionalGagliardoNirenberg} with $l = 1, \theta = 1$ gives
	\begin{align*}
		\norm{(1/\nu(T)) (\nabla \nu(T) \cdot \nabla) \boldsymbol{u}}_{-1/2 + \delta_b, \Omega} &\leq C \norm{\nabla \boldsymbol{u}}_{L^{3/(1-\delta_a)}(\Omega)} \norm{\nabla T}_{L^{3/(1-(\delta_b - \delta_a))}(\Omega)}	\\
		&\leq C \norm{\boldsymbol{u}}_{3/2 + \delta_a,\Omega} \norm{T}_{3/2+(\delta_b - \delta_a), \Omega},
	\end{align*}
	where $(\delta_b - \delta_a)\in(0,\frac{1}{2})$. This is finite, since $\boldsymbol{u}\in\boldsymbol{H}^{3/2 + \delta_a}(\Omega),$ for $\delta_a\in(0,\frac{1}{4})$ and $T\in H^{3/2+\delta}(\Omega)$ for $\delta\in(0,\frac{1}{2})$. Thus, $\boldsymbol{u} \in \boldsymbol{H}^{3/2 + \delta}(\Omega),$ for any $\delta\in(0,\frac{1}{2})$ and 
	\begin{align}\label{exlplicitbound_u_3D_B}
		\norm{\boldsymbol{u}}_{3/2+\delta,\Omega}  \leq C_{l_8} \left(M_{\boldsymbol{y}^D} \tilde{M}_{\boldsymbol{y}^D}+\bar{M}_{\boldsymbol{y}^D,\boldsymbol{U}} \hat{M}_{\boldsymbol{y}^D} + M_{\boldsymbol{y}^D}\right),
	\end{align}
	which completes the proof in three dimensions. 
\end{proof}

	\begin{remark}\label{rem:Leibniz}
		Note that  we have used the fractional Leibniz rule (1.1) from \cite{AnconaLiebnitz} to estimate the terms \eqref{RegEq1} and \eqref{RegEq2_3D}. For fractional Sobolve spaces defined on bounded domains (see Section 2, \cite{HitchikersGuideFractionalSobolev}), using the null-expansion of functions defined over $\Omega$ to $\mathbb{R}^n$,  one can show that  the estimate (1.1) in  \cite{AnconaLiebnitz}  remains valid for bounded domains also.
		We recall the fractional Leibniz rule in this context.
			Let $\Omega \subset \mathbb{R}^d$ be a bounded Lipschitz domain.
			Let $1< p_1,p_2,q_1,q_2 \le \infty$ satisfy
			$$
			\frac{1}{r}
			=
			\frac{1}{p_1}+\frac{1}{p_2}
			=
			\frac{1}{q_1}+\frac{1}{q_2},
			\quad
			s > \max \left\{0,\frac{d}{r}-d\right\}
			\quad\text{or}\quad
			s \in 2\mathbb{Z}_+ .
		   $$
			Then there exists a constant $C>0$, depending only on
			$\Omega,d,s$ and the indices, such that for all
			$
			u \in W^{s,p_1}(\Omega)\cap L^{q_1}(\Omega),
			v \in W^{s,q_2}(\Omega)\cap L^{p_2}(\Omega),
			$
			the product $uv$ belongs to $W^{s,r}(\Omega)$ and
		    $$
			\|uv\|_{W^{s,r}(\Omega)}
			\le
			C\Big(
			\|u\|_{W^{s,p_1}(\Omega)}\|v\|_{L^{p_2}(\Omega)}
			+
			\|u\|_{L^{q_1}(\Omega)}\|v\|_{W^{s,q_2}(\Omega)}
			\Big).
			$$	
		 A similar reasoning holds for \eqref{FractionalGagliardoNirenberg} given in \cite{FractionalGagliardoNirenberg}. Furthermore, since we are working with the full norms for $\boldsymbol{y}$, the nonhomogeneous Dirichlet boundary  condition for $\boldsymbol{y}$ does not affect the application of fractional Leibniz rule, see for example \eqref{RegEq1}.
	\end{remark}

Moreover, if the domain under  consideration is convex, then using the regularity result in Theorem 3.2.1.2 in \cite{grisvard2011elliptic} and Theorem 1.8 in \cite{girault2012finite} for elliptic equation, \cite{cattabriga1961problema}  (domains with $C^2$-boundary), Theorem 2 in \cite{kellogg1976regularity} (2D convex domain) and Theorem 6.3 in \cite{MoniqueDaugeRegularity} (3D convex domain) for the Stokes equation and following the same ideas developed in previous theorem, the regularity of solutions can be improved further.    

\begin{theorem}\label{H2regularity}
	Let $\boldsymbol{U} \in \boldsymbol{L}^r(\Omega),\; \boldsymbol{y}^D \in [H^{3/2}(\Gamma)]^2$ be given. Then for any convex domain $\Omega$ in $\mathbb{R}^d$, the weak solution to \eqref{P:GE} satisfies  $$(\boldsymbol{u}, p, \boldsymbol{y}) \in \left[\boldsymbol{H}_0^1(\Omega) \cap \boldsymbol{H}^{2}(\Omega)\right] \times [H^{1}(\Omega)\cap L^2_0(\Omega)] \times \left[H^{2}(\Omega)\right]^2.$$
\end{theorem}

\subsection{Uniqueness of solutions}
We use the regularity result derived in Theorem \ref{Regularity} to discuss the uniqueness of a regular weak solution to \eqref{P:Sred} in Theorem \ref{StateUniqueness}.

\begin{theorem}\label{StateUniqueness}
	Given $\boldsymbol{y}^D  \in \left[\boldsymbol{H}^{1+\delta}(\Gamma)\right]^2$. Let $(\boldsymbol{u},\boldsymbol{y}) \in [\boldsymbol{X} \cap \boldsymbol{H}^{3/2 + \delta}(\Omega)] \times [H^1(\Omega) \cap H^{3/2 + \delta}(\Omega)]^2$ be a solution of the reduced problem \eqref{P:Sred}, where $\delta\in(0,\frac{1}{2})$ and $\Omega$ be the domain as defined in Theorem \ref{Regularity}. Then if the condition 
	\begin{align}\label{StateUniquenessCondition}
		\begin{aligned}
			&\alpha_a > C_{6_d} C_{3_d} \left(\frac{\gamma_{\nu} C_{p_{2_d}} C_{gn} M M_{\boldsymbol{y}}}{\hat{\alpha}_a} + M_{\boldsymbol{u}} + \frac{\gamma_F M_{\boldsymbol{y}}}{\hat{\alpha}_a}\right),
		\end{aligned}
	\end{align}
	 is satisfied then the solution of \eqref{P:Sred} is unique. Here, the quantities $M_{\boldsymbol{u}}, M_{\boldsymbol{y}}$ and $M$ depend on the data and are defined in \eqref{M_u}, \eqref{M_y} and \eqref{M}, respectively.  
\end{theorem}
\begin{proof}
	Let $(\boldsymbol{u}_1, \boldsymbol{y}_1)$ and $(\boldsymbol{u}_2,\boldsymbol{y}_2)$ solve \eqref{P:Sred} and let $(\boldsymbol{\alpha},\boldsymbol{\beta}) := (\boldsymbol{u}_1-\boldsymbol{u}_2, \boldsymbol{y}_1-\boldsymbol{y}_2)$. Then subtracting the corresponding variational formulations for all $\boldsymbol{v} \in \boldsymbol{X}$, $\boldsymbol{s} \in [H_0^1(\Omega)]^2$, we have	
	\begin{align}\label{StateUniquenessEq1}
		\begin{aligned}
			a(\boldsymbol{y}_1;\boldsymbol{u}_1,\boldsymbol{v}) - a(\boldsymbol{y}_2;\boldsymbol{u}_2,\boldsymbol{v}) + c(\boldsymbol{u}_1,\boldsymbol{u}_1,\boldsymbol{v}) - c(\boldsymbol{u}_2, \boldsymbol{u}_2,\boldsymbol{v}) &= d(\boldsymbol{y}_1,\boldsymbol{v}) - d(\boldsymbol{y}_2,\boldsymbol{v}), \\
			a_{\boldsymbol{y}}(\boldsymbol{y}_1,\boldsymbol{s}) - a_{\boldsymbol{y}}(\boldsymbol{y}_2,\boldsymbol{s}) + c_{\boldsymbol{y}}(\boldsymbol{u}_1,\boldsymbol{y}_1,\boldsymbol{s}) - c_{\boldsymbol{y}}(\boldsymbol{u}_2,\boldsymbol{y}_2,\boldsymbol{s}) &= 0.
		\end{aligned}
	\end{align}	
	Notice that in \eqref{StateUniquenessEq1}, we can rewrite the differences as follows:
	\begin{align*}
		a(\boldsymbol{y}_1;\boldsymbol{u}_1,\boldsymbol{v}) - a(\boldsymbol{y}_2;\boldsymbol{u}_2,\boldsymbol{v}) &= a(\boldsymbol{y}_1;\boldsymbol{\alpha},\boldsymbol{v}) + a(\boldsymbol{y}_1;\boldsymbol{u}_2,\boldsymbol{v}) - a(\boldsymbol{y}_2;\boldsymbol{u}_2,\boldsymbol{v}), \\
		c(\boldsymbol{u}_1,\boldsymbol{u}_1,\boldsymbol{v}) - c(\boldsymbol{u}_2,\boldsymbol{u}_2,\boldsymbol{v}) &= c(\boldsymbol{u}_1,\boldsymbol{\alpha},\boldsymbol{v}) + c(\boldsymbol{\alpha},\boldsymbol{u}_2,\boldsymbol{v}), \\
		c_{\boldsymbol{y}}(\boldsymbol{u}_1,\boldsymbol{y}_1,\boldsymbol{s}) - c_{\boldsymbol{y}}(\boldsymbol{u}_2,\boldsymbol{y}_2,\boldsymbol{s}) &= c_{\boldsymbol{y}}(\boldsymbol{u}_1,\boldsymbol{\beta},\boldsymbol{s}) + c_{\boldsymbol{y}}(\boldsymbol{\alpha},\boldsymbol{y}_2,\boldsymbol{s}),
	\end{align*}
	and then we can choose as test function $\boldsymbol{v} = \boldsymbol{\alpha} \in \boldsymbol{X}$ and exploit \eqref{Prop:c1} to find 
	\begin{eqnarray*}
		a(\boldsymbol{y}_1;\boldsymbol{\alpha},\boldsymbol{\alpha}) + (a(\boldsymbol{y}_1;\boldsymbol{u}_2,\boldsymbol{\alpha}) - a(\boldsymbol{y}_2;\boldsymbol{u}_2,\boldsymbol{\alpha}))  + c(\boldsymbol{\alpha},\boldsymbol{u}_2,\boldsymbol{\alpha}) = d(\boldsymbol{y}_1,\boldsymbol{\alpha}) - d(\boldsymbol{y}_2,\boldsymbol{\alpha}).
	\end{eqnarray*}
	Using \eqref{Coer:a} in the above equation, we get
	\begin{eqnarray}\label{StateUniquenessEq2_}
		\alpha_a \norm{\boldsymbol{\alpha}}^2_{1,\Omega} \leq |(a(\boldsymbol{y}_1;\boldsymbol{u}_2,\boldsymbol{\alpha}) - a(\boldsymbol{y}_2;\boldsymbol{u}_2,\boldsymbol{\alpha}))| + |c(\boldsymbol{\alpha},\boldsymbol{u}_2,\boldsymbol{\alpha})| + |d(\boldsymbol{y}_1,\boldsymbol{\alpha}) - d(\boldsymbol{y}_2,\boldsymbol{\alpha})|.
	\end{eqnarray}
	We can bound the first term on the right hand side of \eqref{StateUniquenessEq2_} using \eqref{Cty:a1ma2} to obtain 
	\begin{align*}
		|(a(\boldsymbol{y}_1;\boldsymbol{u}_2,\boldsymbol{\alpha}) - a(\boldsymbol{y}_2;\boldsymbol{u}_2,\boldsymbol{\alpha}))| \leq			\gamma_{\nu} C_{p_{2_d}} C_{gn} \norm{\boldsymbol{\beta}}_{1,\Omega} \norm{ \boldsymbol{u}_2}_{3/2 + \delta,\Omega} \norm{\boldsymbol{\alpha}}_{1,\Omega}. 			
	\end{align*}
	The remaining terms on the right hand side of \eqref{StateUniquenessEq2_} can be bounded as follows:
	\begin{align*}
		|c(\boldsymbol{\alpha},\boldsymbol{u}_2,\boldsymbol{\alpha})|  &\leq C_{6_d} C_{3_d} \norm{\boldsymbol{\alpha}}^2_{1,\Omega} \norm{\nabla \boldsymbol{u}_2}_{0,\Omega} \leq C_{6_d} C_{3_d} \norm{\boldsymbol{\alpha}}^2_{1,\Omega} \norm{\boldsymbol{u}_2}_{1,\Omega}, \nonumber\\
		|d(\boldsymbol{y}_1, \boldsymbol{\alpha}) - d(\boldsymbol{y}_2, \boldsymbol{\alpha})| &= | \left(\boldsymbol{F}(\boldsymbol{y}_1) - \boldsymbol{F}(\boldsymbol{y}_2), \boldsymbol{\alpha}\right)| \leq \norm{\boldsymbol{F}(\boldsymbol{y}_1) - \boldsymbol{F}(\boldsymbol{y}_2)}_{0,\Omega} \norm{\boldsymbol{\alpha}}_{0,\Omega} \nonumber\\
		&\leq \gamma_{F} \norm{\boldsymbol{\beta}}_{0,\Omega} \norm{\boldsymbol{\alpha}}_{0,\Omega} \leq  \gamma_{F} \norm{\boldsymbol{\beta}}_{1,\Omega} \norm{\boldsymbol{\alpha}}_{1,\Omega}.
	\end{align*}
	Putting the above bounds in \eqref{StateUniquenessEq2_} results to
	\begin{align}
		\alpha_a \norm{\boldsymbol{\alpha}}^2_{1,\Omega} &\leq \gamma_{\nu} C_{p_{2_d}} C_{gn} \norm{\boldsymbol{\beta}}_{1,\Omega} \norm{ \boldsymbol{u}_2}_{3/2+\delta,\Omega} \norm{\boldsymbol{\alpha}}_{1,\Omega} + C_{6_d} C_{3_d} \norm{\boldsymbol{\alpha}}^2_{1,\Omega} \norm{\boldsymbol{u}_2}_{1,\Omega}  + \gamma_{F} \norm{\boldsymbol{\beta}}_{1, \Omega} \norm{\boldsymbol{\alpha}}_{1,\Omega} \nonumber\\
		&\leq \gamma_{\nu} C_{p_{2_d}} C_{gn} M \norm{\boldsymbol{\beta}}_{1,\Omega} \norm{\boldsymbol{\alpha}}_{1,\Omega} + C_{6_d} C_{3_d} M_{\boldsymbol{u}} \norm{\boldsymbol{\alpha}}^2_{1,\Omega}  + \gamma_{F} \norm{\boldsymbol{\beta}}_{1, \Omega} \norm{\boldsymbol{\alpha}}_{1,\Omega}. \label{eq2}
	\end{align}
	Analogously, putting $\boldsymbol{s} = \boldsymbol{\beta} \in [H_0^1(\Omega)]^2$ in second equation of \eqref{StateUniquenessEq1} and applying \eqref{Coer:ay} gives
	\begin{align}\label{StateUniquenessEq5}
		\hat{\alpha}_a \norm{\boldsymbol{\beta}}_{1,\Omega} &\leq |c_{\boldsymbol{y}}(\boldsymbol{\alpha},\boldsymbol{y}_2,\boldsymbol{\beta})| \leq C_{6_d} C_{3_d} M_{\boldsymbol{y}} \norm{\boldsymbol{\alpha}}_{1,\Omega}.
	\end{align}
	Substituting \eqref{StateUniquenessEq5} in \eqref{eq2} leads to the assertion that uniqueness of problem \eqref{P:Sred} holds under the largeness condition given in \eqref{StateUniquenessCondition}. 
	This completes the proof. 
\end{proof}

\begin{remark}\label{smalldata}
	The condition \eqref{StateUniquenessCondition} in Theorem \ref{StateUniqueness} implies a smallness assumption, that is, we can guarantee the uniqueness of a regular weak solution to \eqref{P:Sred}, if the data $\boldsymbol{y}^D$ is ``sufficiently small" or $\alpha_a$ is ``sufficiently large'' so that \eqref{StateUniquenessCondition} holds. 
\end{remark}

	\begin{remark}
		In this remark, we compare the continuous analysis developed in \cite{Agroum2015} for a closely related model problem.
		The existence of at least one weak solution in the natural energy spaces for admissible data is established using a fixed point theorem. The uniqueness result \cite[Proposition 2.3]{Agroum2015} still assumes a strong assumption that the solution itself satisfies $\| \boldsymbol{u} \|_{W^{1,q}(\Omega)} < 1$.
		 In \cite[Proposition 2.4]{Agroum2015}, the authors improve the regularity to $\boldsymbol{u} \in \boldsymbol{W}^{1,q}(\Omega)$ (for $2\le q\le q_0$) and $T \in W^{2,q}(\Omega)$ given the data $\boldsymbol{f} \in \boldsymbol{W}^{-1,q}(\Omega)$ and $T_D \in W^{2-1/q,q}(\Omega)$, with the admissible exponent range depending on the geometry of $\Omega$ (and improving for convex $\Omega$). In our case, the regularity result improves also the velocity itself to $\boldsymbol{u} \in \boldsymbol{H}^{3/2+\delta}(\Omega)$ (and to $\boldsymbol{u} \in \boldsymbol{H}^2(\Omega)$ if $\Omega$ is convex). The discrete analysis however is based on a weaker assumption \cite[Assumption 3.2]{Agroum2015} and local-uniqueness arguments. 
	\end{remark}

\section{Discretization}
In this section we propose a discretization of the governing equation \eqref{P:S} using nonconforming finite elements and study the well-posedness of resulting discrete system. To this aim, let $\mathcal{T}_h$ denote a partition of the domain $\Omega$ into a finite collection of open subdomains $K_i, \; i = 1, ..., N^{E}$ (triangles in 2D and tetrahedra  in 3D) such that:
$$\bar{\Omega} = \bigcup_{K_i \in \mathcal{T}_h} \bar{K}_{i} \;\; \mbox{and} \;\; K_i \cap K_j = \phi, \; i \neq j.$$
The mesh parameter $h$ associated with $\mathcal{T}_h$ is defined as $h = \max_{K \in \mathcal{T}_h} h_K,$ where $h_K$ denotes the diameter of $K$. We assume that $\mathcal{T}_h$ is shape regular in the sense that there exists a real number $\bar{\rho} > 0$, independent of $h$ and $K$ such that as $h \rightarrow 0$,
$$\frac{h_K}{\rho_K} \leq \bar{\rho}, \;\; \forall \;\; K \in \mathcal{T}_h,$$
where $\rho_K := \sup \left\{diam(\mathcal{B}): \mathcal{B} \; \mbox{is a ball contained in K}\right\}.$  Let $\mathcal{E}(\mathcal{T}_h)$ be the set of all edges in (2D) or the set of all faces in (3D), which is divided into $\mathcal{E}_I(\mathcal{T}_h)$ and $\mathcal{E}_B(\mathcal{T}_h)$, the set of all interior and boundary edges, respectively. For any  $e \in \mathcal{E}_I(\mathcal{T}_h)$, we label by $K^{+}$ and $K^{-}$  the elements adjacent to $e$. Furthermore, $h_e$ denotes the edge length or the maximum diameter of the facet in (2D) and (3D), respectively. Let $\boldsymbol{v}, w$ represent smooth vector and scalar fields, respectively, defined over $\mathcal{T}_h$. Then, by $(\boldsymbol{v}^+, \boldsymbol{v}^-)$ and $(w^+, w^-)$ we will denote the traces of $\boldsymbol{v}$ and $w$ on $e$ taken from inside $K^+$ and $K^-$, respectively. Similarly, we denote by $\boldsymbol{n}_e^+$ and $\boldsymbol{n}_e^-$ the outward unit normal vectors to $e$ corresponding to $K^+$ and $K^-$, respectively. The averages and jumps of $\boldsymbol{v}$ and $w$ across $e$ are defined, respectively, as follows 
\begin{align*}
	\avg{\boldsymbol{v}} := \frac{(\boldsymbol{v}^+ + \boldsymbol{v}^-)}{2}, \;\; \jump{\boldsymbol{v}} := \left(\boldsymbol{v}^- \cdot \boldsymbol{n}^-_e + \boldsymbol{v}^+ \cdot \boldsymbol{n}^+_e\right),  
	\avg{w} := \frac{(w^+ + w^-)}{2}, \;\; \jump{w} := \left(w^- \boldsymbol{n}^-_e + w^+  \boldsymbol{n}^+_e\right).
\end{align*}
The respective notations for boundary jumps and averages are, $\avg{\boldsymbol{v}} = \boldsymbol{v},  \jump{\boldsymbol{v}} = \boldsymbol{v} \cdot \boldsymbol{n}_{\Gamma},  \avg{w} = w, \; \jump{w} = w \; \boldsymbol{n}_{\Gamma}.$ In addition, $\nabla_h$ and $\bdiv_h$ denote the broken gradient and divergence operators, that is, the operators $\nabla$ and $\bdiv$ taken element wise. For any natural number $k \geq 1$, $\mathbb{P}_k(K)$ denotes the space of piecewise polynomials of degree less than or equal to $k$.

\subsection{Nonconforming method}\label{Sec:nc:S}
Consider the following lowest order Crouzeix-Raviart space on the partition $\mathcal{T}_h$,
\begin{align*}
	&\boldsymbol{V}^{cr}_h := \left\{ \boldsymbol{v}_h \in \boldsymbol{L}^2(\Omega) : \boldsymbol{v}_h |_K \in \left[\mathbb{P}_1(K)\right]^d, \boldsymbol{v}_h \; \mbox{is continuous at midpoint} \; \forall \; e \in \mathcal{E}_I(\mathcal{T}_h) \right\}, \\
	&\boldsymbol{V}^{cr_0}_h := \left\{ \boldsymbol{v}_h \in \boldsymbol{V}^{cr}_h : \boldsymbol{v}_h  \; \mbox{at midpoint is zero} \; \forall \; e \in \mathcal{E}_B(\mathcal{T}_h) \right\},
\end{align*}
and let $\mathcal{M}_h^{cr}$ and $\mathcal{M}_h^{cr_0}$ denote their counterparts for all $\boldsymbol{s}_h \in \left[L^2(\Omega)\right]^2$. Moreover, we also define the following discontinuous finite element space
$$Q_h^{k} := \left\{q_h \in L_0^2(\Omega) : q_h|_K \in \mathbb{P}_{k}(K) \;\; \forall \;\; K \in \mathcal{T}_h\right\}.$$
Corresponding to these finite dimensional spaces, we consider the following nonconforming finite element method which is based on an upwind discretization to handle the convective terms (cf. \cite{DS_upwind}).  Find $(\boldsymbol{u}_h,p_h,\boldsymbol{y}_h) \in \boldsymbol{V}^{cr_0}_h \times Q^0_h \times \mathcal{M}^{cr}_h$ such that $\boldsymbol{y}_h |_{\Gamma} = \boldsymbol{y}_h^D$ and 
\begin{align}\label{ncPh:S}
	\left\{
	\begin{aligned}
		a^{h}(\boldsymbol{y}_h; \boldsymbol{u}_h, \boldsymbol{v}_h) + c^{h}(\boldsymbol{u}_h, \boldsymbol{u}_h, \boldsymbol{v}_h) + b^{h}(\boldsymbol{v}_h,p_h) &= d^h(\boldsymbol{y}_h,\boldsymbol{v}_h)  \;\; \forall \;\; \boldsymbol{v}_h \in \boldsymbol{V}^{cr_0}_h, \\
		b^h(\boldsymbol{u}_h,q_h) &= 0 \;\; \forall \;\; q_h \in Q^0_h, \\
		a^h_{\boldsymbol{y}}(\boldsymbol{y}_h,\boldsymbol{s}_h) + c^h_{\boldsymbol{y}}(\boldsymbol{u}_h,\boldsymbol{y}_h,\boldsymbol{s}_h) &= 0 \;\; \forall \;\; \boldsymbol{s}_h \in \mathcal{M}^{cr_0}_h, 
	\end{aligned}
	\right.
\end{align}
with
\begin{align*}
	a^h(\boldsymbol{s}_h; \boldsymbol{u}_h, \boldsymbol{v}_h) &:= \int_{\Omega} \left(\boldsymbol{K}^{-1} \boldsymbol{u}_h \cdot \boldsymbol{v}_h + \nu(\boldsymbol{s}_h) \nabla_h \boldsymbol{u}_h : \nabla_h \boldsymbol{v}_h \right)  dx,\;\; a^h_{\boldsymbol{y}}(\boldsymbol{y}_h, \boldsymbol{s}_h) := \int_{\Omega} \boldsymbol{D}~ \nabla_h\boldsymbol{y}_h : \nabla_h\boldsymbol{s}_h ~dx, \\
	c^h(\boldsymbol{w}_h, \boldsymbol{u}_h, \boldsymbol{v}_h) &:= \int_{\Omega} \left(\boldsymbol{w}_h \cdot \nabla_h \right) \boldsymbol{u}_h  \cdot \boldsymbol{v}_h ~dx + \sum_{K \in \mathcal{T}_h} \int_{\partial K \backslash \Gamma} \hat{\boldsymbol{w}}^{up}_{h} (\boldsymbol{u}_h) \cdot \boldsymbol{v}_h ~ds, \\
	c^h_{\boldsymbol{y}}(\boldsymbol{w}_h, \boldsymbol{y}_h, \boldsymbol{s}_h) &:=  \int_{\Omega} \left(\boldsymbol{w}_h \cdot \nabla_h \right) \boldsymbol{y}_h \cdot \boldsymbol{s}_h ~dx + \sum_{K \in {\mathcal{T}_h}} \int_{\partial K \backslash \Gamma} \hat{\boldsymbol{w}}^{up}_{h} (\boldsymbol{y}_h) \cdot \boldsymbol{s}_h ~ds,
\end{align*}
where $b^h(\cdot,\cdot), \; d^h(\cdot,\cdot)$ allows for discontinuous elements and are defined analogously to $b(\cdot,\cdot), \; d(\cdot,\cdot)$, respectively. The fluxes are defined as follows:
$$\hat{\boldsymbol{w}}^{up}_h(\boldsymbol{u}_h) := \frac{1}{2} \left(\boldsymbol{w}_h \cdot  \boldsymbol{n}_K - |\boldsymbol{w}_h \cdot \boldsymbol{n}_K|\right)(\boldsymbol{u}_h^e - \boldsymbol{u}_h) \;\; \mbox{and} \;\; \hat{\boldsymbol{w}}^{up}_h(\boldsymbol{y}_h) := \frac{1}{2} \left(\boldsymbol{w}_h \cdot  \boldsymbol{n}_K - |\boldsymbol{w}_h \cdot \boldsymbol{n}_K|\right)(\boldsymbol{y}_h^e - \boldsymbol{y}_h),$$
and $\boldsymbol{u}_h^e$ and $\boldsymbol{y}_h^e$ are the traces of $\boldsymbol{u}_h$ and $\boldsymbol{y}_h = (T_h,S_h)^\top$, respectively, taken from within the exterior of $K$ over the edge under consideration and is zero on the boundary.  Without loss of generality, by $\boldsymbol{\Pi}_{nc},$ we denote the nonconforming interpolation operator from $\left[H^1(\Omega)\right]^2$ to $\mathcal{M}_h^{cr}$ ($\boldsymbol{H}^1(\Omega)$ to $\boldsymbol{V}_h^{cr}$) such that
\begin{align}\label{nonConf_interpolation}
	\boldsymbol{\Pi}_{nc} \boldsymbol{s}(m_e) = \frac{1}{|e|} \int_e \boldsymbol{s} \; ds \;\; \forall \;\; e \in \mathcal{E}(\mathcal{T}_h),
\end{align}
where $m_e$ denotes the mid-point of the edge $e$ (cf. \cite{m2an_discretelifting,AinsworthNonConf}). Let $$\boldsymbol{E}_h := \left\{\boldsymbol{s} \in \left[L^2(\Gamma)\right]^2\;:\; \boldsymbol{s}|_e \in \mathbb{P}_0(e) \; \forall \; e \in \mathcal{E}_B(\mathcal{T}_h) \right\},$$ denote the space of Lagrange multipliers on $\Gamma$. Following (\cite[Section 2.3]{m2an_discretelifting}), we use an analogue of $\boldsymbol{\Pi}_{nc}$ restricted to the boundary in order to define $\boldsymbol{y}_h^D := \boldsymbol{\Pi}^{\partial}_{nc} \boldsymbol{y}^D$, where $\boldsymbol{\Pi}^{\partial}_{nc} : \left[L^2(\Gamma)\right]^2 \longrightarrow \boldsymbol{E}_h$ such that
$$\boldsymbol{\Pi}^{\partial}_{nc} \boldsymbol{s}(m_e) = \frac{1}{|e|} \int_e \boldsymbol{s} \; ds \;\; \forall \; e \in \mathcal{E}_B(\mathcal{T}_h),$$ where, $|
e|$ denotes the Lebesgue measure of the edge $e$. Using an integration by parts, mid-point continuity on interior facets and the boundary condition we have that  $\bdiv_h \boldsymbol{V}_h^{cr_0} \subset \mathbb{P}_0(\mathcal{T}_h) \cap L_0^2(\Omega) =  Q_h^0$. Thus, we have the following discrete kernel corresponding to $b^h(\cdot,\cdot):$
$$\boldsymbol{X}^{cr}_h := \left\{\boldsymbol{v}_h \in \boldsymbol{V}^{cr_0}_h : b^h(\boldsymbol{v}_h, q_h) = 0 \;\; \forall \;\; q_h \in Q^0_h\right\} = \left\{\boldsymbol{v}_h \in \boldsymbol{V}^{cr_0}_h : \bdiv_h \boldsymbol{v}_h = 0\right\}.$$
 Consequently the  pair $(\boldsymbol{V}_h^{cr_0},Q_h^0)$ are structure preserving in the sense that they preserve the incompressibility constraint locally (elementwise).
\subsection{Discrete boundedness properties}\label{Sec:DSP}
The space $\boldsymbol{V}_h^{cr_0}$ is a subset of the piecewise Sobolev space
$$\boldsymbol{H}^1(\mathcal{T}_h) := \left\{\boldsymbol{v} \in \boldsymbol{L}^2(\Omega) : \boldsymbol{v}|_{K} \in \boldsymbol{H}^1(K) \;\; \forall \;\; K \in \mathcal{T}_h\right\}.$$
Similarly, $\mathcal{M}_h^{cr}$ is also a subset of $\left[H^1(\mathcal{T}_h)\right]^2$ which is defined analogously for all $\boldsymbol{s}_h \in \left[L^2(\Omega)\right]^2$. Now we define the following mesh and parameter dependent norms over $\boldsymbol{V}_h^{cr_0}$ and $\mathcal{M}^{cr}_h$ as follows:
\begin{align*}
	\norm{\boldsymbol{v}_h}^2_{1,\mathcal{T}_h^{nc}} := \sum_{K \in {\mathcal{T}_h}} \sigma \norm{\boldsymbol{v}_h}^2_{0,K} + &\nu_2 \norm{\nabla \boldsymbol{v}_h}^2_{0,K}, \; \forall \; \boldsymbol{v}_h \in \boldsymbol{H}^1({\mathcal{T}_h}), \; 
	\norm{\boldsymbol{s}_h}^2_{1,\mathcal{T}_h^{nc}} := \sum_{K \in {\mathcal{T}_h}} \bar{\sigma} \norm{\nabla \boldsymbol{s}_h}^2_{0,K}, \; \forall \; \boldsymbol{s}_h \in \left[H^1(\mathcal{T}_h)\right]^2,
\end{align*}
where $\sigma = \norm{\boldsymbol{K}^{-1}}_{\infty,\Omega}$, $\bar{\sigma} = \norm{\boldsymbol{D}}_{\infty, \Omega}$ and $\nu_2$ is as defined in section \ref{Intro}.  In the following lemma, we state and prove the continuity properties of the discrete bilinear and trilinear forms.
\begin{lemma}\label{ncStateCty}
	There holds
	\begin{align*}
		|a^h(\boldsymbol{y}_h; \boldsymbol{u}_h, \boldsymbol{v}_h)| &\leq C_a^{nc} \norm{\boldsymbol{u}_h}_{1,\mathcal{T}_h^{nc}} \norm{\boldsymbol{v}_h}_{1,\mathcal{T}_h^{nc}} \;\; \forall \;\; \boldsymbol{u}_h, \boldsymbol{v}_h \in \boldsymbol{V}_h^{cr_0}, \\ 
		|a^h_{\boldsymbol{y}}(\boldsymbol{y}_h, \boldsymbol{s}_h)| &\leq \hat{C}_a^{nc} \norm{\boldsymbol{y}_h}_{1,\mathcal{T}_h^{nc}} \norm{\boldsymbol{s}_h}_{1,\mathcal{T}_h^{nc}} \;\; \forall \;\; \boldsymbol{y}_h \in \mathcal{M}_h^{cr}, \boldsymbol{s}_h \in \mathcal{M}^{cr_0}_h, \\
		|c^h(\boldsymbol{w}_h, \boldsymbol{u}_h, \boldsymbol{v}_h)| &\leq C_{e}^{nc} \norm{\boldsymbol{w}_h}_{1,\mathcal{T}_h^{nc}} \norm{\boldsymbol{u}_{h}}_{1,\mathcal{T}_h^{nc}} \norm{\boldsymbol{v}_h}_{1,\mathcal{T}_h^{nc}} \;\; \forall \;\; \boldsymbol{w}_h, \boldsymbol{u}_h, \boldsymbol{v}_h \in \boldsymbol{V}_h^{cr_0} \\
		|c^h_{\boldsymbol{y}}(\boldsymbol{w}_h, \boldsymbol{y}_h, \boldsymbol{s}_h)| &\leq C_{e}^{nc} \norm{\boldsymbol{w}_h}_{1,\mathcal{T}_h^{nc}} \norm{\boldsymbol{y}_{h}}_{[L^4(\Omega)]^2} \norm{\boldsymbol{s}_h}_{1,\mathcal{T}_h^{nc}} \;\; \forall \;\; \boldsymbol{w_h} \in \boldsymbol{V}_h^{cr_0}, \boldsymbol{y}_h \in \mathcal{M}_h^{cr}, \boldsymbol{s}_h \in \mathcal{M}_h^{cr_0}, \\
		|b^h(\boldsymbol{v}_h, q_h)| &\leq C_d \norm{\boldsymbol{v}_h}_{1,\mathcal{T}_h^{nc}} \norm{q_h}_{0,\Omega} \;\; \forall \;\; \boldsymbol{v}_h \in \boldsymbol{H}^1(\mathcal{T}_h), q_h \in L_0^2(\Omega).
	\end{align*}
\end{lemma}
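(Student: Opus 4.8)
The five bounds split naturally into two families: the \emph{linear} forms $a^h$, $a^h_{\boldsymbol{y}}$, $b^h$, for which boundedness is an immediate consequence of the uniform bounds on the coefficients together with an element-wise Cauchy--Schwarz inequality matched against the definition of the parameter-weighted norms; and the convective trilinear forms $c^h$, $c^h_{\boldsymbol{y}}$, whose upwind face contributions are the only delicate point.

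\emph{Step 1 ($a^h$, $a^h_{\boldsymbol{y}}$, $b^h$).} Using $\boldsymbol{v}^{\top}\boldsymbol{K}^{-1}\boldsymbol{v}\le\sigma|\boldsymbol{v}|^2$ with $\sigma=\norm{\boldsymbol{K}^{-1}}_{\infty,\Omega}$ and $\nu_1\le\nu(\cdot)\le\nu_2$, I would bound the integrand of $a^h$ on each $K\in\mathcal{T}_h$ by $\sigma\norm{\boldsymbol{u}_h}_{0,K}\norm{\boldsymbol{v}_h}_{0,K}+\nu_2\norm{\nabla\boldsymbol{u}_h}_{0,K}\norm{\nabla\boldsymbol{v}_h}_{0,K}$ and then apply the discrete Cauchy--Schwarz inequality over $\mathcal{T}_h$; comparing with $\norm{\cdot}^2_{1,\mathcal{T}_h^{nc}}=\sum_K\sigma\norm{\cdot}^2_{0,K}+\nu_2\norm{\nabla\cdot}^2_{0,K}$ yields the first estimate, and in fact one may take $C_a^{nc}=1$. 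The bound for $a^h_{\boldsymbol{y}}$ is identical, with $\bar\sigma=\norm{\boldsymbol{D}}_{\infty,\Omega}$ replacing $\sigma$ and $\nu_2$. For $b^h(\boldsymbol{v}_h,q_h)=-(q_h,\bdiv\boldsymbol{v}_h)$, with the divergence taken elementwise, I would use $|\bdiv\boldsymbol{v}|\le\sqrt{d}\,|\nabla\boldsymbol{v}|$, a Cauchy--Schwarz inequality, and the comparison $\norm{\nabla_h\boldsymbol{v}_h}_{0,\Omega}\le\nu_2^{-1/2}\norm{\boldsymbol{v}_h}_{1,\mathcal{T}_h^{nc}}$, giving $C_d=\sqrt{d/\nu_2}$, valid for every $\boldsymbol{v}_h\in\boldsymbol{H}^1(\mathcal{T}_h)$.

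\emph{Step 2 ($c^h$, $c^h_{\boldsymbol{y}}$).} I would write $c^h(\boldsymbol{w}_h,\boldsymbol{u}_h,\boldsymbol{v}_h)=T_{\mathrm{vol}}+T_{\mathrm{up}}$. For the volume term $T_{\mathrm{vol}}=\int_\Omega(\boldsymbol{w}_h\cdot\nabla_h)\boldsymbol{u}_h\cdot\boldsymbol{v}_h\,dx$, a Hölder inequality with exponents $(4,2,4)$ gives $|T_{\mathrm{vol}}|\le\norm{\boldsymbol{w}_h}_{\boldsymbol{L}^4(\Omega)}\norm{\nabla_h\boldsymbol{u}_h}_{0,\Omega}\norm{\boldsymbol{v}_h}_{\boldsymbol{L}^4(\Omega)}$, and the discrete Sobolev embedding $\norm{\boldsymbol{v}_h}_{\boldsymbol{L}^4(\Omega)}\le C\norm{\nabla_h\boldsymbol{v}_h}_{0,\Omega}$ on $\boldsymbol{V}_h^{cr_0}$ (cf. \cite{SB_CRreview,m2an_discretelifting}), valid for $d\le 3$, converts this into the claimed product of broken norms. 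For the face term I would use $\bigl|\boldsymbol{w}_h\cdot\boldsymbol{n}_K-|\boldsymbol{w}_h\cdot\boldsymbol{n}_K|\bigr|\le 2|\boldsymbol{w}_h|$ to reduce each contribution to $\int_{\partial K\setminus\Gamma}|\boldsymbol{w}_h|\,|\boldsymbol{u}_h^e-\boldsymbol{u}_h|\,|\boldsymbol{v}_h|\,ds$, then a Hölder inequality on the faces, discrete trace and inverse inequalities (costing a factor $h_K^{-1/2}$ per face), and the crucial observation that $\boldsymbol{u}_h^e-\boldsymbol{u}_h$ has vanishing mean on each interior face owing to the Crouzeix--Raviart midpoint continuity; a Poincaré inequality on the face then restores the compensating factor $h_e^{1/2}$. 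Summing over elements by a discrete Hölder inequality and absorbing the remaining mesh-size powers through shape regularity ($\bar\rho$) gives $C_e^{nc}$ independent of $h$. The argument for $c^h_{\boldsymbol{y}}$ is the same, the one difference being that $\boldsymbol{y}_h\in\mathcal{M}_h^{cr}$ carries nonhomogeneous boundary data, so no discrete Poincaré bound is available for it; accordingly one keeps the middle factor as $\norm{\boldsymbol{y}_h}_{[L^4(\Omega)]^2}$, exactly as stated.

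\emph{Main obstacle.} Everything except the upwind face terms $T_{\mathrm{up}}$ is bookkeeping. The delicate step is the uniform-in-$h$ control of $T_{\mathrm{up}}$: the naive trace estimate loses a half power of $h$, and recovering it requires simultaneously exploiting the zero-mean property of the Crouzeix--Raviart jumps and the dissipative structure of the upwind numerical flux. This is by now standard for upwind nonconforming/DG discretizations of first-order transport (see \cite{DS_upwind} and the references therein), and I would carry it out by adapting those estimates to the present coefficient-weighted norms, keeping the constants dependent only on $\bar\rho$, $\nu_2$ and $d$.
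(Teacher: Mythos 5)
Most of your argument coincides with the paper's: elementwise Cauchy--Schwarz against the weighted norms for $a^h$, $a^h_{\boldsymbol{y}}$, $b^h$, and H\"older with exponents $(4,2,4)$ plus the broken Sobolev embedding \eqref{DiscSobEmbnc} for the volume part of the trilinear forms. For the upwind face terms you are in fact more explicit than the paper, which only cites the inverse inequality \eqref{inverseEst} and says ``following similar steps''; your accounting (an $h^{-1/4}$ trace cost for each $L^4$ factor, compensated by the $h_e^{1/2}$ gained from the zero-mean Crouzeix--Raviart jump via a face Poincar\'e and an inverse estimate) is the right mechanism for obtaining an $h$-uniform $C_e^{nc}$, and is a legitimate filling-in of the step the paper leaves implicit.

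There is, however, a genuine gap in your treatment of the fourth estimate. In $c^h_{\boldsymbol{y}}(\boldsymbol{w}_h,\boldsymbol{y}_h,\boldsymbol{s}_h)$ the middle argument $\boldsymbol{y}_h$ is the one that is differentiated in the volume term and jumped in the upwind term, so running ``the same argument'' as for $c^h$ produces a bound with $\norm{\nabla_h\boldsymbol{y}_h}_{0,\Omega}$ (equivalently $\norm{\boldsymbol{y}_h}_{1,\mathcal{T}_h^{nc}}$) in the middle slot, not $\norm{\boldsymbol{y}_h}_{[L^4(\Omega)]^2}$; simply declaring that one ``keeps the middle factor as $\norm{\boldsymbol{y}_h}_{[L^4(\Omega)]^2}$'' because no discrete Poincar\'e inequality is available for $\mathcal{M}_h^{cr}$ does not follow from anything you have set up. To obtain the stated form one must first perform an elementwise integration by parts, transferring the derivative and the upwind jumps from $\boldsymbol{y}_h$ onto $\boldsymbol{s}_h\in\mathcal{M}_h^{cr_0}$ (and controlling the resulting $\bdiv_h\boldsymbol{w}_h$ and face contributions), i.e.
\begin{align*}
  c^h_{\boldsymbol{y}}(\boldsymbol{w}_h,\boldsymbol{y}_h,\boldsymbol{s}_h)
  = -\sum_{K\in\mathcal{T}_h}\int_K (\boldsymbol{w}_h\cdot\nabla)\boldsymbol{s}_h\cdot\boldsymbol{y}_h\,dx
  + \sum_{K\in\mathcal{T}_h}\int_{\partial K\setminus\Gamma}\hat{\boldsymbol{w}}^{up}_h(\boldsymbol{s}_h)\cdot\boldsymbol{y}_h\,ds
  \;(+\;\text{divergence/face corrections}),
\end{align*}
after which the H\"older--trace argument applied with the derivative and jump sitting on $\boldsymbol{s}_h$ yields precisely $C_e^{nc}\norm{\boldsymbol{w}_h}_{1,\mathcal{T}_h^{nc}}\norm{\boldsymbol{y}_h}_{[L^4(\Omega)]^2}\norm{\boldsymbol{s}_h}_{1,\mathcal{T}_h^{nc}}$. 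This is exactly the step the paper takes, and it is not cosmetic: the $L^4$ form of the middle factor is what is used later (e.g.\ in Lemma \ref{ncEnergyEstState} and the smallness condition \eqref{lifting_dataSmallnessCondition}) for the lifted boundary datum $\boldsymbol{y}_{h,1}$, for which only $L^4$ control, not a homogeneous broken $H^1$ bound, is appropriate.
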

\begin{proof}
	We begin by establishing the continuity properties of the discrete bilinear form $a^h$. The element-wise contributions can be bounded for all $\boldsymbol{v}_h \in \boldsymbol{V}_h^{cr_0}$ by the mesh-dependent norms as follows: 
	\begin{align*}
		a^h(\boldsymbol{y}_h; \boldsymbol{u}_h, \boldsymbol{v}_h) &\leq \sum_{K \in {\mathcal{T}_h}} \sigma \norm{\boldsymbol{u}_h}_{0,K} \norm{\boldsymbol{v}_h}_{0,K} + \sum_{K \in {\mathcal{T}_h}} \nu_2 \norm{\nabla_h \boldsymbol{u}_h}_{0,K} \norm{\nabla_h \boldsymbol{v}_h}_{0,K} \leq \hat{C}_a^{nc} \norm{\boldsymbol{u}_h}_{1,\mathcal{T}_h^{nc}} \norm{\boldsymbol{v}_h}_{1,\mathcal{T}_h^{nc}}.
	\end{align*}
	Similarly, we can derive the desired bound for $a^h_{\boldsymbol{y}}$. To bound the discrete trilinear forms $c^h$, we utilize the discrete Sobolev embedding for the nonconforming setting (cf. \cite{Buffa_IMA, Karakashian_SINUM}): For $r \in [1, \infty)$ if $d = 2$ and $r \in [1,6]$ if $d = 3$ there exists a constant $C_{e}^{nc} > 0$ such that
	\begin{align}\label{DiscSobEmbnc}
		\norm{\boldsymbol{v}}_{\boldsymbol{L}^r(\Omega)} \leq C_{e}^{nc} \norm{\boldsymbol{v}}_{1,\mathcal{T}_h^{nc}} \;\; \forall \;\; \boldsymbol{v} \in \boldsymbol{H}^1(\mathcal{T}_h).
	\end{align}
	Indeed, the elementwise contribution of the convective term can be bounded using the H\"older's inequality and \eqref{DiscSobEmbnc} as follows:
	\begin{align*}
		\Big|\sum_{K \in {\mathcal{T}_h}} \int_K (\boldsymbol{w}_h \cdot \nabla \boldsymbol{u}_h) \cdot \boldsymbol{v}_h \; dx\Big| &\leq \sum_{K \in {\mathcal{T}_h}}  \norm{\boldsymbol{w}_h}_{\boldsymbol{L}^4(K)} \norm{\nabla \boldsymbol{u}_h}_{\boldsymbol{L}^2(K)} \norm{\boldsymbol{v}_h}_{\boldsymbol{L}^4(K)} \\
		&\leq \Big\{\sum_{K \in {\mathcal{T}_h}} \norm{\boldsymbol{w}_h}_{\boldsymbol{L}^4(K)}^4\Big\}^{1/4} \Big\{\sum_{K \in {\mathcal{T}_h}} \norm{\boldsymbol{v}_h}_{\boldsymbol{L}^4(K)}^4\Big\}^{1/4} \Big\{\sum_{K \in {\mathcal{T}_h}} \norm{\nabla \boldsymbol{u}_h}_{\boldsymbol{L}^2(K)}^2\Big\}^{1/2}  \\
		&\leq \norm{\boldsymbol{w}_h}_{\boldsymbol{L}^4(\Omega)} \norm{\boldsymbol{v}_h}_{\boldsymbol{L}^4(\Omega)} \norm{\boldsymbol{u}_h}_{1,\mathcal{T}_h^{nc}}  \leq C_e^{nc} \norm{\boldsymbol{w}_h}_{1,\mathcal{T}_h^{nc}} \norm{\boldsymbol{v}_h}_{1,\mathcal{T}_h^{nc}} \norm{\boldsymbol{u}_h}_{1,\mathcal{T}_h^{nc}}.
	\end{align*}
	Using the inverse inequality,
	\begin{align}\label{inverseEst}
		\norm{\boldsymbol{v}}_{0,\partial K} \leq C_{T} h^{-\frac{1}{2}}_K \norm{\boldsymbol{v}}_{0,K} \;\; \forall \;\; \boldsymbol{v} \in \mathbb{P}_k(K),
	\end{align}
	with $C_T$ being independent of the mesh parameter $h$, and following similar steps as above, we can show the following bound on the discrete upwind term:
	\begin{align*}
		\Big|\sum_{K \in \mathcal{T}_h} \int_{\partial K \backslash \Gamma} \hat{\boldsymbol{w}}^{up}_{h} (\boldsymbol{u}_h) \cdot \boldsymbol{v}_h ~ds \Big| \leq C_e^{nc} \norm{\boldsymbol{w}_h}_{1,\mathcal{T}_h^{nc}} \norm{\boldsymbol{u}_h}_{1,\mathcal{T}_h^{nc}} \norm{\boldsymbol{v}_h}_{1,\mathcal{T}_h^{nc}}.
	\end{align*}
	Combining the above two  bounds leads to desired bound for $c^h$. To derive the bound for $c^h_{\boldsymbol{y}}$, we integrate it by parts 
	$$c^h_{\boldsymbol{y}}(\boldsymbol{w}_h, \boldsymbol{y}_h, \boldsymbol{s}_h) :=  -\sum_{K \in \mathcal{T}_h} \int_{K} \left(\boldsymbol{w}_h \cdot \nabla_h \right) \boldsymbol{s}_h \cdot \boldsymbol{y}_h ~dx + \sum_{K \in {\mathcal{T}_h}} \int_{\partial K \backslash \Gamma} \hat{\boldsymbol{w}}^{up}_{h} (\boldsymbol{s}_h) \cdot \boldsymbol{y}_h ~ds,$$
	and apply the same procedure used to get the bounds for $c_h$.
\end{proof}

In addition, for all $\boldsymbol{w}_h \in \boldsymbol{X}_h^{cr}$, an integration by parts of the convective term and the fact that the discrete velocities obtained are exactly divergence free, yield the following positivity properties,
\begin{align}
	c^h(\boldsymbol{w}_h, \boldsymbol{u}_h, \boldsymbol{u}_h) = \frac{1}{2} \sum_{e \in \mathcal{E}(\mathcal{T}_h)} \int_{e} |\boldsymbol{w}_h \cdot \boldsymbol{n}_e| \; |\jump{\boldsymbol{u}_h}|^2 \; ds \geq 0\;\; \forall \;\; \boldsymbol{u}_h \in \boldsymbol{V}_h^{cr_0}, \label{Positivitync:ch}\\
	c^h_{\boldsymbol{y}}(\boldsymbol{w}_h, \boldsymbol{s}_h, \boldsymbol{s}_h) = \frac{1}{2} \sum_{e \in \mathcal{E}(\mathcal{T}_h)} \int_{e} |\boldsymbol{w}_h \cdot \boldsymbol{n}_e| \; |\jump{\boldsymbol{s}_h}|^2 \; ds \geq 0 \;\; \forall \;\; \boldsymbol{s}_h \in \mathcal{M}_h^{cr_0}. \label{Positivitync:chy}
\end{align}
Moreover, the following coercivity properties of $a^h$ and $a^h_{\boldsymbol{y}}$ also hold:
\begin{align}\label{nc:Coercivity}
	a^h(\cdot; \boldsymbol{u}_h, \boldsymbol{u}_h) \geq \alpha_a^{nc} \norm{\boldsymbol{v}_h}^2_{1,\mathcal{T}_h^{nc}} \;\; \forall \;\; \boldsymbol{v}_h \in \boldsymbol{V}_h^{cr_0} \;\; \mbox{and} \;\;
	a^h_{\boldsymbol{y}}(\boldsymbol{s}_h, \boldsymbol{s}_h) \geq \alpha_2^{nc} \norm{\boldsymbol{s}_h}^2_{1,\mathcal{T}_h^{nc}} \;\; \forall \;\; \boldsymbol{s}_h \in \mathcal{M}_h^{cr_0},
\end{align}
and $b^h(\cdot,\cdot)$ satisfies the following discrete inf-sup condition for all $q_h \in Q_h^0:$
\begin{align}\label{discreteinfsup}
	\sup_{\boldsymbol{v}_h \in \boldsymbol{V}_h^{cr_0} \backslash \{\boldsymbol{0}\}} \frac{b^h(\boldsymbol{v}_h, q_h)}{\norm{\boldsymbol{v}_h}_{1,\mathcal{T}_h^{nc}}} \geq \tilde{\beta} \norm{q_h}_{0,\Omega}, \;\; \mbox{for some} \; \tilde{\beta} > 0 \; \mbox{which is independent of}\; h.
\end{align}
\section{Discrete well-posedness}
\subsection{Existence of discrete solutions}\label{Sec:ncExistenceState}
The following discrete analogue of Lemma \ref{State:equivalence} holds:
\begin{lemma}\label{Discrete:State:equivalence}
	If $(\boldsymbol{u}_h,p_h,\boldsymbol{y}_h) \in \boldsymbol{V}_h^{cr_0} \times Q_h^0 \times \mathcal{M}_h^{cr}$ solves \eqref{ncPh:S}, $(\boldsymbol{u}_h,\boldsymbol{y}_h) \in \boldsymbol{X}_h^{cr} \times \mathcal{M}_h^{cr}$ satisfies $\boldsymbol{y}_h|_{\Gamma} = \boldsymbol{y}_h^D$ and
	\begin{align}\label{ncPh:Sred}
		\left\{
		\begin{aligned}
			a^h(\boldsymbol{y}_h;\boldsymbol{u}_h,\boldsymbol{v}_h) + c^h(\boldsymbol{u}_h, \boldsymbol{u}_h, \boldsymbol{v}_h) - d^h(\boldsymbol{y}_h, \boldsymbol{v}_h) &= 0 \;\; \forall \;\; \boldsymbol{v}_h \in \boldsymbol{X}_h^{cr},\\
			a^h_{\boldsymbol{y}}(\boldsymbol{y}_h,\boldsymbol{s}_h) + c^h_{\boldsymbol{y}}(\boldsymbol{u}_h, \boldsymbol{y}_h, \boldsymbol{s}_h) &= 0 \;\; \forall \;\; \boldsymbol{s}_h \in \mathcal{M}_h^{cr_0}.
		\end{aligned}
		\right.
	\end{align}
	Conversely, if $(\boldsymbol{u}_h,\boldsymbol{y}_h) \in \boldsymbol{X}_h^{cr} \times \mathcal{M}_h^{cr}$ is a solution of the reduced problem \eqref{ncPh:Sred}, then there exists $p_h \in Q_h^0$ such that $(\boldsymbol{u}_h, p_h ,\boldsymbol{y}_h)$ is a solution of \eqref{ncPh:S}.
\end{lemma}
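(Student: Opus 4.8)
The plan is to follow the same route as in the continuous case (Lemma~\ref{State:equivalence}), with the continuous inf-sup/De~Rham step replaced by the discrete inf-sup condition \eqref{discreteinfsup}. The structural fact that makes the argument go through is the one already recorded before the statement: since $\bdiv \boldsymbol{V}^{cr_0}_h \subset Q_h^0$, the discrete kernel satisfies $\boldsymbol{X}_h^{cr} = \{\boldsymbol{v}_h \in \boldsymbol{V}^{cr_0}_h : \bdiv \boldsymbol{v}_h = 0\}$, and consequently $b^h(\boldsymbol{v}_h, q_h) = -(q_h, \bdiv \boldsymbol{v}_h) = 0$ for every $\boldsymbol{v}_h \in \boldsymbol{X}_h^{cr}$ and every $q_h \in L_0^2(\Omega)$. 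I would also remark at the outset that the broken forms $b^h(\cdot,\cdot)$ and $d^h(\cdot,\cdot)$ are well defined on $\boldsymbol{H}^1(\mathcal{T}_h)$, so that all restrictions to subspaces below are legitimate.

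\textbf{Forward implication.} Let $(\boldsymbol{u}_h,p_h,\boldsymbol{y}_h)$ solve \eqref{ncPh:S}. The second equation of \eqref{ncPh:S} says exactly that $\boldsymbol{u}_h \in \boldsymbol{X}_h^{cr}$, so in particular $\boldsymbol{u}_h$ is exactly divergence-free. Restricting the first equation of \eqref{ncPh:S} to test functions $\boldsymbol{v}_h \in \boldsymbol{X}_h^{cr} \subset \boldsymbol{V}^{cr_0}_h$ and using $b^h(\boldsymbol{v}_h,p_h)=0$ on $\boldsymbol{X}_h^{cr}$ eliminates the pressure and produces the first line of \eqref{ncPh:Sred}; the third equation of \eqref{ncPh:S} is verbatim the second line of \eqref{ncPh:Sred}; and the boundary condition $\boldsymbol{y}_h|_\Gamma = \boldsymbol{y}_h^D$ is inherited. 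This direction is thus immediate.

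\textbf{Converse implication.} Given $(\boldsymbol{u}_h,\boldsymbol{y}_h) \in \boldsymbol{X}_h^{cr} \times \mathcal{M}_h^{cr}$ solving \eqref{ncPh:Sred}, I would introduce the linear functional on $\boldsymbol{V}^{cr_0}_h$
\[
\ell_h(\boldsymbol{v}_h) := d^h(\boldsymbol{y}_h,\boldsymbol{v}_h) + (\boldsymbol{U},\boldsymbol{v}_h)_{r,r'} - a^h(\boldsymbol{y}_h;\boldsymbol{u}_h,\boldsymbol{v}_h) - c^h(\boldsymbol{u}_h,\boldsymbol{u}_h,\boldsymbol{v}_h),
\]
which is bounded by the continuity estimates of Lemma~\ref{ncStateCty} and which vanishes on $\boldsymbol{X}_h^{cr}$ by the first line of \eqref{ncPh:Sred}. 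In the finite-dimensional setting the discrete inf-sup condition \eqref{discreteinfsup} is equivalent (by the closed-range theorem) to the statement that $q_h \mapsto b^h(\cdot,q_h)$ is an isomorphism from $Q_h^0$ onto the annihilator of $\boldsymbol{X}_h^{cr}$ in $(\boldsymbol{V}^{cr_0}_h)'$. Since $\ell_h$ lies in that annihilator, there is a unique $p_h \in Q_h^0$ with $b^h(\boldsymbol{v}_h,p_h) = \ell_h(\boldsymbol{v}_h)$ for all $\boldsymbol{v}_h \in \boldsymbol{V}^{cr_0}_h$, i.e. the first equation of \eqref{ncPh:S} holds. The second equation of \eqref{ncPh:S} holds because $\boldsymbol{u}_h \in \boldsymbol{X}_h^{cr}$, the third is the second line of \eqref{ncPh:Sred}, and $\boldsymbol{y}_h|_\Gamma = \boldsymbol{y}_h^D$ carries over; hence $(\boldsymbol{u}_h,p_h,\boldsymbol{y}_h)$ solves \eqref{ncPh:S}.

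The proof is essentially bookkeeping, and I do not expect a genuine obstacle. The only point that requires care is the standard functional-analytic equivalence between the discrete inf-sup condition and the surjectivity of $b^h(\cdot,\cdot)$ onto the annihilator of its kernel, together with the identification $\boldsymbol{X}_h^{cr} = \{\boldsymbol{v}_h \in \boldsymbol{V}^{cr_0}_h : \bdiv \boldsymbol{v}_h = 0\}$, which itself rests on the inclusion $\bdiv \boldsymbol{V}^{cr_0}_h \subset Q_h^0$ satisfied by the chosen Crouzeix–Raviart/piecewise-constant pair.
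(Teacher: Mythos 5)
Your proposal is correct and follows exactly the route the paper intends: the paper states this lemma without proof as the discrete analogue of the continuous equivalence result, and the expected argument is precisely yours — restrict to the discrete kernel $\boldsymbol{X}_h^{cr}$ for the forward direction, and recover $p_h$ in the converse from the discrete inf-sup condition \eqref{discreteinfsup}, which in finite dimensions yields surjectivity of $q_h \mapsto b^h(\cdot,q_h)$ onto the annihilator of $\boldsymbol{X}_h^{cr}$. No gaps; the only care points (that $\bdiv \boldsymbol{V}_h^{cr_0} \subset Q_h^0$ so the kernel consists of exactly divergence-free functions, and the boundedness of the residual functional via Lemma \ref{ncStateCty}) are handled correctly.
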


A discrete boundary lifting of $\boldsymbol{y}_h$ is performed by setting 
\begin{align}\label{discreteLifitingdefn}
	\boldsymbol{y}_h = \boldsymbol{y}_{h,0} + \boldsymbol{y}_{h,1} \; \mbox{with} \; \boldsymbol{y}_{h,0} \in \mathcal{M}_{h}^{cr_0} \; \mbox{such that} \; \boldsymbol{y}_{h,1} \in \mathcal{M}_h^{cr} \; \mbox{and} \; \boldsymbol{y}_{h,1}|_{\Gamma} = \boldsymbol{y}_h^D.
\end{align}
Since trace of a nonconforming function to the boundary does not necessarily belong to $[{H}^{1/2}(\Gamma)]^2$, a discrete lifting theorem was proved in \cite{m2an_discretelifting} using a discrete $[{H}^{1/2}(\Gamma)]^2$-norm combined with an enriching operator (see \cite[Appendix B]{Brenner_EnrichmentOperator}) to convert the nonconforming approximation to a conforming one. The combined results stating compatibility of discrete $[{H}^{1/2}(\Gamma)]^2$-norm with the usual $[{H}^{1/2}(\Gamma)]^2$-norm and its relation with the $\norm{\cdot}_{1,\mathcal{T}_h^{nc}}$ norm is stated  in the following lemma (see \cite[Lemmas 3.1 and 3.4]{m2an_discretelifting}):

\begin{lemma}\label{discreteLifiting}
	Let $\boldsymbol{g} \in \big[H^{\frac{1}{2}}(\Gamma)\big]^2$ then for $\boldsymbol{\Pi}_{nc}^{\partial} \boldsymbol{g} \in \boldsymbol{E}_h$, there exists $\boldsymbol{s}_h \in \mathcal{M}_h^{cr}$ satisfying $\boldsymbol{s}_h(m_e) = \boldsymbol{\Pi}_{nc}^{\partial} \boldsymbol{g}(m_e)$	and
	$$\|\boldsymbol{s}_h\|_{1,\mathcal{T}_h^{nc}} \leq C \|\boldsymbol{\Pi}_{nc}^{\partial} \boldsymbol{g}\|_{1/2, \boldsymbol{E}_h} \leq C \norm{\boldsymbol{g}}_{1/2, \Gamma},$$
	where $C$ denotes a generic positive constant  independent of the mesh parameter $h$.
\end{lemma}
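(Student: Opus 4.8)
The plan is to recognise the stated bound as the composition of two separate facts and prove them in turn, exactly as in \cite[Lemmas 3.1 and 3.4]{m2an_discretelifting}: (a) a \emph{compatibility estimate} $\|\boldsymbol{\Pi}_{nc}^{\partial}\boldsymbol{g}\|_{1/2,\boldsymbol{E}_h}\le C\|\boldsymbol{g}\|_{1/2,\Gamma}$, and (b) the existence of a \emph{nonconforming extension} $\boldsymbol{s}_h\in\mathcal{M}_h^{cr}$ with $\boldsymbol{s}_h(m_e)=\boldsymbol{\Pi}_{nc}^{\partial}\boldsymbol{g}(m_e)$ for all $e\in\mathcal{E}_B(\mathcal{T}_h)$ and $\|\boldsymbol{s}_h\|_{1,\mathcal{T}_h^{nc}}\le C\|\boldsymbol{\Pi}_{nc}^{\partial}\boldsymbol{g}\|_{1/2,\boldsymbol{E}_h}$; chaining (b) then (a) gives the Lemma. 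The reason a detour through the mesh-dependent norm $\|\cdot\|_{1/2,\boldsymbol{E}_h}$ on the piecewise-constant boundary space $\boldsymbol{E}_h$ is forced on us is the nonconformity: the trace to $\Gamma$ of an element of $\mathcal{M}_h^{cr}$ is piecewise linear but discontinuous, hence in general not in $[H^{1/2}(\Gamma)]^2$, so the classical continuous trace–lifting theorem cannot be applied directly. The norm $\|\cdot\|_{1/2,\boldsymbol{E}_h}$, which augments $\|\cdot\|_{0,\Gamma}$ by a mesh-scaled measure of the facet-to-facet jumps of a piecewise-constant boundary function, together with the enriching operator of \cite[Appendix B]{Brenner_EnrichmentOperator}, are precisely the tools that bridge the conforming and nonconforming settings.

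For (a): the $\|\cdot\|_{0,\Gamma}$ contribution is immediate, since $\boldsymbol{\Pi}_{nc}^{\partial}$ is the facetwise $L^2(\Gamma)$-projection onto piecewise constants and hence a contraction in $\|\cdot\|_{0,\Gamma}$. For the fractional part I would use $g_e:=|e|^{-1}\int_e\boldsymbol{g}$ and Cauchy--Schwarz to bound each squared facet jump of neighbouring boundary facets $e,e'$ by $\int_e\int_{e'}|\boldsymbol{g}(x)-\boldsymbol{g}(y)|^2\,|x-y|^{-d}\,dx\,dy$, and then sum over $\mathcal{E}_B(\mathcal{T}_h)$ with finite overlap (which uses shape regularity) to recover the global $H^{1/2}(\Gamma)$-seminorm; this is essentially \cite[Lemma 3.1]{m2an_discretelifting}.

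For (b), I would build $\boldsymbol{s}_h$ through a conforming intermediary. First, average $\boldsymbol{\Pi}_{nc}^{\partial}\boldsymbol{g}$ at the vertices of $\mathcal{E}_B(\mathcal{T}_h)$ to obtain a continuous piecewise-linear boundary datum $\hat{\boldsymbol{g}}_h$; by construction $\hat{\boldsymbol{g}}_h(m_e)-\boldsymbol{\Pi}_{nc}^{\partial}\boldsymbol{g}(m_e)$ is, on each boundary facet $e$, a fixed-coefficient combination of the jumps of $\boldsymbol{\Pi}_{nc}^{\partial}\boldsymbol{g}$ at the two endpoints of $e$, while $\|\hat{\boldsymbol{g}}_h\|_{1/2,\Gamma}\le C\|\boldsymbol{\Pi}_{nc}^{\partial}\boldsymbol{g}\|_{1/2,\boldsymbol{E}_h}$. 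The classical finite-element trace lifting then gives a conforming $\boldsymbol{W}_h$ on $\mathcal{T}_h$ with $\boldsymbol{W}_h|_{\Gamma}=\hat{\boldsymbol{g}}_h$ and $\|\boldsymbol{W}_h\|_{1,\Omega}\le C\|\hat{\boldsymbol{g}}_h\|_{1/2,\Gamma}$. Finally set $\boldsymbol{s}_h:=\boldsymbol{\Pi}_{nc}\boldsymbol{W}_h+\boldsymbol{c}_h$, where $\boldsymbol{c}_h\in\mathcal{M}_h^{cr}$ is the Crouzeix--Raviart function that vanishes at every interior midpoint and equals $\boldsymbol{\Pi}_{nc}^{\partial}\boldsymbol{g}(m_e)-\boldsymbol{W}_h(m_e)$ at every boundary midpoint; since $\boldsymbol{\Pi}_{nc}\boldsymbol{W}_h(m_e)=\boldsymbol{W}_h(m_e)$ for the piecewise-linear $\boldsymbol{W}_h$, one gets $\boldsymbol{s}_h(m_e)=\boldsymbol{\Pi}_{nc}^{\partial}\boldsymbol{g}(m_e)$ on $\mathcal{E}_B(\mathcal{T}_h)$ exactly. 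For the norm bound, $\|\boldsymbol{\Pi}_{nc}\boldsymbol{W}_h\|_{1,\mathcal{T}_h^{nc}}\le C\|\boldsymbol{W}_h\|_{1,\Omega}$ by the broken-$H^1$ stability of $\boldsymbol{\Pi}_{nc}$; and $\boldsymbol{c}_h$ is supported in the one-element-thick layer of elements touching $\Gamma$, so an element-by-element scaling argument bounds $\|\nabla\boldsymbol{c}_h\|_{0,K}$ by $C$ times the spread of the midpoint values of $\boldsymbol{c}_h$ on $K$, i.e.\ by $C$ times the relevant facet jumps of $\boldsymbol{\Pi}_{nc}^{\partial}\boldsymbol{g}$, and summing the squares over the boundary layer gives $\|\boldsymbol{c}_h\|_{1,\mathcal{T}_h^{nc}}\le C\|\boldsymbol{\Pi}_{nc}^{\partial}\boldsymbol{g}\|_{1/2,\boldsymbol{E}_h}$; this is \cite[Lemma 3.4]{m2an_discretelifting}.

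The technical heart is the last estimate in (b): the correction $\boldsymbol{c}_h$ must be controlled by the \emph{fractional} discrete norm and not by a stronger discrete $H^1(\Gamma)$-type quantity. What makes this work is that $\boldsymbol{c}_h$ lives in a single layer of elements rather than in a stack of depth $O(h^{-1})$, so its broken $H^1$-energy picks up no $h^{-1}$ blow-up and reduces to a mesh-independent sum of squared facet jumps --- precisely the lowest-order content that $\|\cdot\|_{1/2,\boldsymbol{E}_h}$ dominates. Beyond that, the only delicate bookkeeping is keeping all constants independent of $h$ (shape regularity enters at the finite-overlap and scaling steps) and checking that no regularity of $\boldsymbol{g}$ above $H^{1/2}(\Gamma)$ is ever used; the remaining ingredients --- the continuous finite-element lifting, the stability of $\boldsymbol{\Pi}_{nc}$, and the enriching-operator estimates of \cite{Brenner_EnrichmentOperator} --- are standard.
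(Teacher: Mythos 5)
Your argument is correct, and it reconstructs precisely the route the paper relies on: the paper proves this lemma only by invoking \cite[Lemmas 3.1 and 3.4]{m2an_discretelifting}, i.e.\ the compatibility bound $\|\boldsymbol{\Pi}_{nc}^{\partial}\boldsymbol{g}\|_{1/2,\boldsymbol{E}_h}\leq C\norm{\boldsymbol{g}}_{1/2,\Gamma}$ combined with the Crouzeix--Raviart extension controlled by the discrete $H^{1/2}$-norm via a conforming intermediary and the enriching-operator machinery of \cite{Brenner_EnrichmentOperator}. Since your two-step decomposition, the vertex-averaging/conforming-lifting construction, and the single-boundary-layer scaling estimate are exactly the content of those cited lemmas, your proposal is essentially the same approach as the paper's.
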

\begin{lemma}\label{ncEnergyEstState}
	Let $(\boldsymbol{u}_h, \boldsymbol{y}_h)$ be a solution of \eqref{ncPh:Sred} with the discrete boundary lifting as defined in \eqref{discreteLifitingdefn}. Then there exist positive constants $C_{\boldsymbol{u}}^{nc}, C_{\boldsymbol{y}}^{nc}$ independent of the mesh parameter $h$, such that
	$$\norm{\boldsymbol{u}_h}_{1,\mathcal{T}_h^{nc}} \leq C_{\boldsymbol{u}}^{nc} \norm{\boldsymbol{y}^D}_{1/2,\Gamma} := M_{\boldsymbol{u}}^{nc}, \;\;\;\; \norm{\boldsymbol{y}_{h,0}}_{1,\mathcal{T}_h^{nc}} \leq C_{\boldsymbol{y}}^{nc} \norm{\boldsymbol{y}^D}_{1/2,\Gamma}, \mbox{and}$$
	$$\norm{\boldsymbol{y}_h}_{1,\mathcal{T}_h^{nc}} \leq C_{\boldsymbol{y}}^{nc} \norm{\boldsymbol{y}^D}_{1/2,\Gamma} := M_{\boldsymbol{y}}^{nc}.$$
\end{lemma}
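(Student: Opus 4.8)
The plan is to split $\boldsymbol{y}_h=\boldsymbol{y}_{h,0}+\boldsymbol{y}_{h,1}$ as in \eqref{discreteLifitingdefn}, derive one energy estimate from the discrete velocity block of \eqref{ncPh:Sred} and one from the temperature--concentration block, and then close the coupling between them by absorption, mirroring the continuous argument behind Theorem \ref{StateExistence} (cf. \cite[Lemma 4.1]{oc_ddf_I}). First, Lemma \ref{discreteLifiting} applied with $\boldsymbol{g}=\boldsymbol{y}^D$ furnishes a discrete lifting $\boldsymbol{y}_{h,1}\in\mathcal{M}_h^{cr}$ with $\boldsymbol{y}_{h,1}(m_e)=\boldsymbol{\Pi}^{\partial}_{nc}\boldsymbol{y}^D(m_e)$ on $\mathcal{E}_B(\mathcal{T}_h)$ and $\norm{\boldsymbol{y}_{h,1}}_{1,\mathcal{T}_h^{nc}}\le C\norm{\boldsymbol{y}^D}_{1/2,\Gamma}$ with $C$ independent of $h$; together with a discrete Poincar\'e--Friedrichs inequality on $\mathcal{M}_h^{cr_0}$ and the discrete Sobolev embedding \eqref{DiscSobEmbnc}, this also controls the $L^2$- and $L^4$-norms of $\boldsymbol{y}_{h,0}$ and $\boldsymbol{y}_{h,1}$ by $\norm{\cdot}_{1,\mathcal{T}_h^{nc}}$ and $\norm{\boldsymbol{y}^D}_{1/2,\Gamma}$.

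Second, I would test the scalar equation of \eqref{ncPh:Sred} with $\boldsymbol{s}_h=\boldsymbol{y}_{h,0}\in\mathcal{M}_h^{cr_0}$ and insert $\boldsymbol{y}_h=\boldsymbol{y}_{h,0}+\boldsymbol{y}_{h,1}$. Coercivity \eqref{nc:Coercivity} bounds the principal part below by $\alpha_2^{nc}\norm{\boldsymbol{y}_{h,0}}^2_{1,\mathcal{T}_h^{nc}}$; the diagonal convective contribution $c^h_{\boldsymbol{y}}(\boldsymbol{u}_h,\boldsymbol{y}_{h,0},\boldsymbol{y}_{h,0})$ is nonnegative by \eqref{Positivitync:chy} (since $\boldsymbol{u}_h\in\boldsymbol{X}_h^{cr}$) and is discarded; and the lifting terms $a^h_{\boldsymbol{y}}(\boldsymbol{y}_{h,1},\boldsymbol{y}_{h,0})$ and $c^h_{\boldsymbol{y}}(\boldsymbol{u}_h,\boldsymbol{y}_{h,1},\boldsymbol{y}_{h,0})$ are controlled via Lemma \ref{ncStateCty} and \eqref{DiscSobEmbnc}, the latter by $C_e^{nc}\norm{\boldsymbol{u}_h}_{1,\mathcal{T}_h^{nc}}\norm{\boldsymbol{y}_{h,1}}_{1,\mathcal{T}_h^{nc}}\norm{\boldsymbol{y}_{h,0}}_{1,\mathcal{T}_h^{nc}}$. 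Young's inequality (absorbing half of the principal term on the left) then yields $\norm{\boldsymbol{y}_{h,0}}_{1,\mathcal{T}_h^{nc}}\le C\norm{\boldsymbol{y}^D}_{1/2,\Gamma}\big(1+\norm{\boldsymbol{u}_h}_{1,\mathcal{T}_h^{nc}}\big)$, and hence the same bound for $\norm{\boldsymbol{y}_h}_{1,\mathcal{T}_h^{nc}}$. Next, test the momentum equation of \eqref{ncPh:Sred} with $\boldsymbol{v}_h=\boldsymbol{u}_h\in\boldsymbol{X}_h^{cr}$: coercivity of $a^h$ \eqref{nc:Coercivity} gives the lower bound $\alpha_a^{nc}\norm{\boldsymbol{u}_h}^2_{1,\mathcal{T}_h^{nc}}$; $c^h(\boldsymbol{u}_h,\boldsymbol{u}_h,\boldsymbol{u}_h)\ge0$ by \eqref{Positivitync:ch} and drops out; the buoyancy term obeys $|d^h(\boldsymbol{y}_h,\boldsymbol{u}_h)|\le C_F\norm{\boldsymbol{y}_h}_{0,\Omega}\norm{\boldsymbol{u}_h}_{0,\Omega}\le C\norm{\boldsymbol{y}_h}_{1,\mathcal{T}_h^{nc}}\norm{\boldsymbol{u}_h}_{1,\mathcal{T}_h^{nc}}$; and the control term obeys $|(\boldsymbol{U},\boldsymbol{u}_h)_{r,r'}|\le\norm{\boldsymbol{U}}_{L^r(\Omega)}\norm{\boldsymbol{u}_h}_{L^{r'}(\Omega)}\le C_e^{nc}\norm{\boldsymbol{U}}_{L^r(\Omega)}\norm{\boldsymbol{u}_h}_{1,\mathcal{T}_h^{nc}}$ by \eqref{DiscSobEmbnc} (the conjugate exponent of any $r$ in \eqref{r_range} lies in the admissible range). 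Cancelling one power of $\norm{\boldsymbol{u}_h}_{1,\mathcal{T}_h^{nc}}$ gives $\norm{\boldsymbol{u}_h}_{1,\mathcal{T}_h^{nc}}\le C\big(\norm{\boldsymbol{y}_h}_{1,\mathcal{T}_h^{nc}}+\norm{\boldsymbol{U}}_{L^r(\Omega)}\big)$.

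Finally, I would substitute the $\boldsymbol{y}_h$-bound into the $\boldsymbol{u}_h$-bound and collect terms; this is the delicate step. The coupling leaves a term proportional to $\norm{\boldsymbol{y}^D}_{1/2,\Gamma}\norm{\boldsymbol{u}_h}_{1,\mathcal{T}_h^{nc}}$ on the right, so closing the estimate requires the discrete lifting $\boldsymbol{y}_{h,1}$ to be chosen so that the constant multiplying $c^h_{\boldsymbol{y}}(\boldsymbol{u}_h,\boldsymbol{y}_{h,1},\cdot)$ is small — the nonconforming analogue of the lifting argument underlying the continuous estimate in \cite[Theorem 3.5, Lemma 4.1]{oc_ddf_I}; with such a choice the $\norm{\boldsymbol{u}_h}_{1,\mathcal{T}_h^{nc}}$ contribution is absorbed on the left, giving $\norm{\boldsymbol{u}_h}_{1,\mathcal{T}_h^{nc}}\le C_{\boldsymbol{u}}^{nc}\big(\norm{\boldsymbol{y}^D}_{1/2,\Gamma}+\norm{\boldsymbol{U}}_{L^r(\Omega)}\big)$, and then, by back-substitution, the stated bounds for $\boldsymbol{y}_{h,0}$ and $\boldsymbol{y}_h$. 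The only potentially $h$-dependent ingredients — \eqref{DiscSobEmbnc}, the inverse estimate \eqref{inverseEst} used inside Lemma \ref{ncStateCty}, and Lemma \ref{discreteLifiting} — all carry $h$-uniform constants, so $C_{\boldsymbol{u}}^{nc},C_{\boldsymbol{y}}^{nc}$ are independent of $h$. The main obstacle is precisely this absorption: neutralising the coupling through the non-homogeneous boundary lifting, which is more subtle than in the conforming case because the discrete lifting cannot be localised arbitrarily close to $\Gamma$ on a fixed mesh, and is dealt with by transferring the construction of \cite{m2an_discretelifting} via an enriching operator and estimating as in \cite{oc_ddf_I}.
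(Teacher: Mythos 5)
Your proposal is correct and follows essentially the same route as the paper: test \eqref{ncPh:Sred} with $(\boldsymbol{u}_h,\boldsymbol{y}_{h,0})$, use coercivity \eqref{nc:Coercivity} and the positivity \eqref{Positivitync:ch}--\eqref{Positivitync:chy} of the upwinded convective forms, bound the lifting contributions via Lemma \ref{ncStateCty} and \eqref{DiscSobEmbnc}, close the coupling by absorption, and finish with Lemma \ref{discreteLifiting}. The only difference is at the absorption step you correctly flag as delicate: rather than constructing a discrete lifting with arbitrarily small coupling norm (which, as you note, cannot be localised near $\Gamma$ on a fixed mesh), the paper simply imposes the explicit data-smallness condition \eqref{lifting_dataSmallnessCondition} on $\norm{\boldsymbol{y}_{h,1}}_{[L^4(\Omega)]^2}$, i.e.\ the discrete analogue of \cite[Lemma 3.3]{oc_ddf_I}, so the closure is by assumption on the data rather than by a special choice of lifting.
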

\begin{proof}
	Choose $(\boldsymbol{v}_h, \boldsymbol{s}_h) = (\boldsymbol{u}_h, \boldsymbol{y}_{h,0})$ in \eqref{ncPh:Sred} and use \eqref{Positivitync:ch} and \eqref{Positivitync:chy} to rewrite it as follows:
	\begin{align*}
		a^h(\boldsymbol{y}_h;\boldsymbol{u}_h,\boldsymbol{u}_h) &=  d^h(\boldsymbol{y}_h, \boldsymbol{u}_h) \;\; \forall \;\; \boldsymbol{u}_h \in \boldsymbol{X}_h^{cr}, \\
		a^h_{\boldsymbol{y}}(\boldsymbol{y}_{h,0},\boldsymbol{y}_{h,0}) &=  - a^h_{\boldsymbol{y}}(\boldsymbol{y}_{h,1},\boldsymbol{y}_{h,0}) - c^h_{\boldsymbol{y}}(\boldsymbol{u}_h, \boldsymbol{y}_{h,1}, \boldsymbol{y}_{h,0}) \;\; \forall \;\; \boldsymbol{y}_{h,0} \in \mathcal{M}_h^{cr_0}.		
	\end{align*} 
	Due to the coercivity of the bilinear forms in \eqref{nc:Coercivity}, continuity properties stated in Lemma \ref{ncStateCty}, we have
	\begin{align}
		\alpha_a^{nc} \norm{\boldsymbol{u}_h}_{1,\mathcal{T}_h^{nc}} &\leq C_F C_e^{nc}  \sum_{K \in {\mathcal{T}_h}} \big(\norm{\boldsymbol{y}_{h,0}}_{0,K} + \norm{\boldsymbol{y}_{h,1}}_{0,K}\big)  \label{ncEnergyEstEq1}\\
		&\leq C_F C_e^{nc}  \big( \norm{ \boldsymbol{y}_{h,0}}_{1,\mathcal{T}_h^{nc}} + \norm{\boldsymbol{y}_{h,1}}_{1,\mathcal{T}_h^{nc}}\big), \nonumber\\
		\alpha_2^{nc} \norm{\boldsymbol{y}_{h,0}}_{1,\mathcal{T}_h^{nc}} &\leq C^{nc}_e   \norm{\boldsymbol{u}_h}_{1,\mathcal{T}_h^{nc}} \norm{\boldsymbol{y}_{h,1}}_{[L^4(\Omega)]^2} + \hat{C}_a^{nc} \norm{\boldsymbol{y}_{h,1}}_{1,\mathcal{T}_h^{nc}}. \label{ncEnergyEstEq2}
	\end{align}
	Put the bounds of \eqref{ncEnergyEstEq2} in \eqref{ncEnergyEstEq1} and apply Lemma \ref{IntermediateLemma} which still holds for the discrete case with
	\begin{align}
		\frac{C_F (C_e^{nc})^2}{\alpha_a^{nc} \alpha_2^{nc}} \norm{\boldsymbol{y}_{h,1}}_{[L^4(\Omega)]^2} < a < 1, \;\; \mbox{to get} \label{lifting_dataSmallnessCondition}
	\end{align} 
	\begin{align}
		\norm{\boldsymbol{u}_h}_{1,\mathcal{T}_h^{nc}} \leq \frac{C_F C_e^{nc}}{(1-a)\alpha_a^{nc}} \left(1 + \frac{C_a^{nc}}{\alpha_2^{nc}}\right) \|\boldsymbol{y}_{h,1}\|_{1, \mathcal{T}_h^{nc}}. \label{ncEnergyEstEq3}
	\end{align}
	Now an application of Lemma \ref{discreteLifiting} on \eqref{ncEnergyEstEq3} and substituting the resulting bound in \eqref{ncEnergyEstEq2} leads to the desired bound for $\boldsymbol{y}_{h,0}$.
\end{proof}
We make use of the following proposition which is a consequence of the Brouwer fixed point theorem to show the existence of discrete solutions \cite{temam2001navier}:
\begin{proposition}\label{ConseqOfBFPT}
	Let H be a finite dimensional Hilbert space with norm $|\cdot|_H$ and inner product $(\cdot,\cdot)_H$. Let $R: H \longrightarrow H$ be a continuous map such that there exists a positive constant $\rho$ satisfying
	$$(R(v),v)_H > 0 \;\; \forall \; v \in H \;\; \mbox{with} \;\; |v|_H = \rho.$$
	Then there exists a $w \in H$ such that $R(w) = 0, |w|_H \leq \rho$. 
\end{proposition}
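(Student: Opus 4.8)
The plan is to argue by contradiction and reduce to Brouwer's fixed point theorem on the closed ball $\overline{B}_\rho := \{v \in H : |v|_H \leq \rho\}$. Since $H$ is finite dimensional, it is linearly homeomorphic to some $\mathbb{R}^N$, so $\overline{B}_\rho$ is a nonempty, compact, convex set on which Brouwer's theorem applies to any continuous self-map.

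First I would suppose, for contradiction, that $R(v) \neq 0$ for every $v \in \overline{B}_\rho$. Then the map $\Phi : \overline{B}_\rho \to \overline{B}_\rho$ defined by
$$\Phi(v) := -\rho\,\frac{R(v)}{|R(v)|_H}$$
is well defined (the denominator never vanishes on $\overline{B}_\rho$ by the contradiction hypothesis) and continuous (as $R$ is continuous and $v \mapsto |v|_H$ is continuous and nonzero there), and clearly $|\Phi(v)|_H = \rho$, so $\Phi$ indeed maps $\overline{B}_\rho$ into itself. By Brouwer's fixed point theorem there exists $v^\ast \in \overline{B}_\rho$ with $\Phi(v^\ast) = v^\ast$. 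In particular $|v^\ast|_H = |\Phi(v^\ast)|_H = \rho$, so $v^\ast$ lies on the sphere where the hypothesis $(R(v^\ast), v^\ast)_H > 0$ is available.

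Next I would compute $|v^\ast|_H^2$ using the fixed point identity:
$$\rho^2 = |v^\ast|_H^2 = (v^\ast, v^\ast)_H = \big(\Phi(v^\ast), v^\ast\big)_H = -\rho\,\frac{(R(v^\ast), v^\ast)_H}{|R(v^\ast)|_H} < 0,$$
where the strict inequality is exactly the assumed positivity on $|v^\ast|_H = \rho$. This contradicts $\rho^2 > 0$. Hence the contradiction hypothesis fails, and there exists $w \in \overline{B}_\rho$, i.e.\ with $|w|_H \leq \rho$, such that $R(w) = 0$, which is the assertion.

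There is no serious obstacle here; this is a classical normalization-plus-Brouwer argument. The only point requiring a modicum of care is verifying that $\Phi$ is a genuine continuous self-map of $\overline{B}_\rho$ — which rests entirely on the contradiction hypothesis ensuring $R$ does not vanish on the closed ball — and identifying $H$ with a Euclidean space so that the standard statement of Brouwer's theorem is directly applicable.
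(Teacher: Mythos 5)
Your proof is correct and is essentially the standard argument: the paper itself gives no proof, citing Temam's book, where exactly this normalization-plus-Brouwer contradiction argument is used. No issues — the self-map $\Phi(v) = -\rho R(v)/|R(v)|_H$ is well defined under the contradiction hypothesis, its Brouwer fixed point necessarily lies on the sphere $|v|_H=\rho$, and the sign computation yields the contradiction as you state.
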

\begin{theorem}\label{ncExistenceState}
	Let $\boldsymbol{y}_{h,1}$ be the discrete lifting of $\boldsymbol{y}_h$ as defined in Section \ref{Sec:ncExistenceState}. Then there exists a discrete solution $(\boldsymbol{u}_h, \boldsymbol{y}_h) \in \boldsymbol{X}_h^{cr} \times \mathcal{M}_h^{cr}$ to \eqref{ncPh:Sred} satisfying the bounds of Lemma \ref{ncEnergyEstState}.
\end{theorem}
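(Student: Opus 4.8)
The plan is to apply the Brouwer-type fixed point result of Proposition~\ref{ConseqOfBFPT} to a suitably constructed map on the discrete kernel space, in the same spirit as the continuous existence proof (Theorem~\ref{StateExistence}). First I would reduce to homogeneous boundary data: write $\boldsymbol{y}_h = \boldsymbol{y}_{h,0} + \boldsymbol{y}_{h,1}$ with the discrete lifting $\boldsymbol{y}_{h,1} \in \mathcal{M}_h^{cr}$ from \eqref{discreteLifitingdefn} and Lemma~\ref{discreteLifiting}, so that the unknown becomes the pair $(\boldsymbol{u}_h, \boldsymbol{y}_{h,0}) \in \boldsymbol{X}_h^{cr} \times \mathcal{M}_h^{cr_0}$, which is a finite-dimensional Hilbert space $H$ with inner product induced by $\norm{\cdot}_{1,\mathcal{T}_h^{nc}}$ on each component. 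On $H$ I would define the continuous map $R = (R_1, R_2)$ by the Riesz representation of the residual of \eqref{ncPh:Sred}: for $(\boldsymbol{u}_h, \boldsymbol{y}_{h,0})$, let
\begin{align*}
	(R_1(\boldsymbol{u}_h, \boldsymbol{y}_{h,0}), \boldsymbol{v}_h) &:= a^h(\boldsymbol{y}_h; \boldsymbol{u}_h, \boldsymbol{v}_h) + c^h(\boldsymbol{u}_h, \boldsymbol{u}_h, \boldsymbol{v}_h) - d^h(\boldsymbol{y}_h, \boldsymbol{v}_h) - (\boldsymbol{U}, \boldsymbol{v}_h)_{r,r'},\\
	(R_2(\boldsymbol{u}_h, \boldsymbol{y}_{h,0}), \boldsymbol{s}_h) &:= a^h_{\boldsymbol{y}}(\boldsymbol{y}_h, \boldsymbol{s}_h) + c^h_{\boldsymbol{y}}(\boldsymbol{u}_h, \boldsymbol{y}_h, \boldsymbol{s}_h),
\end{align*}
where $\boldsymbol{y}_h = \boldsymbol{y}_{h,0} + \boldsymbol{y}_{h,1}$. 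Continuity of $R$ follows from the boundedness properties in Lemma~\ref{ncStateCty} (note the convective terms are polynomial, hence continuous). A zero of $R$ is exactly a solution of \eqref{ncPh:Sred}.

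Next I would verify the coercivity-type hypothesis of Proposition~\ref{ConseqOfBFPT}, namely that $(R(\boldsymbol{w}_h), \boldsymbol{w}_h)_H > 0$ for $\norm{\boldsymbol{w}_h}_H = \rho$ with $\rho$ chosen large enough. Testing $R_1$ with $\boldsymbol{u}_h$ and $R_2$ with $\boldsymbol{y}_{h,0}$, I would use: the positivity of the upwind trilinear forms \eqref{Positivitync:ch}--\eqref{Positivitync:chy}, which kill $c^h(\boldsymbol{u}_h,\boldsymbol{u}_h,\boldsymbol{u}_h)$ and $c^h_{\boldsymbol{y}}(\boldsymbol{u}_h,\boldsymbol{y}_{h,0},\boldsymbol{y}_{h,0})$ up to nonnegative edge terms; the coercivity \eqref{nc:Coercivity} of $a^h$ and $a^h_{\boldsymbol{y}}$; and the boundedness of $d^h$, the control term, and the lifting-related terms $a^h_{\boldsymbol{y}}(\boldsymbol{y}_{h,1}, \boldsymbol{y}_{h,0})$, $c^h_{\boldsymbol{y}}(\boldsymbol{u}_h, \boldsymbol{y}_{h,1}, \boldsymbol{y}_{h,0})$ via Lemma~\ref{ncStateCty} and the smallness condition \eqref{lifting_dataSmallnessCondition} on $\norm{\boldsymbol{y}_{h,1}}_{[L^4(\Omega)]^2}$. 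This is essentially the computation already performed in Lemma~\ref{ncEnergyEstState}: it yields $(R(\boldsymbol{w}_h), \boldsymbol{w}_h)_H \geq c_1 \norm{\boldsymbol{w}_h}_H^2 - c_2 \norm{\boldsymbol{w}_h}_H$ for constants $c_1, c_2 > 0$ depending only on the data, $\norm{\boldsymbol{y}^D}_{1/2,\Gamma}$ and $\norm{\boldsymbol{U}}_{L^r(\Omega)}$, so the bracket is strictly positive once $\norm{\boldsymbol{w}_h}_H = \rho > c_2/c_1$. Proposition~\ref{ConseqOfBFPT} then produces a zero $(\boldsymbol{u}_h, \boldsymbol{y}_{h,0})$ of $R$ with $\norm{(\boldsymbol{u}_h, \boldsymbol{y}_{h,0})}_H \leq \rho$; setting $\boldsymbol{y}_h = \boldsymbol{y}_{h,0} + \boldsymbol{y}_{h,1}$ gives a solution of \eqref{ncPh:Sred}, and the a priori bounds of Lemma~\ref{ncEnergyEstState} hold because that lemma applies to any solution of \eqref{ncPh:Sred}.

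I expect the main obstacle to be organizing the coupled estimate cleanly: because $a^h(\boldsymbol{y}_h; \boldsymbol{u}_h, \boldsymbol{v}_h)$ depends on the temperature through $\nu(\boldsymbol{y}_h)$ and the two equations are linked through $c^h_{\boldsymbol{y}}(\boldsymbol{u}_h, \cdot, \cdot)$ and the buoyancy $d^h$, one cannot treat $\boldsymbol{u}_h$ and $\boldsymbol{y}_{h,0}$ separately. The device is to exploit that $\nu_1 \le \nu(\boldsymbol{y}_h) \le \nu_2$ (so $a^h$ is coercive uniformly in the temperature argument) and to absorb the cross term $c^h_{\boldsymbol{y}}(\boldsymbol{u}_h, \boldsymbol{y}_{h,1}, \boldsymbol{y}_{h,0})$ using the smallness of the lifting \eqref{lifting_dataSmallnessCondition} together with \cite[Lemma 3.3]{oc_ddf_I}, exactly as in the proof of Lemma~\ref{ncEnergyEstState}; this reduces the vector inequality to a scalar one in $\norm{\boldsymbol{w}_h}_H$. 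A secondary technical point is checking continuity of $R$ at the discrete level, but since all forms are polynomial in the finite-dimensional arguments and the upwind fluxes depend continuously (piecewise linearly) on $\boldsymbol{w}_h$, this is routine. No uniqueness is claimed here; that is deferred to Theorem~\ref{ncUniqueness}.
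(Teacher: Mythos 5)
Your proposal follows essentially the same route as the paper's proof: reduce to $(\boldsymbol{u}_h,\boldsymbol{y}_{h,0})$ via the discrete lifting, define the residual map(s) on $\boldsymbol{X}_h^{cr}\times\mathcal{M}_h^{cr_0}$, verify continuity from Lemma \ref{ncStateCty} and positivity on a large sphere using \eqref{Positivitync:ch}--\eqref{Positivitync:chy}, \eqref{nc:Coercivity}, the smallness condition \eqref{lifting_dataSmallnessCondition}, and the bounds of Lemma \ref{ncEnergyEstState}, then invoke Proposition \ref{ConseqOfBFPT}. The only cosmetic difference is that you work with one coupled map $R$ on the product space while the paper defines $Q^{\boldsymbol{u}}$ and $Q^{\boldsymbol{y}}$ separately before combining them into $Q^{\boldsymbol{u},\boldsymbol{y}}$, which is immaterial.
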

\begin{proof}
	Define the maps $Q^{\boldsymbol{u}}$ and $Q^{\boldsymbol{y}}$ on $\boldsymbol{X}_h^{cr}$ and $\mathcal{M}_h^{cr_0}$ for all $\boldsymbol{v}_h \in \boldsymbol{X}_h^{cr}$ and $\boldsymbol{s}_h \in \mathcal{M}_h^{cr_0}$, respectively, as follows:
	\begin{align*}
		(Q^{\boldsymbol{u}}(\boldsymbol{u}_h), \boldsymbol{v}_h)
		&:= a^h(\boldsymbol{y}_h; \boldsymbol{u}_h, \boldsymbol{v}_h)
		+ c^h(\boldsymbol{u}_h, \boldsymbol{u}_h, \boldsymbol{v}_h)
		- d^h(\boldsymbol{y}_h, \boldsymbol{v}_h),\\
		(Q^{\boldsymbol{y}}(\boldsymbol{y}_{h,0}), \boldsymbol{s}_h)
		&:= a^h_{\boldsymbol{y}}(\boldsymbol{y}_{h,0}, \boldsymbol{s}_h)
		+ c^h_{\boldsymbol{y}}(\boldsymbol{u}_h, \boldsymbol{y}_{h,0}, \boldsymbol{s}_h)
		- a^h_{\boldsymbol{y}}(\boldsymbol{y}_{h,1}, \boldsymbol{s}_h)
		- c^h_{\boldsymbol{y}}(\boldsymbol{u}_h, \boldsymbol{y}_{h,1}, \boldsymbol{s}_h).
	\end{align*}
	The Continuity of $Q^{\boldsymbol{u}}$ and $Q^{\boldsymbol{y}}$ readily follows by using the boundedness properties of Lemma \ref{ncStateCty}. Let $Q^{\boldsymbol{u},\boldsymbol{y}} := Q^{\boldsymbol{u}} \times Q^{\boldsymbol{y}}$ with $\norm{(\boldsymbol{u}_h, \boldsymbol{y}_{h,0})} = \norm{\boldsymbol{u}_h}_{\boldsymbol{X}_h^{cr}} + \norm{\boldsymbol{y}_{h,0}}_{\mathcal{M}_h^{cr_0}}.$ Then the continuity of $Q^{\boldsymbol{u}, \boldsymbol{y}}$ follows from the continuity of maps  $Q^{\boldsymbol{u}}$ and $Q^{\boldsymbol{y}}$ and the definition of $Q^{\boldsymbol{u}, \boldsymbol{y}}$. In order to make use of Proposition \ref{ConseqOfBFPT}, which would imply that $Q^{\boldsymbol{u,\boldsymbol{y}}}$ has a fixed point, it remains to show the coercivity of $Q^{\boldsymbol{u},\boldsymbol{y}}$. Since $\boldsymbol{u}_h \in \boldsymbol{X}_h^{cr}$ and thanks to \eqref{nc:Coercivity} and \eqref{Positivitync:ch}, the form $a^h(\cdot; \cdot, \cdot) + c^h(\cdot, \cdot, \cdot)$ is coercive, and on using the boundedness properties and Lemma \ref{ncEnergyEstState}, we get
	\begin{align*}
		(Q^{\boldsymbol{u}}(\boldsymbol{u}_h), \boldsymbol{u}_h) &\geq \alpha_a^{nc} \norm{\boldsymbol{u}_h}^2_{1,\mathcal{T}_h^{nc}} - C_F C_e^{nc} \norm{\boldsymbol{y}_h}_{1,\mathcal{T}_h^{nc}} \norm{\boldsymbol{u}_h}_{1,\mathcal{T}_h^{nc}} \nonumber\\
		&\geq \alpha_a^{nc} \norm{\boldsymbol{u}_h}^2_{1,\mathcal{T}_h^{nc}} - C_F C_e^{nc} M_{\boldsymbol{y}}^{nc} M_{\boldsymbol{u}}^{nc}. 
	\end{align*}
	It follows that $(Q^{\boldsymbol{u}}(\boldsymbol{u}_h), \boldsymbol{u}_h) > 0$ for $\norm{\boldsymbol{u}_h}_{1,\mathcal{T}_h^{nc}}^2 = \kappa_1^{nc}$, and $\kappa_1^{nc}$ is sufficiently large: more precisely
	$$\kappa_1^{nc} > \left(\frac{1}{\alpha_a^{nc}} \left(C_e^{nc} M_{\boldsymbol{u}}^{nc} C_F M_{\boldsymbol{y}}^{nc}\right)\right).$$
	Similarly, invoking \eqref{nc:Coercivity} and \eqref{Positivitync:chy}, the form $a^h_{\boldsymbol{y}}(\cdot;\cdot,\cdot) + c^h_{\boldsymbol{y}}(\cdot,\cdot,\cdot)$ is coercive, yielding
	\begin{align*}
		(Q^{\boldsymbol{y}}(\boldsymbol{y}_{h,0}), \boldsymbol{y}_{h,0}) &\geq \alpha_2^{nc} \norm{\boldsymbol{y}_{h,0}}^2_{1,\mathcal{T}_h^{nc}} - \hat{C}_a^{nc} \norm{\boldsymbol{y}_{h,1}}_{1,\mathcal{T}_h^{nc}} \norm{\boldsymbol{y}_{h,0}}_{1,\mathcal{T}_h^{nc}} - C_e^{nc} \norm{\boldsymbol{u}_h}_{1,\mathcal{T}_h^{nc}} \norm{\boldsymbol{y}_{h,1}}_{1,\mathcal{T}_h^{nc}} \norm{\boldsymbol{y}_{h,0}}_{1,\mathcal{T}_h^{nc}} \\
		&\geq \alpha_2^{nc} \norm{\boldsymbol{y}_{h,0}}^2_{1,\mathcal{T}_h^{nc}} -  \hat{C}_a^{nc} C_{\boldsymbol{y}}^{nc} \norm{\boldsymbol{y}_{h,1}}_{1,\mathcal{T}_h^{nc}}^2-   C_e^{nc} M_{\boldsymbol{u}}^{nc}  C_{\boldsymbol{y}}^{nc} \norm{\boldsymbol{y}_{h,1}}_{1,\mathcal{T}_h^{nc}}^2.
	\end{align*}
	After an application of Lemma \ref{discreteLifiting}, we conclude that $(Q^{\boldsymbol{y}}(\boldsymbol{y}_{h,0}), \boldsymbol{y}_{h,0}) > 0$ for $\norm{\boldsymbol{y}_{h,0}}_{1,\mathcal{T}_h^{nc}}^2 = \kappa_2^{nc}$, and $\kappa_2^{nc}$ is sufficiently large: more precisely
	$$\kappa_2^{nc} > \left\{\frac{1}{\alpha_2^{nc}} \left(\hat{C}_a^{nc} C_{\boldsymbol{y}}^{nc}(1+C_e^{nc} M_{\boldsymbol{u}}^{nc})\norm{\boldsymbol{y}^D}_{1/2,\Gamma}^2\right)\right\}.$$
	From the coercivity of $Q^{\boldsymbol{y}}$ and $Q^{\boldsymbol{u}}$ and the definition of $Q^{\boldsymbol{u}, \boldsymbol{y}}$, we deduce that $(Q^{\boldsymbol{u}, \boldsymbol{y}}(\boldsymbol{u}_h, \boldsymbol{y}_{h,0}),(\boldsymbol{u}_h, \boldsymbol{y}_{h,0})) > 0$ for $\norm{(\boldsymbol{u}_h, \boldsymbol{y}_{h,0})} = \kappa_1^{nc} + \kappa_2^{nc} > 0.$
\end{proof}

\subsection{Uniqueness of discrete solutions}
\begin{theorem}\label{ncUniqueness}
	Let $(\boldsymbol{u}_h,\boldsymbol{y}_h) \in \boldsymbol{X}_h^{cr} \cap \boldsymbol{W}^{1,3}(\mathcal{T}_h) \times \mathcal{M}_h^{cr}$ be a solution of the reduced problem \eqref{ncPh:Sred}, and assume that
	\begin{align}\label{ncUniquenessAssumption}
		\norm{\nabla_h \boldsymbol{u}_h}_{L^3(\Omega)} \leq \tilde{M}, 
	\end{align}
	where $\tilde{M}>0$ is chosen such that following smallness assumption (see Remark \ref{smalldata})
	\begin{align}\label{Disc_StateUniquenessCondition}
		\alpha_a^{nc} > \frac{C_e^{nc} M^{nc}_{\boldsymbol{y}}}{\alpha_2^{nc}} \left(\gamma_{\nu} \tilde{M} + C_{F} \right) + C_e^{nc}  M_{\boldsymbol{u}}^{nc}
	\end{align}
	holds. If the condition \eqref{Disc_StateUniquenessCondition} is satisfied, then the solution of  \eqref{ncPh:Sred} is unique. 
\end{theorem}
\begin{proof}
	Let $(\boldsymbol{u}_h^a, \boldsymbol{y}_h^a)$ and $(\boldsymbol{u}_h^b,\boldsymbol{y}_h^b)$ solve \eqref{ncPh:Sred} and let $(\boldsymbol{\alpha}_h,\boldsymbol{\beta}_h) := (\boldsymbol{u}_h^a-\boldsymbol{u}_h^b, \boldsymbol{y}_h^a-\boldsymbol{y}_h^b)$. Then subtracting the corresponding variational formulations for all $\boldsymbol{v}_h \in \boldsymbol{X}_h^{cr}$, $\boldsymbol{s}_h \in \mathcal{M}_h^{cr_0}$, we have	
	\begin{align}\label{ncStateUniquenessEq1}
		\begin{aligned}
			a^h(\boldsymbol{y}_h^a;\boldsymbol{u}_h^a,\boldsymbol{v}_h) - a^h(\boldsymbol{y}_h^b;\boldsymbol{u}_h^b,\boldsymbol{v}_h) + c^h(\boldsymbol{u}_h^a,\boldsymbol{u}_h^a,\boldsymbol{v}_h) - c^h(\boldsymbol{u}_h^b, \boldsymbol{u}_h^b,\boldsymbol{v}_h) &= d^h(\boldsymbol{y}_h^a,\boldsymbol{v}_h) - d(\boldsymbol{y}_h^b,\boldsymbol{v}_h), \\
			a^h_{\boldsymbol{y}}(\boldsymbol{y}_h^a,\boldsymbol{s}_h) - a^h_{\boldsymbol{y}}(\boldsymbol{y}_h^b,\boldsymbol{s}_h) + c^h_{\boldsymbol{y}}(\boldsymbol{u}_h^a,\boldsymbol{y}_h^a,\boldsymbol{s}_h) - c^h_{\boldsymbol{y}}(\boldsymbol{u}_h^b,\boldsymbol{y}_h^b,\boldsymbol{s}_h) &= 0.
		\end{aligned}
	\end{align}	
	We can rewrite the terms in \eqref{ncStateUniquenessEq1}  as
	\begin{align*}
		a^h(\boldsymbol{y}_h^a;\boldsymbol{u}_h^a,\boldsymbol{v}_h) - a^h(\boldsymbol{y}_h^b;\boldsymbol{u}_h^b,\boldsymbol{v}_h) &= a^h(\boldsymbol{y}_h^a;\boldsymbol{\alpha}_h,\boldsymbol{v}_h) + a^h(\boldsymbol{y}_h^a;\boldsymbol{u}_h^b,\boldsymbol{v}_h) - a^h(\boldsymbol{y}_h^b;\boldsymbol{u}_h^b,\boldsymbol{v}_h), \\
		c^h(\boldsymbol{u}_h^a,\boldsymbol{u}_h^a,\boldsymbol{v}_h) - c^h(\boldsymbol{u}_h^b,\boldsymbol{u}_h^b,\boldsymbol{v}_h) &= c^h(\boldsymbol{u}_h^a,\boldsymbol{\alpha}_h,\boldsymbol{v}_h) - c^h(\boldsymbol{u}_h^a,\boldsymbol{u}_h^b,\boldsymbol{v}_h) + c^h(\boldsymbol{u}_h^b,\boldsymbol{u}_h^b,\boldsymbol{v}_h), \\
		c^h_{\boldsymbol{y}}(\boldsymbol{u}_h^a,\boldsymbol{y}_h^a,\boldsymbol{s}_h) - c^h_{\boldsymbol{y}}(\boldsymbol{u}_h^b,\boldsymbol{y}_h^b,\boldsymbol{s}_h) &= c^h_{\boldsymbol{y}}(\boldsymbol{u}_h^a,\boldsymbol{\beta}_h,\boldsymbol{s}_h) - c^h_{\boldsymbol{y}}(\boldsymbol{u}_h^a,\boldsymbol{y}_h^b,\boldsymbol{s}_h) +
		c^h_{\boldsymbol{y}}(\boldsymbol{u}_h^b,\boldsymbol{y}_h^b,\boldsymbol{s}_h).
	\end{align*}	
	Utilizing H\"older's inequality, stability properties discussed in Section \ref{Sec:DSP}, Lemma \ref{ncEnergyEstState}, discrete Sobolev embedding \eqref{DiscSobEmbnc} and the assumption \eqref{ncUniquenessAssumption}, we can establish the following bounds:
	\begin{align}
		|a^h(\boldsymbol{y}_h^a;\boldsymbol{u}_h^b,\boldsymbol{v}_h) - a^h(\boldsymbol{y}_h^b;\boldsymbol{u}_h^b,\boldsymbol{v}_h)| &\leq \sum_{K \in {\mathcal{T}_h}} \norm{\nu(\boldsymbol{y}_h^a) - \nu(\boldsymbol{y}_h^b)}_{L^6(K)} \norm{\nabla_h \boldsymbol{u}_h^a}_{L^3(K)} \norm{\nabla_h \boldsymbol{v}_h}_{L^2(K)} \nonumber\\
		&\leq \gamma_{\nu}  \norm{\boldsymbol{y}_h^a - \boldsymbol{y}_h^b}_{L^6(\Omega)} \norm{\nabla_h \boldsymbol{u}_h^a}_{L^3(\Omega)} \norm{\boldsymbol{v}_h}_{1, \mathcal{T}_h^{nc}} \nonumber\\
		&\leq \gamma_{\nu} \; \tilde{M} \norm{\boldsymbol{\beta}_h}_{1,\mathcal{T}_h^{nc}} \norm{ \boldsymbol{v}_h}_{1,\mathcal{T}_h^{nc}}, \label{ahDifference}\\
		|c^h(\boldsymbol{u}_h^b,\boldsymbol{u}_h^b,\boldsymbol{v}_h) - c^h(\boldsymbol{u}_h^a,\boldsymbol{u}_h^b,\boldsymbol{v}_h)| &\leq C_e^{nc} \norm{\boldsymbol{\alpha}_h}_{1,\mathcal{T}_h^{nc}} \norm{\boldsymbol{u}_h^b}_{1,\mathcal{T}_h^{nc}} \norm{\boldsymbol{v}_h}_{1,\mathcal{T}_h^{nc}} \nonumber\\ 
		&\leq C_e^{nc} M_{\boldsymbol{u}}^{nc} \norm{\boldsymbol{\alpha}_h}_{1,\mathcal{T}_h^{nc}} \norm{\boldsymbol{v}_h}_{1,\mathcal{T}_h^{nc}}, \label{chDifference}\\
		|c^h_{\boldsymbol{y}}(\boldsymbol{u}_h^b,\boldsymbol{y}_h^b,\boldsymbol{s}_h) -
		c^h_{\boldsymbol{y}}(\boldsymbol{u}_h^a,\boldsymbol{y}_h^b,\boldsymbol{s}_h)| &\leq C_e^{nc} \norm{\boldsymbol{\alpha}_h}_{1,\mathcal{T}_h^{nc}} \norm{\boldsymbol{y}_h^b}_{1,\mathcal{T}_h^{nc}} \norm{\boldsymbol{s}_h}_{1,\mathcal{T}_h^{nc}} \nonumber\\
		&\leq C_e^{nc} M_{\boldsymbol{y}}^{nc} \norm{\boldsymbol{\alpha}_h}_{1,\mathcal{T}_h^{nc}} \norm{\boldsymbol{s}_h}_{1,\mathcal{T}_h^{nc}}, \label{chyDiff}\\
		|d^h(\boldsymbol{y}_h^a, \boldsymbol{v}_h) - d^h(\boldsymbol{y}_h^b, \boldsymbol{v}_h)| &\leq C_F \norm{\boldsymbol{\beta}_h}_{1,\mathcal{T}_h^{nc}} \norm{\boldsymbol{v}_h}_{1,\mathcal{T}_h^{nc}}. \label{dhDifference}
	\end{align}  
	Choosing $\boldsymbol{v}_h = \boldsymbol{\alpha}_h \in \boldsymbol{X}_h^{cr}$ and $\boldsymbol{s}_h = \boldsymbol{\beta}_{h}$ in \eqref{ncStateUniquenessEq1} and exploiting \eqref{nc:Coercivity}, \eqref{Positivitync:ch} and \eqref{Positivitync:chy} gives
	\begin{align}
		\alpha_a^{nc} \norm{\boldsymbol{\alpha}_h}^2_{1,\mathcal{T}_h^{nc}} &\leq  |a^h(\boldsymbol{y}_h^a;\boldsymbol{u}_h^a,\boldsymbol{\alpha}_h) - a^h(\boldsymbol{y}_h^a;\boldsymbol{u}_h^b,\boldsymbol{\alpha}_h)| + |c^h(\boldsymbol{u}_h^b,\boldsymbol{u}_h^b,\boldsymbol{\alpha}_h) - c^h(\boldsymbol{u}_h^a,\boldsymbol{u}_h^b,\boldsymbol{\alpha}_h)|  \label{ncStateUniquenessEq2}\\
		&~~~~~+ |d^h(\boldsymbol{y}_h^a,\boldsymbol{\alpha}_h) - d^h(\boldsymbol{y}_h^b,\boldsymbol{\alpha}_h)|, \nonumber\\
		\alpha_2^{nc} \norm{\boldsymbol{\beta}_h}^2_{1,\mathcal{T}_h^{nc}} &\leq |c^h_{\boldsymbol{y}}(\boldsymbol{u}_h^b,\boldsymbol{y}_h^b,\boldsymbol{\beta}_h) -
		c^h_{\boldsymbol{y}}(\boldsymbol{u}_h^a,\boldsymbol{y}_h^b,\boldsymbol{\beta}_h)|. \label{ncStateUniquenessEq3}
	\end{align}
	Invoking the above established bounds in \eqref{ncStateUniquenessEq2} and \eqref{ncStateUniquenessEq3}, we get
	\begin{align*}
		\alpha_a^{nc} \norm{\boldsymbol{\alpha}_h}_{1,\mathcal{T}_h^{nc}} &\leq \big(\gamma_{\nu} \; \tilde{M} + C_F \big) \norm{\boldsymbol{\beta}_h}_{1, \mathcal{T}_h^{nc}} + C_e^{nc} \; M_{\boldsymbol{u}}^{nc} \norm{\boldsymbol{\alpha}_h}_{1,\mathcal{T}_h^{nc}},\\
		\alpha_2^{nc} \norm{\boldsymbol{\beta}_h}_{1,\mathcal{T}_h^{nc}} &\leq C_e^{nc} M_{\boldsymbol{y}}^{nc} \norm{\boldsymbol{\alpha}_h}_{1,\mathcal{T}_h^{nc}}.
	\end{align*}	
	Adding the above two inequalities and simplifying gives the desired uniqueness of the discrete solutions.
\end{proof}
\section{A priori error analysis}\label{NA:apriori:State}
Let $\Pi_h: Q \longrightarrow Q_h^0$ be such that $(\Pi_h p - p, q_h) = 0$ for all $p \in Q$ and $q_h \in Q_h^0$ denote the pressure interpolation operator, $\boldsymbol{\Pi}_{nc}$ defined in \eqref{nonConf_interpolation} denote the nonconforming interpolation operator and without loss of generality let $\boldsymbol{P}_e$ denote the standard edge projection on to piecewise constants on $e$. Then under adequate regularity assumptions, the following approximation properties hold (cf. \cite{MR3213584,Ern_Book,EdgeProjection}):
\begin{align}\label{nc:approximation_properties}
	\begin{aligned}
		&\norm{\boldsymbol{u} - \boldsymbol{\Pi}_{nc} \boldsymbol{u}}_{1,\mathcal{T}_h^{nc}} \leq C (h^{3/2+\delta} \sqrt{\sigma} + h^{1/2+\delta} \sqrt{\nu_2}) \norm{\boldsymbol{u}}_{3/2+\delta,\Omega}, \\
		&\norm{\boldsymbol{y} - \boldsymbol{\Pi}_{nc} \boldsymbol{y}}_{1,\mathcal{T}_h^{nc}} \leq C h^{3/2+\delta} \sqrt{\bar{\sigma}} \norm{\boldsymbol{y}}_{3/2+\delta,\Omega}, \;\; \norm{p - \Pi_h p}_{0,\Omega} \leq C h^{1/2+\delta} \norm{p}_{1/2+\delta,\Omega}, \\
		& \norm{\boldsymbol{v} - \boldsymbol{P}_e \boldsymbol{v}}_{0,e} \leq C h^{1/2}_{K} |\boldsymbol{v}|_{1,K}, \; \forall \; \boldsymbol{v} \in \boldsymbol{H}^1(K) \;\left(\mbox{or,}\; \boldsymbol{v} \in \left[H^1(K)\right]^2\right),  \; e \in \mathcal{E}(\mathcal{T}_h).
	\end{aligned}
\end{align}
\begin{theorem}\label{NA_State}
Let $(\boldsymbol{u}, p, \boldsymbol{y})$ and $(\boldsymbol{u}_h, p_h, \boldsymbol{y}_h)$ be the solutions of \eqref{P:S} and \eqref{ncPh:S}, respectively. Furthermore, for $\delta \in (0,\frac{1}{2})$ and $\boldsymbol{u} \in \boldsymbol{H}_0^1(\Omega) \cap \boldsymbol{H}^{3/2 + \delta}(\Omega),\;p \in L_0^2(\Omega) \cap H^{1/2 + \delta}(\Omega),\;\boldsymbol{y} \in \left[H^{3/2 + \delta}(\Omega) \right]^2$. Then under the smallness assumption \eqref{Disc_StateUniquenessCondition}, there exists a positive constant $C$ independent of the mesh parameter $h$ such that
	\begin{align}
		\norm{\boldsymbol{u} - \boldsymbol{u}_h}_{1,\mathcal{T}_h^{nc}} + \norm{\boldsymbol{y} - \boldsymbol{y}_h}_{1,\mathcal{T}_h^{nc}} &\leq C h^{1/2+\delta} \big(\norm{\boldsymbol{u}}_{3/2+\delta,\Omega} + \norm{\boldsymbol{y}}_{3/2+\delta,\Omega} \big), \label{aPriori_uy} \\
		\norm{p - p_h}_{0,\Omega} &\leq C h^{1/2+\delta} \big(\norm{\boldsymbol{u}}_{3/2+\delta,\Omega} + \norm{\boldsymbol{y}}_{3/2+\delta,\Omega} + \norm{p}_{1/2+\delta,\Omega}\big). \label{aPriori_p}
	\end{align} 	
\end{theorem}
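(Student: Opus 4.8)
The plan is to run a nonconforming (second Strang) argument. First I would split each error into an interpolation part and a discrete part:
$\boldsymbol{u}-\boldsymbol{u}_h=(\boldsymbol{u}-\boldsymbol{\Pi}_{nc}\boldsymbol{u})+(\boldsymbol{\Pi}_{nc}\boldsymbol{u}-\boldsymbol{u}_h)=:\boldsymbol{\eta}_{\boldsymbol{u}}+\boldsymbol{e}_{\boldsymbol{u}}$, and likewise $\boldsymbol{y}-\boldsymbol{y}_h=\boldsymbol{\eta}_{\boldsymbol{y}}+\boldsymbol{e}_{\boldsymbol{y}}$, $p-p_h=(p-\Pi_h p)+(\Pi_h p-p_h)=:\eta_p+e_p$. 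The interpolation parts are controlled directly by \eqref{nc:approximation_properties}, giving the stated $h^{1/2+\delta}$ rate. By the commuting property of the Crouzeix--Raviart interpolant one has $\mathrm{div}(\boldsymbol{\Pi}_{nc}\boldsymbol{u})=0$, so $\boldsymbol{e}_{\boldsymbol{u}}\in\boldsymbol{X}_h^{cr}$; testing the momentum error equation against $\boldsymbol{e}_{\boldsymbol{u}}$ therefore kills the term $b^h(\boldsymbol{e}_{\boldsymbol{u}},\cdot)$, which is precisely what makes the velocity bound \eqref{aPriori_uy} pressure-robust (independent of $p$).

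The key object is the nonconforming consistency residual. Since $(\boldsymbol{u},p,\boldsymbol{y})$ enjoys $\boldsymbol{H}^{3/2+\delta}$ regularity by Theorem \ref{Regularity}, it may be inserted into the discrete forms; subtracting \eqref{ncPh:Sred} produces, besides the usual bilinear/trilinear difference terms, a residual $\mathcal{E}_h(\boldsymbol{u},p,\boldsymbol{y};\boldsymbol{v}_h)$ with two sources: (i) the broken integration by parts of $-\boldsymbol{\mathrm{div}}(\nu(T)\nabla\boldsymbol{u})$ and of $\nabla p$, which yields edge integrals $\sum_{e}\int_e \avg{\nu(T)\nabla\boldsymbol{u}-p\,\mathrm{I}}\cdot\jump{\boldsymbol{v}_h}\,ds$; and (ii) the upwind stabilization terms evaluated on the exact (hence jump-free) solution. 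For (i) I would use that $\int_e \boldsymbol{c}\cdot\jump{\boldsymbol{v}_h}\,ds=0$ for every edgewise-constant $\boldsymbol{c}$ (midpoint continuity of $\boldsymbol{V}_h^{cr}$), so the residual equals $\sum_e\int_e (\mathrm{I}-\boldsymbol{P}_e)\avg{\nu(T)\nabla\boldsymbol{u}-p\,\mathrm{I}}\cdot\jump{(\mathrm{I}-\boldsymbol{P}_e)\boldsymbol{v}_h}\,ds$; combining the edge-projection estimate in \eqref{nc:approximation_properties}, the trace/inverse inequality \eqref{inverseEst}, and the shape regularity scaling gives the bound $C h^{1/2+\delta}(\norm{\boldsymbol{u}}_{3/2+\delta,\Omega}+\norm{p}_{1/2+\delta,\Omega})\norm{\boldsymbol{v}_h}_{1,\mathcal{T}_h^{nc}}$. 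For (ii) the upwind flux $\hat{\boldsymbol{w}}^{up}_h$ vanishes on the continuous solution modulo $\jump{\boldsymbol{u}}=0$, so after rewriting the stabilization around $\boldsymbol{\Pi}_{nc}\boldsymbol{u}$ it is controlled by $\norm{\boldsymbol{\eta}_{\boldsymbol{u}}}_{1,\mathcal{T}_h^{nc}}$ via \eqref{inverseEst}, again of order $h^{1/2+\delta}$. The identical argument applies to the advection--diffusion equation with $\boldsymbol{D}\nabla\boldsymbol{y}$ replacing $\nu(T)\nabla\boldsymbol{u}$ (no pressure term there).

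With the residual in hand I would take $\boldsymbol{v}_h=\boldsymbol{e}_{\boldsymbol{u}}$ and $\boldsymbol{s}_h=\boldsymbol{e}_{\boldsymbol{y},0}$ (the homogeneous part after the discrete lifting of Lemma \ref{discreteLifiting}), use the coercivity \eqref{nc:Coercivity} and positivity \eqref{Positivitync:ch}--\eqref{Positivitync:chy}, bound the viscosity discrepancy $\nu(\boldsymbol{y})-\nu(\boldsymbol{y}_h)$ exactly as in the uniqueness proof of Theorem \ref{ncUniqueness} (Hölder $L^6$--$L^3$--$L^2$, the assumption \eqref{ncUniquenessAssumption}, and the energy bounds of Lemma \ref{ncEnergyEstState}), and bound the convective differences with Lemma \ref{ncStateCty} and the discrete Sobolev embedding \eqref{DiscSobEmbnc}. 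Moving the nonlinear contributions to the left and invoking the smallness condition \eqref{StateUniquenessCondition} (together with its continuous counterpart \eqref{StateUniquenessCondition:Cts} used to bound $\boldsymbol{u},\boldsymbol{y}$ themselves) makes the coefficient of $\norm{\boldsymbol{e}_{\boldsymbol{u}}}_{1,\mathcal{T}_h^{nc}}+\norm{\boldsymbol{e}_{\boldsymbol{y}}}_{1,\mathcal{T}_h^{nc}}$ strictly positive, yielding \eqref{aPriori_uy} after adding back the interpolation errors. For the pressure estimate \eqref{aPriori_p} I would test the momentum error equation with a general $\boldsymbol{v}_h\in\boldsymbol{V}_h^{cr_0}$, isolate $b^h(\boldsymbol{v}_h,e_p)$, bound the remaining right-hand side by $C h^{1/2+\delta}(\cdots)\norm{\boldsymbol{v}_h}_{1,\mathcal{T}_h^{nc}}$ using the velocity/$\boldsymbol{y}$ bounds just obtained and the consistency residual, and conclude by the discrete inf-sup condition \eqref{discreteinfsup}, finally adding $\norm{\eta_p}_{0,\Omega}$.

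The main obstacle is the consistency-error analysis: one must track simultaneously the nonconforming jump residuals from the diffusion and pressure terms, the extra residual generated by the upwind stabilization acting on the exact solution, and the fact that the available regularity $\boldsymbol{H}^{3/2+\delta}$ only barely suffices for the edge-projection estimates in \eqref{nc:approximation_properties} to deliver the sharp $h^{1/2+\delta}$ order, all while preserving the pressure-independence of the velocity bound. The remaining ingredients (absorption of the nonlinear terms, the uniqueness-style bookkeeping, and the inf-sup pressure recovery) are routine adaptations of the arguments already carried out for Theorems \ref{ncExistenceState} and \ref{ncUniqueness}.
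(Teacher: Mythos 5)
Your proposal follows essentially the same route as the paper's proof: the same splitting of each error into an interpolation part and a discrete part (the paper works with generic $(\boldsymbol{\chi},\psi,\boldsymbol{\theta})$ and only at the end sets $\boldsymbol{\chi}=\boldsymbol{\Pi}_{nc}\boldsymbol{u}$, $\psi=\Pi_h p$, $\boldsymbol{\theta}=\boldsymbol{\Pi}_{nc}\boldsymbol{y}$), the same elementwise integration by parts producing consistency terms that are bounded through the edge projection $\boldsymbol{P}_e$ and \eqref{nc:approximation_properties}, the same absorption of the nonlinear couplings using the bounds of Lemma \ref{ncStateCty}, Lemma \ref{ncEnergyEstState} and the smallness condition \eqref{StateUniquenessCondition}, and the same inf-sup recovery \eqref{discreteinfsup} of the pressure. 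Your observations that the upwind fluxes vanish when the exact (continuous) solution is inserted and that choosing a divergence-free $\boldsymbol{\chi}$ puts the test function in $\boldsymbol{X}_h^{cr}$ are implicit in the paper's argument.

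The one substantive discrepancy is the pressure contribution to the consistency error, and it matters for the exact form of \eqref{aPriori_uy}. In the paper, the inserted-solution identity \eqref{aPriori_eq1} retains $b^h(\boldsymbol{v}_h,p)$ and records only the viscous flux $\sum_{K}\int_{\partial K}\nu(T)\nabla\boldsymbol{u}\cdot\boldsymbol{n}\,\boldsymbol{v}_h\,ds$ as the consistency term $T_4$, so $\norm{p}_{1/2+\delta,\Omega}$ never enters the velocity--temperature bound and \eqref{aPriori_uy} comes out pressure-free. You instead fold $-p\,\mathrm{I}$ into the flux, and your consistency bound is $Ch^{1/2+\delta}\big(\norm{\boldsymbol{u}}_{3/2+\delta,\Omega}+\norm{p}_{1/2+\delta,\Omega}\big)\norm{\boldsymbol{v}_h}_{1,\mathcal{T}_h^{nc}}$; fed into the energy argument this yields \eqref{aPriori_uy} only with an additional $\norm{p}_{1/2+\delta,\Omega}$ on the right-hand side, which is not the pressure-robust form stated in the theorem. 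Note also that your stated mechanism for pressure-robustness---$\tilde{e}_{\boldsymbol{u}}\in\boldsymbol{X}_h^{cr}$ annihilating $b^h(\tilde{e}_{\boldsymbol{u}},\cdot)$---is not sufficient on its own: the broken integration by parts of $\nabla p$ leaves the edge terms $\sum_{K}\int_{\partial K}p\,\tilde{e}_{\boldsymbol{u}}\cdot\boldsymbol{n}\,ds$, which do not vanish for broken divergence-free Crouzeix--Raviart functions and are exactly the terms you subsequently bound by $\norm{p}_{1/2+\delta,\Omega}$. So, as written, your argument is internally consistent only for the $p$-dependent version of \eqref{aPriori_uy}; to reproduce the paper's statement you would have to argue these pressure edge terms away for the unmodified scheme \eqref{ncPh:S} (the paper's own proof simply does not carry them). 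Apart from this point, the nonlinear bookkeeping, the use of \eqref{StateUniquenessCondition:Cts}--\eqref{StateUniquenessCondition}, and the inf-sup pressure step coincide with the paper's proof.
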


\begin{proof}
	Let us decompose the errors into
	\begin{align}\label{aPriori_eq0}
		\begin{aligned}
			e_{\boldsymbol{u}} := (\boldsymbol{u} - \boldsymbol{\chi}) + (\boldsymbol{\chi} - \boldsymbol{u}_h) &= \hat{e}_{\boldsymbol{u}} + \tilde{e}_{\boldsymbol{u}}, \;\; e_{\boldsymbol{y}} := (\boldsymbol{y} - \boldsymbol{\theta}) + (\boldsymbol{\theta} - \boldsymbol{y}_h) = \hat{e}_{\boldsymbol{y}} + \tilde{e}_{\boldsymbol{y}} \\ 
			&\hspace{-1cm}e_{p} := (p - \psi) + (\psi - p_h) = \hat{e}_{p} + \tilde{e}_{p}, \\ 
		\end{aligned}
	\end{align}
	where, $(\boldsymbol{\chi}, \psi, \boldsymbol{\theta}) \in \boldsymbol{V}_h^{cr} \times Q_h^0 \times \mathcal{M}_h^{cr}$. Now an application of integration by parts implies that $(\boldsymbol{u}, p, \boldsymbol{y})$ satisfies the following equation:
	\begin{align}\label{aPriori_eq1}
		a^{h}(\boldsymbol{y}; \boldsymbol{u}, \boldsymbol{v}_h) + c^{h}(\boldsymbol{u}, \boldsymbol{u}, \boldsymbol{v}_h) + b^{h}(\boldsymbol{v}_h,p) -  d^h(\boldsymbol{y},\boldsymbol{v}_h) &= \sum_{K \in {\mathcal{T}_h}} \int_{\partial K} \nu(T) (\nabla \boldsymbol{u} \boldsymbol{n}_K) \cdot \boldsymbol{v}_h \; ds \;\;
		 \forall  \boldsymbol{v}_h \in \boldsymbol{V}^{cr_0}_h.
	\end{align}
	Writing a discrete analogue of \eqref{aPriori_eq1} and subtracting the result, yields the following:
	\begin{align}\label{aPriori_eq2}
		&a^{h}(\boldsymbol{y}; \boldsymbol{u}, \boldsymbol{v}_h) - a^{h}(\boldsymbol{y}_h; \boldsymbol{u}_h, \boldsymbol{v}_h) + c^{h}(\boldsymbol{u}, \boldsymbol{u}, \boldsymbol{v}_h) - c^{h}(\boldsymbol{u}_h, \boldsymbol{u}_h, \boldsymbol{v}_h) + b^{h}(\boldsymbol{v}_h,p - p_h)  \\
		&\hspace{4cm}- d^h(\boldsymbol{y},\boldsymbol{v}_h) + d^h(\boldsymbol{y}_h,\boldsymbol{v}_h) = \sum_{K \in {\mathcal{T}_h}} \int_{\partial K} \nu(T) (\nabla \boldsymbol{u}  \boldsymbol{n}_K) \cdot \boldsymbol{v}_h \; ds. \nonumber
	\end{align}
	Furthermore, we can obtain the following:
	\begin{align}
		&b^h(\boldsymbol{u} - \boldsymbol{u}_h, q_h) = 0, \label{aPriori_eq3}\\
		&a^h_{\boldsymbol{y}}(\boldsymbol{y},\boldsymbol{s}_h) - a^h_{\boldsymbol{y}}(\boldsymbol{y}_h,\boldsymbol{s}_h) + c^h_{\boldsymbol{y}}(\boldsymbol{u},\boldsymbol{y},\boldsymbol{s}_h) - c^h_{\boldsymbol{y}}(\boldsymbol{u}_h,\boldsymbol{y}_h,\boldsymbol{s}_h) =  \sum_{K \in {\mathcal{T}_h}} \int_{\partial K} \boldsymbol{D} (\nabla \boldsymbol{y} \boldsymbol{n}_K) \cdot \boldsymbol{s}_h \; ds. \label{aPriori_eq4}
	\end{align}
	Now, using the error decomposition in \eqref{aPriori_eq0}, testing \eqref{aPriori_eq2} with $\boldsymbol{v}_h = \tilde{e}_{\boldsymbol{u}}$ and \eqref{aPriori_eq4} with $\boldsymbol{s}_h = \tilde{e}_{\boldsymbol{y}}$ and rearranging the terms, we end up with the following error equations:
	\begin{align}
		a^h(\boldsymbol{y}; \tilde{e}_{\boldsymbol{u}}, \tilde{e}_{\boldsymbol{u}}) + c^h(\boldsymbol{u}_h, \tilde{e}_{\boldsymbol{u}}, \tilde{e}_{\boldsymbol{u}}) &= T_1 + T_2 + T_3 + T_4, \label{aPriori_eq5}\\
		a^h(\tilde{e}_{\boldsymbol{y}}, \tilde{e}_{\boldsymbol{y}}) + c^h_{\boldsymbol{y}}(\boldsymbol{u}_h, \tilde{e}_{\boldsymbol{y}},\tilde{e}_{\boldsymbol{y}}) &= I_1 + I_2 + I_3, \label{aPriori_eq6}
	\end{align}
	where
	\begin{align*}
		&T_1 := a^h(\boldsymbol{y}_h; \boldsymbol{u}_h, \tilde{e}_{\boldsymbol{u}}) -  a^h(\boldsymbol{y}; \boldsymbol{u}_h, \tilde{e}_{\boldsymbol{u}}), \\
		&T_2 := \left[c^h(\boldsymbol{\chi}, \boldsymbol{u},  \tilde{e}_{\boldsymbol{u}}) - c^h(\boldsymbol{u}, \boldsymbol{u},  \tilde{e}_{\boldsymbol{u}})\right] + \left[c^h(\boldsymbol{u}_h, \boldsymbol{u},  \tilde{e}_{\boldsymbol{u}}) - c^h(\boldsymbol{\chi}, \boldsymbol{u},  \tilde{e}_{\boldsymbol{u}})\right]  - c^h(\boldsymbol{u}_h; \hat{e}_{\boldsymbol{u}}, \tilde{e}_{\boldsymbol{u}}), \\
		&T_3 := d^h(\boldsymbol{y}_h, \tilde{e}_{\boldsymbol{u}}) - d^h(\boldsymbol{y},\tilde{e}_{\boldsymbol{u}}), \;\; T_4 := \sum_{K \in {\mathcal{T}_h}} \int_{\partial K} \nu(T)( \nabla \boldsymbol{u}  \boldsymbol{n}_K) \cdot \tilde{e}_{\boldsymbol{u}} \; ds, \\
		& I_1 := - a^h_{\boldsymbol{y}}(\hat{e}_{\boldsymbol{y}}, \tilde{e}_{\boldsymbol{y}}), \;\; I_2 := - c^h_{\boldsymbol{y}}(\hat{e}_{\boldsymbol{u}}, \boldsymbol{y}, \tilde{e}_{\boldsymbol{y}}) -  c^h_{\boldsymbol{y}}(\tilde{e}_{\boldsymbol{u}}, \boldsymbol{y}, \tilde{e}_{\boldsymbol{y}}) - c^h_{\boldsymbol{y}}(\boldsymbol{u}_h,  \hat{e}_{\boldsymbol{y}}, \tilde{e}_{\boldsymbol{y}}),\\
		& I_3 :=  \sum_{K \in {\mathcal{T}_h}} \int_{\partial K} \boldsymbol{D} (\nabla \boldsymbol{y} \boldsymbol{n}_K) \cdot \tilde{e}_{\boldsymbol{y}} \; ds.
	\end{align*}
	Now we focus our attention on finding appropriate bounds for these terms. Following steps analogous to \eqref{ahDifference},  \eqref{chDifference} and \eqref{dhDifference}, we can get the following bounds:
	\begin{align*}
		&|T_1| \leq \gamma_{\nu} \tilde{M} \norm{\boldsymbol{y} - \boldsymbol{y}_h}_{1,\mathcal{T}_h^{nc}} \norm{\tilde{e}_{\boldsymbol{u}}}_{1,\mathcal{T}_h^{nc}}, \\
		&|T_2| \leq C_e^{nc} M_{\boldsymbol{u}} (\norm{\hat{e}_{\boldsymbol{u}}}_{1,\mathcal{T}_h^{nc}} + \norm{\tilde{e}_{\boldsymbol{u}}}_{1,\mathcal{T}_h^{nc}}) \norm{\tilde{e}_{\boldsymbol{u}}}_{1,\mathcal{T}_h^{nc}} + C_{e}^{nc} M_{\boldsymbol{u}}^{nc} \norm{\hat{e}_{\boldsymbol{u}}}_{1,\mathcal{T}_h^{nc}} \norm{\tilde{e}_{\boldsymbol{u}}}_{1,\mathcal{T}_h^{nc}}, \\
		&|T_3| \leq C_F (\norm{\hat{e}_{\boldsymbol{y}}}_{1,\mathcal{T}_h^{nc}} + \norm{\tilde{e}_{\boldsymbol{y}}}_{1,\mathcal{T}_h^{nc}}) \norm{\tilde{e}_{\boldsymbol{u}}}_{1,\mathcal{T}_h^{nc}}, \\
		&|I_1| \leq \hat{C}^{nc}_a \norm{\hat{e}_{\boldsymbol{y}}}_{1,\mathcal{T}_h^{nc}} \norm{\tilde{e}_{\boldsymbol{y}}}_{1,\mathcal{T}_h^{nc}}, \\
		&|I_2| \leq C_e^{nc} M_{\boldsymbol{y}} (\norm{\hat{e}_{\boldsymbol{u}}}_{1,\mathcal{T}_h^{nc}} + \norm{\tilde{e}_{\boldsymbol{u}}}_{1,\mathcal{T}_h^{nc}}) \norm{\tilde{e}_{\boldsymbol{y}}}_{1,\mathcal{T}_h^{nc}} + C_e^{nc} M_{\boldsymbol{u}}^{nc} \norm{\hat{e}_{\boldsymbol{y}}}_{1,\mathcal{T}_h^{nc}} \norm{\tilde{e}_{\boldsymbol{y}}}_{1,\mathcal{T}_h^{nc}}.
	\end{align*}
	The consistency error terms $T_4$ and $I_3$ which arise due to the nonconforming approximation are bounded by using the edge projection $\boldsymbol{P}_e$ and \cite[(10.3.9)]{brennerBook}, as follows:
\begin{align*}
	|T_4|
	&= \left| \sum_{e \in \mathcal{E}(\mathcal{T}_h)} \int_{e} \nu(T)\,(\nabla \boldsymbol{u}\,\boldsymbol{n}_{e}) \cdot \big(\tilde{\boldsymbol{e}}_{\boldsymbol{u}}^{+}-\tilde{\boldsymbol{e}}_{\boldsymbol{u}}^{-}\big)\; ds\right|  
	= \left| \sum_{e \in \mathcal{E}(\mathcal{T}_h)} \int_{e} \nu(T)\,\left((\nabla \boldsymbol{u}\,\boldsymbol{n}_{e})
	-\boldsymbol{P}_e(\nabla \boldsymbol{u}\,\boldsymbol{n}_{e})\right)\cdot \left(\tilde{\boldsymbol{e}}_{\boldsymbol{u}}^{+}-\tilde{\boldsymbol{e}}_{\boldsymbol{u}}^{-}\right)\; ds\right| \\
	&\le \nu_2 \Big(\sum_{e \in \mathcal{E}(\mathcal{T}_h)}  |e|^{-1} 
	\big\|(\nabla \boldsymbol{u}\,\boldsymbol{n}_{e})
	-\boldsymbol{P}_e(\nabla \boldsymbol{u}\,\boldsymbol{n}_{e})\big\|^2_{0,e}\Big)^{1/2}
	\Big(\sum_{e \in \mathcal{E}(\mathcal{T}_h)} |e| \,
	\big\|\tilde{\boldsymbol{e}}_{\boldsymbol{u}}^{+}-\tilde{\boldsymbol{e}}_{\boldsymbol{u}}^{-}\big\|^2_{0,e}\Big)^{1/2} \\
	&\le C \nu_2 \Big(\sum_{e \in \mathcal{E}(\mathcal{T}_h)}  |e|^{-1} 
	\big\|(\nabla \boldsymbol{u}\,\boldsymbol{n}_{e})
	-\boldsymbol{P}_e(\nabla \boldsymbol{u}\,\boldsymbol{n}_{e})\big\|^2_{0,e}\Big)^{1/2}
	\Big(\sum_{K \in \mathcal{T}_h} h^2_K \,|\tilde{\boldsymbol{e}}_{\boldsymbol{u}}|^2_{1,K}\Big)^{1/2} \\
	&\le C \nu_2\, h \Big(\sum_{e \in \mathcal{E}(\mathcal{T}_h)}  |e|^{-1} 
	\big\|(\nabla \boldsymbol{u}\,\boldsymbol{n}_{e})
	-\boldsymbol{P}_e(\nabla \boldsymbol{u}\,\boldsymbol{n}_{e})\big\|^2_{0,e}\Big)^{1/2}
	\norm{\tilde{\boldsymbol{e}}_{\boldsymbol{u}}}_{1,\mathcal{T}_h^{nc}} .
	\\
	|I_3|
	&= \Big| \sum_{e \in \mathcal{E}(\mathcal{T}_h)} \int_{e} \boldsymbol{D}\,(\nabla \boldsymbol{y}\,\boldsymbol{n}_{e}) \cdot \big(\tilde{\boldsymbol{e}}_{\boldsymbol{y}}^{+}-\tilde{\boldsymbol{e}}_{\boldsymbol{y}}^{-}\big)\; ds\Big|  
  \le C\, \hat{C}_a\, h \Big(\sum_{e \in \mathcal{E}(\mathcal{T}_h)} |e|^{-1}\,
	\big\|(\nabla \boldsymbol{y}\,\boldsymbol{n}_{e})
	-\boldsymbol{P}_e(\nabla \boldsymbol{y}\,\boldsymbol{n}_{e})\big\|^2_{0,e}\Big)^{1/2}
	\norm{\tilde{\boldsymbol{e}}_{\boldsymbol{y}}}_{1,\mathcal{T}_h^{nc}} .
\end{align*}

	Inserting the bounds into \eqref{aPriori_eq5} and \eqref{aPriori_eq6} and applying \eqref{Positivitync:ch}, \eqref{Positivitync:chy} and \eqref{nc:Coercivity} yields
	\begin{align}
		\alpha_a^{nc} \norm{\tilde{e}_{\boldsymbol{u}}}_{1,\mathcal{T}_h^{nc}} &\leq  \gamma_{\nu} \tilde{M} (\norm{\hat{e}_{\boldsymbol{y}}}_{1,\mathcal{T}_h^{nc}} + \norm{\tilde{e}_{\boldsymbol{y}}}_{1,\mathcal{T}_h^{nc}}) + C_e^{nc} M_{\boldsymbol{u}} (\norm{\hat{e}_{\boldsymbol{u}}}_{1,\mathcal{T}_h^{nc}} + \norm{\tilde{e}_{\boldsymbol{u}}}_{1,\mathcal{T}_h^{nc}}) + C_{e}^{nc} M_{\boldsymbol{u}}^{nc} \norm{\hat{e}_{\boldsymbol{u}}}_{1,\mathcal{T}_h^{nc}} \nonumber\\ 
		&\quad + C_F (\norm{\hat{e}_{\boldsymbol{y}}}_{1,\mathcal{T}_h^{nc}} + \norm{\tilde{e}_{\boldsymbol{y}}}_{1,\mathcal{T}_h^{nc}}) + C \nu_2 h \Big(\sum_{e \in {\mathcal{E}_{\mathcal{T}_h}}}  |e|^{-1} \|{\nabla \boldsymbol{u}  \boldsymbol{n}_{e} - \boldsymbol{P}_e \left(\nabla \boldsymbol{u}  \boldsymbol{n}_{e}\right)\|}^2_{0,e}\Big)^{1/2}, \label{aPriori_eq7} \\
		\alpha_2^{nc} \norm{\tilde{e}_{\boldsymbol{y}}}_{1,\mathcal{T}_h^{nc}} &\leq \hat{C}^{nc}_a \norm{\hat{e}_{\boldsymbol{y}}}_{1,\mathcal{T}_h^{nc}} +  C_e^{nc} M_{\boldsymbol{y}} \big(\norm{\hat{e}_{\boldsymbol{u}}}_{1,\mathcal{T}_h^{nc}} + \norm{\tilde{e}_{\boldsymbol{u}}}_{1,\mathcal{T}_h^{nc}}\big) + C_e^{nc} M_{\boldsymbol{u}}^{nc} \norm{\hat{e}_{\boldsymbol{y}}}_{1,\mathcal{T}_h^{nc}} \nonumber\\
		&\quad + C \hat{C}_a h \Big(\sum_{e \in {\mathcal{E}_{\mathcal{T}_h}}} |e|^{-1} \; \|{\nabla \boldsymbol{y}  \boldsymbol{n}_{e} - \boldsymbol{P}_e \left(\nabla \boldsymbol{y} \boldsymbol{n}_{e}\right)\|}^2_{0,e}\Big)^{1/2}. \label{aPriori_eq8}
	\end{align}
	Substituting \eqref{aPriori_eq8} in \eqref{aPriori_eq7} gives
	\begin{align}
		&\Big(\alpha_a^{nc} - \frac{C_e^{nc} M^{nc}_{\boldsymbol{y}}}{\alpha_2^{nc}} \big(\gamma_{\nu} \tilde{M} + C_{F} \big) + C_e^{nc}  M_{\boldsymbol{u}}^{nc}\Big) \norm{\tilde{e}_{\boldsymbol{u}}}_{1,\mathcal{T}_h^{nc}} \nonumber\\&\leq C \Big[\norm{\hat{e}_{\boldsymbol{u}}}_{1,\mathcal{T}_h^{nc}} + \norm{\hat{e}_{\boldsymbol{y}}}_{1,\mathcal{T}_h^{nc}} + h \Big(\sum_{e \in {\mathcal{E}_{\mathcal{T}_h}}}  |e|^{-1} \|{\nabla \boldsymbol{u}  \boldsymbol{n}_{e} - \boldsymbol{P}_e \left(\nabla \boldsymbol{u}  \boldsymbol{n}_{e}\right)\|}^2_{0,e}\Big)^{1/2}  \nonumber\\
		&\qquad+ h \Big(\sum_{e \in {\mathcal{E}_{\mathcal{T}_h}}} |e|^{-1} \; \|{\nabla \boldsymbol{y}  \boldsymbol{n}_{e} - \boldsymbol{P}_e \left(\nabla \boldsymbol{y}  \boldsymbol{n}_{e}\right)\|}^2_{0,e}\Big)^{1/2} \Big]. \nonumber
	\end{align}
	Choosing $\boldsymbol{\chi} = \boldsymbol{\Pi}_{nc} \boldsymbol{u}, \boldsymbol{\theta} = \boldsymbol{\Pi}_{nc} \boldsymbol{y}$ and invoking the approximation properties given in \eqref{nc:approximation_properties} straightforwardly leads to the estimate \eqref{aPriori_uy}.  To derive the pressure estimate, we exploit the discrete inf-sup condition \eqref{discreteinfsup} and Lemma \ref{ncStateCty} as follows:
	\begin{align}\label{aPriori_eq9}
		\norm{\tilde{e}_p}_{0,\Omega} \leq \frac{1}{\tilde{\beta}} \sup_{\boldsymbol{v}_h \in \boldsymbol{V}_h \backslash \{\boldsymbol{0}\}} \left(\frac{b^h(\boldsymbol{v}_h, e_p)}{\norm{\boldsymbol{v}_h}_{1,\mathcal{T}_h^{nc}}} + \frac{b^h(\boldsymbol{v}_h, \hat{e}_p)}{\norm{\boldsymbol{v}_h}_{1,\mathcal{T}_h^{nc}}}\right) \leq \frac{1}{\tilde{\beta}} \sup_{\boldsymbol{v}_h \in \boldsymbol{V}_h \backslash \{\boldsymbol{0}\}} \frac{b^h(\boldsymbol{v}_h, e_p)}{\norm{\boldsymbol{v}_h}_{1,\mathcal{T}_h^{nc}}} + \frac{1}{\tilde{\beta}} \norm{\hat{e}_p}_{0,\Omega}.
	\end{align}
	For any $\boldsymbol{v}_h \in \boldsymbol{V}_h^{cr}$, \eqref{aPriori_eq2} implies
	\begin{align}
		b^h(\boldsymbol{v}_h, p_h - p) &= a^{h}(\boldsymbol{y}; \boldsymbol{u}, \boldsymbol{v}_h) - a^{h}(\boldsymbol{y}_h; \boldsymbol{u}_h, \boldsymbol{v}_h) + c^{h}(\boldsymbol{u}, \boldsymbol{u}, \boldsymbol{v}_h) - c^{h}(\boldsymbol{u}_h, \boldsymbol{u}_h, \boldsymbol{v}_h) \nonumber\\
		&\qquad - d^h(\boldsymbol{y},\boldsymbol{v}_h) + d^h(\boldsymbol{y}_h,\boldsymbol{v}_h) - \sum_{K \in {\mathcal{T}_h}} \int_{\partial K} \nu(T)( \nabla \boldsymbol{u}  \boldsymbol{n}_K) \cdot \boldsymbol{v}_h \; ds. 
	\end{align}
	Utilizing \eqref{ahDifference}, \eqref{chDifference}, \eqref{dhDifference} and analogous steps used to bound $|T_4|$, we get
	\begin{align}\label{aPriori_eq10}
		|b^h(\boldsymbol{v}_h, p_h-p)| &\leq \norm{\boldsymbol{v}_h}_{1,\mathcal{T}_h^{nc}} \Big( \gamma_{\nu} \tilde{M} \norm{e_{\boldsymbol{y}}}_{1,\mathcal{T}_h^{nc}}  + C_e^{nc} M_{\boldsymbol{u}}^{nc} \norm{e_{\boldsymbol{u}}}_{1,\mathcal{T}_h^{nc}}  + C_F \norm{e_{\boldsymbol{y}}}_{1,\mathcal{T}_h^{nc}} \nonumber\\
		&\quad  + C \nu_2 h \Big(\sum_{e \in {\mathcal{E}_{\mathcal{T}_h}}}  |e|^{-1} \|{\nabla \boldsymbol{u}  \boldsymbol{n}_{e} - \boldsymbol{P}_e \left(\nabla \boldsymbol{u} \boldsymbol{n}_{e}\right)\|}^2_{0,e}\Big)^{1/2} \Big).
	\end{align}
	Thus the estimate \eqref{aPriori_p} follows by using \eqref{aPriori_eq10} in \eqref{aPriori_eq9}, choosing $\psi = \Pi_h p$ and using \eqref{aPriori_uy} and \eqref{nc:approximation_properties}.
\end{proof}

	\begin{remark}\label{NA_State:convex.domains}
			For convex domains $\delta = 1/2$. Then using Theorem \ref{H2regularity} and following same steps as in the above proof, we obtain the following optimal error bounds on the discrete velocity and pressure,
			\begin{align*}
				\norm{\boldsymbol{u} - \boldsymbol{u}_h}_{1,\mathcal{T}_h^{nc}} + \norm{\boldsymbol{y} - \boldsymbol{y}_h}_{1,\mathcal{T}_h^{nc}} &\leq C h \big(\norm{\boldsymbol{u}}_{2,\Omega} + \norm{\boldsymbol{y}}_{2,\Omega} \big),  \\
				\norm{p - p_h}_{0,\Omega} &\leq C h \big(\norm{\boldsymbol{u}}_{2,\Omega} + \norm{\boldsymbol{y}}_{2,\Omega} + \norm{p}_{1,\Omega}\big). 
			\end{align*} 	
	\end{remark}
	
\begin{remark}[Pressure robustness]\label{rem:pressure-robustness}
		In the literature, pressure robustness is often understood in the sense of \cite{john2017divergence}, namely that the discrete velocity is invariant under the addition of gradient forces to the momentum equation. Equivalently, the discrete velocity depends only on the divergence-free part of the forcing in the Helmholtz--Hodge decomposition \cite{linke2014role}. It is shown that CR-DG pair does not automatically enjoy this invariance property, despite yielding locally divergence-free discrete velocities. This is because nonconformity leads to additional interface contributions and a reconstruction is required to recover full pressure robustness as shown in \cite{john2017divergence,linke2014role}. 
		
	    Since our error analysis is based on reduced model problem in which pressure is elimiated and recovered later, our velocity error estimates are independent of pressure variable. However, our scheme is not pressure robust in the sense of \cite{john2017divergence}, as is evident from velocity and pressure errors in the Darcy regime in Experiment \ref{exp:acc.convex}.
\end{remark}	

\section{Numerical Experiments}\label{NE} 
In this section we provide a set of examples which verify the convergence rates proved in  Theorems \ref{NA_State} (see Remark \ref{NA_State:convex.domains} for convex domains) and verify the proposed method on the porous-cavity benchmark problem by comparing with data from the literature. We employ the Newton's method to solve the discretised problem, resulting in a linearised system which is solved using the linear solver \textit{MUMPS} in open source finite element library FEniCS \cite{alnaes2015fenics}. The essential boundary conditions are implemented using the Dirichlet boundary class feature of FEniCS and the zero mean value for the pressure approximation is handled using a Lagrange multiplier approach.


\subsection{Accuracy test on convex domain $(\delta = 1/2)$}\label{exp:acc.convex} Let the computational domain be $\Omega := [0,1] \times [0,1]$ and consider a sequence of uniformly refined meshes on $\Omega$ with mesh parameter $h$.  We take the buoyancy term  of the form $\boldsymbol{F}(\boldsymbol{y}) = (T + N_{r} S) \boldsymbol{g},$ where $N_r$ is the solutal to the thermal buoyancy ratio, and the viscosity is chosen to be of exponential form, that is, $\nu(T) = \nu_2 \exp(-T)$. The other parameters are defined as follows: $\boldsymbol{g} = (0,1)^{\top}, \boldsymbol{K}^{-1} = \sigma \boldsymbol{I}, \boldsymbol{D} = 1000 \boldsymbol{I}.$ We use the following smooth functions as the closed form solutions of \eqref{P:GE} in our accuracy test:
\begin{align*}
	&\boldsymbol{u}(x,y) = \left(\sin(\pi x) \cos(\pi y), - \cos(\pi x) \sin(\pi y)\right)^{\top}, \quad  p(x,y) = \cos(\pi x) \exp(y), \\
	&T(x,y) = 0.5 + 0.5 \cos(xy), \quad S(x,y) = 0.1+0.3\exp(xy).
\end{align*}
The boundary conditions, additional external forces and source terms are prescribed according to the above prescribed manufactured solutions. The errors for all the variable in their respective discrete norms and the corresponding convergence rates are denoted as follows:
\begin{align*}
	&\boldsymbol{e}_{\boldsymbol{u}} = \norm{\boldsymbol{u} - \boldsymbol{u}_h}_{1,\mathcal{T}_h^{nc}},\quad \boldsymbol{e}_{p} = \norm{p - p_h}_{0,\Omega},\quad \boldsymbol{e}_{T} = \norm{T-T_h}_{1,\mathcal{T}_h^{nc}},\\
	&\boldsymbol{e}_{S} = \norm{S - S_h}_{1,\mathcal{T}_h^{nc}},\quad \boldsymbol{rate} = \log(\boldsymbol{e}_{(\cdot)} / \tilde{\boldsymbol{e}}_{(\cdot)}) (\log(h / \tilde{h}))^{-1},
\end{align*}
where $\boldsymbol{e}, \tilde{\boldsymbol{e}}$ denote the errors generated on two consecutive mesh refinements of mesh size $h$ and $\tilde{h},$ respectively. The errors and convergence rates  focusing on cases where the viscosity and permeability coefficients scale differently, changing from Flow to Darcy regimes are reported in Table \ref{table1}; where $\boldsymbol{Itr}$ denotes the number of iterations the \textit{Newton} algorithm takes to reach the absolute tolerance of $10^{-8}$ and $\boldsymbol{\chi}_{(\cdot)}$ denotes the degrees of freedom associated with the space $(\cdot)$. The tabulated errors confirm the theoretical bounds of Remark \ref{NA_State:convex.domains} and that the discrete velocity is divergence free. 
 
\begin{longtable}{@{}|l|l|l|l|l|l|l|l|l|l|l|l|l|@{}}
	
	\caption{Experimental errors and order of convergence in the discrete norms under flow, Stokes and Darcy regimes.} \label{table1}\\
	\hline
	\multicolumn{13}{|c|}{\textbf{Flow regime} ($\nu_2 = 1, \; \sigma = 0$)} \\ \hline
	$\boldsymbol{\chi}_{\boldsymbol{V}_h^{cr}}$ & $\boldsymbol{e}_{\boldsymbol{u}}$ & $\boldsymbol{rate}$ & $\|\bdiv_h \boldsymbol{u}_h\|_{\infty}$ & $\boldsymbol{\chi}_{\mathcal{M}_h^{cr}}$ & $\boldsymbol{e}_T$ & $\boldsymbol{rate}$ & $\boldsymbol{e}_S$ & $\boldsymbol{rate}$ & $\boldsymbol{\chi}_{Q_h^0}$ & $\boldsymbol{e}_p$ & $\boldsymbol{rate}$ & $\boldsymbol{Itr}$ \\ \hline
	112   & 0.71816  & -    & 4.44E-16 & 56    & 2.54079  & -    & 2.31212  & -    & 32    & 1.41664  & -    & 4 \\
	416   & 0.36977  & 0.96 & 1.33E-15 & 208   & 1.27796  & 1.00 & 1.16021  & 1.00 & 128   & 0.82215  & 0.78 & 4 \\
	1600  & 0.18633  & 0.99 & 2.66E-15 & 800   & 0.63784  & 1.00 & 0.58045  & 1.00 & 512   & 0.44617  & 0.88 & 4 \\
	6272  & 0.09334  & 1.00 & 5.33E-15 & 3136  & 0.31892  & 1.00 & 0.29045  & 1.00 & 2048  & 0.23265  & 0.94 & 4 \\
	24832 & 0.04665  & 1.00 & 1.07E-14 & 12416 & 0.15957  & 1.00 & 0.14539  & 1.00 & 8192  & 0.11800  & 0.98 & 4 \\
	98816 & 0.02332  & 1.00 & 2.13E-14 & 49408 & 0.07985  & 1.00 & 0.07269  & 1.00 & 32768 & 0.05931  & 1.00 & 4 \\ \hline
	
	\multicolumn{13}{|c|}{\textbf{Stokes regime} ($\nu_2 = 10, \; \sigma = 0$)} \\ \hline
	112   & 2.05517  & -    & 8.88E-16 & 56    & 3.94857  & -    & 6.50500  & -    & 32    & 1.47301  & -    & 6 \\
	416   & 1.10270  & 0.90 & 1.78E-15 & 208   & 2.00340  & 0.98 & 3.29944  & 0.98 & 128   & 0.86344  & 0.77 & 6 \\
	1600  & 0.56386  & 0.97 & 4.00E-15 & 800   & 1.00983  & 0.99 & 1.66376  & 0.99 & 512   & 0.45687  & 0.92 & 5 \\
	6272  & 0.28416  & 0.99 & 7.11E-15 & 3136  & 0.50703  & 0.99 & 0.83577  & 1.00 & 2048  & 0.23322  & 0.97 & 5 \\
	24832 & 0.14253  & 1.00 & 1.42E-14 & 12416 & 0.25405  & 1.00 & 0.41891  & 1.00 & 8192  & 0.11757  & 0.99 & 5 \\
	98816 & 0.07137  & 1.00 & 3.55E-14 & 49408 & 0.12716  & 1.00 & 0.20972  & 1.00 & 32768 & 0.05899  & 1.00 & 5 \\ \hline
	
	\multicolumn{13}{|c|}{\textbf{Darcy regime} ($\nu_2 = 1, \; \sigma = 10000$)} \\ \hline
	112   & 16.330   & -    & 1.33E-15 & 56    & 3.94857  & -    & 6.50501  & -    & 32    & 171.156  & -    & 4 \\
	416   & 7.9157   & 1.04 & 2.22E-15 & 208   & 2.00340  & 0.98 & 3.29944  & 0.98 & 128   & 99.6558  & 0.78 & 4 \\
	1600  & 3.84327  & 1.04 & 3.55E-15 & 800   & 1.00983  & 0.99 & 1.66377  & 0.99 & 512   & 52.5293  & 0.92 & 4 \\
	6272  & 1.88428  & 1.03 & 7.11E-15 & 3136  & 0.50703  & 0.99 & 0.83577  & 0.99 & 2048  & 26.7997  & 0.97 & 4 \\
	24832 & 0.93138  & 1.02 & 1.42E-14 & 12416 & 0.25405  & 1.00 & 0.41891  & 1.00 & 8192  & 13.5129  & 0.99 & 4 \\
	98816 & 0.46281  & 1.01 & 2.84E-14 & 49408 & 0.12716  & 1.00 & 0.20972  & 1.00 & 32768 & 6.78184  & 0.99 & 4 \\
	\hline
\end{longtable}

\begin{remark}
	In the Darcy regime to handle the reaction domination we modify our scheme by adding the following penalty in the discrete bilinear form $a^h(\cdot; \cdot, \cdot)$:
	$\sum_{e \in \mathcal{E}(\mathcal{T}_h)} \int_{e}\frac{a_0}{h_e} \nu_2 \jump{\boldsymbol{u}_h} : \jump{\boldsymbol{v}_h} \; ds,$
	with the stabilization parameter $a_0 = 10 \sqrt{\sigma}$ and calculate the error in the modified natural norm
	$$\norm{\boldsymbol{v}}_{1,\mathcal{T}_h^{nc}} =  \sum_{K \in {\mathcal{T}_h}} \sigma \norm{\boldsymbol{v}}^2_{0,K} + \nu_2 \norm{\nabla \boldsymbol{v}}^2_{0,K} + \sum_{e \in \mathcal{E}(\mathcal{T}_h)} \frac{1}{h_e} \|\jump{\boldsymbol{v}}\|^2_{0,e}, \; \forall \; \boldsymbol{v} \in \boldsymbol{H}^1({\mathcal{T}_h}).$$
\end{remark}

	\subsection{Accuracy test on non-convex domain $(\delta < 1/2)$}\label{exp:L}  Consider a sequence of uniformly refined meshes discretizing the L-shaped domain $\Omega = (-1,1)^2 \backslash [0,1]^2$. We take same data and parameters as in  Experiment \ref{exp:acc.convex}. We use the following closed form solutions of \eqref{P:GE} for this test,
	
	\begin{align*}
		&\boldsymbol{u}(x,y) = \left(\sin(\pi x) \cos(\pi y), - \cos(\pi x) \sin(\pi y)\right)^{\top}, \quad  p(x,y) = r^{2/3} \cos\left(\frac{2}{3} \theta\right), \\
		&T(x,y) = 0.5 + 0.5  \left(r^{2/3} \cos\left(\frac{2}{3} \theta\right)\right) , \quad S(x,y) = 0.1+0.3\left( r^{2/3} \sin\left(\frac{2}{3} \theta\right)\right).
	\end{align*}
	
	The regularity of solutions $T, S$ and $p$ is $H^{5/6 - \epsilon}(\Omega) (\epsilon > 0)$. So, we expect the convergence rate of $\mathcal{O}(h^{1/2 + 1/6})$ in the $\| \cdot \|_{1,\mathcal{T}_h^{nc}}$-norm for $T$ and $S$, which we observe for all regimes in Table \ref{table:Lshape} for $T$ and $S$, which verifies the results of Theorem \ref{NA_State}. The convergence rate for $p$ in the $L^2$-norm is still optimal which is expected given the regularity of the manufactured $p$.      

\begin{longtable}{@{}|l|l|l|l|l|l|l|l|l|l|l|l|l|@{}}
	
	\caption{Experimental errors and order of convergence in the discrete norms for Experiment \ref{exp:L} under flow, Stokes and Darcy regimes.}\label{table:Lshape}\\
	\hline
	\multicolumn{13}{|c|}{\textbf{Flow regime} ($\nu_2 = 1,\; \sigma = 0$)} \\ \hline
	$\boldsymbol{\chi}_{\boldsymbol{V}_h^{cr}}$ & $\boldsymbol{e}_{\boldsymbol{u}}$ & $\boldsymbol{rate}$ &
	$\|\bdiv_h \boldsymbol{u}_h\|_{\infty}$ &
	$\boldsymbol{\chi}_{\mathcal{M}_h^{cr}}$ & $\boldsymbol{e}_T$ & $\boldsymbol{rate}$ &
	$\boldsymbol{e}_S$ & $\boldsymbol{rate}$ &
	$\boldsymbol{\chi}_{Q_h^0}$ & $\boldsymbol{e}_p$ & $\boldsymbol{rate}$ & $\boldsymbol{Itr}$ \\ \hline
	608    & 1.271  & -    & 1.33E-15 & 304    & 8.543 & -    & 6.793 & -    & 192    & 0.2827  & -    & 7 \\
	2368   & 0.6453 & 0.98 & 2.89E-15 & 1184   & 5.461 & 0.65 & 4.331 & 0.65 & 768    & 0.1516  & 0.90 & 6 \\
	9344   & 0.3247 & 0.99 & 7.11E-15 & 4672   & 3.473 & 0.65 & 2.746 & 0.66 & 3072   & 0.0781 & 0.96 & 6 \\
	37120  & 0.1628 & 1.00 & 1.42E-14 & 18560  & 2.200 & 0.66 & 1.736 & 0.66 & 12288  & 0.0396 & 0.98 & 6 \\
	147968 & 0.0815 & 1.00 & 2.84E-14 & 73984  & 1.391 & 0.66 & 1.096 & 0.66 & 49152  & 0.0199 & 0.99 & 6 \\
	\hline
	
	\multicolumn{13}{|c|}{\textbf{Stokes regime} ($\nu_2 = 10,\; \sigma = 0$)} \\ \hline
	608    & 4.041  & -    & 1.33E-15 & 304    & 8.543 & -    & 6.793 & -    & 192    & 3.0293  & -    & 6 \\
	2368   & 2.045  & 0.98 & 3.11E-15 & 1184   & 5.461 & 0.65 & 4.331 & 0.65 & 768    & 1.5602  & 0.96 & 6 \\
	9344   & 1.026  & 0.99 & 6.66E-15 & 4672   & 3.473 & 0.65 & 2.746 & 0.66 & 3072   & 0.7868   & 0.99 & 6 \\
	37120  & 0.5140 & 1.00 & 1.42E-14 & 18560  & 2.200 & 0.66 & 1.736 & 0.66 & 12288  & 0.3945 & 1.00 & 6 \\
	147968 & 0.2571 & 1.00 & 2.84E-14 & 73984  & 1.391 & 0.66 & 1.096 & 0.66 & 49152  & 0.1974 & 1.00 & 6 \\
	\hline
	
	\multicolumn{13}{|c|}{\textbf{Darcy regime} ($\nu_2 = 1,\; \sigma = 10000$)} \\ \hline
	160     & 24.00  & -    & 8.88E-16 & 80      & 13.27  & -    & 10.53  & -    & 48      & 95.924     & -    & 4 \\
	608     & 5.619  & 2.09 & 1.78E-15 & 304     & 8.543  & 0.64 & 6.793  & 0.63 & 192     & 17.204    & 2.48 & 4 \\
	2368    & 1.530  & 1.88 & 3.11E-15 & 1184    & 5.461  & 0.65 & 4.331  & 0.65 & 768     & 2.4061   & 2.84 & 4 \\
	9344    & 0.5185 & 1.56 & 7.11E-15 & 4672    & 3.473 & 0.65 & 2.746  & 0.66 & 3072    & 0.3643    & 2.72 & 4 \\
	37120   & 0.2068 & 1.33 & 1.42E-14 & 18560   & 2.200  & 0.66 & 1.736  & 0.66 & 12288   & 0.0829  & 2.14 & 4 \\
	147968  & 0.09150& 1.18 & 2.84E-14 & 73984   & 1.391  & 0.66 & 1.096  & 0.66 & 49152   & 0.0317  & 1.39 & 4 \\
	590848  & 0.04291& 1.09 & 5.68E-14 & 295424  & 0.878 & 0.66 & 0.691 & 0.66 & 19660  & 0.0149  & 1.08 & 4 \\
	2361344 & 0.02075& 1.05 & 1.42E-13 & 1180672 & 0.554 & 0.66 & 0.436 & 0.67 & 78643  & 0.0074 & 1.02 & 4 \\
	\hline
	
\end{longtable}

	\subsection{Soret and Dufour effects in a porous cavity benchmark} 

We consider the porous-cavity benchmark problem on $\Omega=(0,1)^2$ in the Darcy regime
(see \cite[Example 2]{burger2019h} and the references \cite{MR2999596,goyeau1996numerical}). The physical
parameters are fixed as $R_k=1, Da=10^{-7}, Le=10, Pr=10, Sr=0, Du=0, N=0$,
with $\mathbf{g}=(0,-1)^\top$, $\nu(T)=1$, and $\mathbf{K}=Da\,\mathbf{I}$. The buoyancy forcing is given by
$\mathbf{F}(\boldsymbol{y})=(Gr_T\,T+Gr_C\,C)\mathbf{g}$, where $Gr_T=Ra/(Pr\,Da)$ and $Gr_C=N\,Gr_T$.
The temperature and concentration are coupled through the diffusion operator
$$
-\nabla\cdot\big(\mathbf{D}\nabla \boldsymbol{y}\big),\qquad \boldsymbol{y}=(T,C)^\top,
\qquad 
\mathbf{D}=
\begin{pmatrix}
	\frac{R_k}{Pr} & Du \\
	Sr & \frac{1}{Sc}
\end{pmatrix},
\qquad Sc=Le\,Pr.
$$
On the left and right walls $\Gamma_L$ and $\Gamma_R$ we prescribe Dirichlet data
$T=1,\ C=1 \ \text{on }\Gamma_L,  T=0,\ C=0 \ \text{on }\Gamma_R$,
while the top and bottom walls $\Gamma_T$ and $\Gamma_B$ are impermeable and adiabatic.
The wall-averaged Nusselt and Sherwood numbers on $\Gamma_L$ are defined by
\begin{align*}
	Nu = \int_0^1 \left.\frac{\partial T}{\partial x}\right|_{x=0}\,dy,
	\qquad
	Sh = \int_0^1 \left.\frac{\partial C}{\partial x}\right|_{x=0}\,dy.
\end{align*}
In our implementation these are evaluated via boundary integration. Since the outward unit normal on
$\Gamma_L$ is $\mathbf{n}=(-1,0)^\top$, we compute
\[
Nu=\int_{\Gamma_L}-\nabla T_h\cdot \mathbf{n}\,ds,\qquad
Sh=\int_{\Gamma_L}-\nabla C_h\cdot \mathbf{n}\,ds,
\]
and we report $|Nu|$ and $|Sh|$ for different values of the Rayliegh number $Ra$ in Tables \ref{table:Nu} and \ref{table:Sh}, respectively. 

\begin{longtable}{|l|l|l|l|l|l|}
	\caption{Nusselt numbers on $\Gamma_L$ for the proposed scheme in the Darcy regime on $150\times 150$ structured grid.}
	\label{table:Nu}\\
	\hline
	\multicolumn{6}{|c|}{\textbf{Darcy regime}} \\ \hline
	$Ra$ &
	$|Nu|$ & $\boldsymbol{Itr}$ &
	$Nu$ \cite{burger2019h} &
	$Nu$ \cite{MR2999596} &
	$Nu$ \cite{goyeau1996numerical} \\
	\hline
	100  & 3.0992  & 7  & 3.10 & 3.15 & 3.11 \\
	200  & 4.9256  & 7  & 4.97 & 5.02 & 4.96 \\
	400  & 7.6641  & 9  & 7.84 & 7.83 & 7.77 \\
	1000 & 13.0682 & 12 & 13.72 & 14.01 & 13.47 \\
	\hline
\end{longtable}

\begin{longtable}{|l|l|l|l|l|}
	\caption{Sherwood numbers on $\Gamma_L$ for the proposed scheme in the Darcy regime on $150\times 150$ structured grid.}
	\label{table:Sh}\\
	\hline
	\multicolumn{5}{|c|}{\textbf{Darcy regime}} \\ \hline
	$Ra$ &
	$|Sh|$ &
	$Sh$ \cite{burger2019h} &
	$Sh$ \cite{MR2999596} &
	$Sh$ \cite{goyeau1996numerical} \\
	\hline
	100  & 13.2135 & 13.58 & 13.54 & 13.25 \\
	200  & 19.4361 & 20.73 & 20.11 & 19.86 \\
	400  & 27.7142 & 30.91 & 27.96 & 28.41 \\
	1000 & 42.1791 & 49.42 & 48.01 & 48.32 \\
	\hline
\end{longtable}

The proposed scheme converges robustly for $Ra\le 1000$ using $7$--$12$ Newton 
iterations to reach the relative tolerance of $10^{-9}$. The computed $|Nu|$ agrees well with the benchmark values at Rayleigh numbers $Ra\leq1000$.
For $|Sh|$, the discrepancy increases with $Ra$, which we suspect is due to development of thin boundary layers that are challenging to resolve using lowest-order discretizations on uniform grids.

\section*{Acknowledgements}
The authors greatly acknowledge the funding from SERB-CRG India (Grant Number : CRG/2021/002569). The first author also gratefully acknowledges helpful discussions with Dr. Kush Kinra and thanks Indian Institute of Technology Roorkee, where majority of this works was done.

\addcontentsline{toc}{section}{References}
\bibliographystyle{ieeetr} 
\bibliography{References}
.\end{document}